\newtheorem{satz}{Theorem}
\newtheorem{proposition}[satz]{Proposition}
\newtheorem{theorem}[satz]{Theorem}
\newtheorem{lemma}[satz]{Lemma}
\newtheorem{corollary}[satz]{Corollary}
\newtheorem{remark}[satz]{Remark}
\newtheorem{example}[satz]{Example}
\def\no{\noindent}
\def\sbeq{\subseteq}
\def\T{\mathsf{T}}
\def\N{\mathbb {N}}
\def\Z{\mathbb {Z}}
\def\F{\mathbb {F}}
\def\E{\mathsf{E}}
\def\e{\varepsilon}
\def\l{\lambda}
\def\a{\alpha}
\def\h{\widehat}
\def\d{\delta}
\def\({\big (}
\def\){\big )}
\def\g{\gamma}
\def\ls{\leqslant}
\def\dim{{\rm dim}}
\def\le{\leqslant}
\def\ge{\geqslant}
\def\_phi{\varphi}
\def\eps{\varepsilon}
\def\Gr{{\mathbf G}}
\def\FF{\widehat}
\def\k{\kappa}
\def\ov{\overline}
\def\t{\tilde}
\def\Span{{\rm Span\,}}
\def\f{{\mathbb F}}
\def\c{\circ}
\begin{document}

\title{\bf Additive dimension and a theorem of Sanders}

\author{ By\\  \\{\sc Tomasz Schoen\footnote{ The author is supported by NCN 2012/07/B/ST1/03556.} ~ and ~
Ilya D.~Shkredov\footnote{
This work was supported by grant RFFI NN
11-01-00759, Russian Government project 11.G34.31.0053,
Federal Program "Scientific and scientific--pedagogical staff of innovative Russia" 2009--2013,
grant mol\underline{ }a\underline{ }ved 12--01--33080
and
grant Leading Scientific Schools N 2519.2012.1.}}
}
\date{}

\maketitle

\begin{abstract}
    We prove some new bounds for the size of the maximal dissociated subset
    of
    structured (having small sumset, large energy and so on) subsets $A$ of an abelian group.
\end{abstract}

\bigskip

\section{Introduction}

Let $\Gr$ be an abelian group.
A finite set $\Lambda \sbeq \Gr$
is called {\it dissociated} if any equality of the form
$$\sum_{\l \in \Lambda}\e_{\l}\l=0$$
for $\e_{\l}\in \{-1,0,1\}$ implies $\e_{\l}=0$ for all $\l.$
In many problems of additive combinatorics
(see e.g. \cite{BK_AP3,BK_struct,Bourgain_AP3,Bourgain_AP3_new,c1,sanders-log,Sanders_0.75,Sanders_1})
it is important to control
the maximal (by cardinality) dissociated subset of $A$,
which we  call the (additive) {\it dimension} of the set $A$.
The first general theorem on dimension of so--called large spectrum of a set was obtained in \cite{c1}.
For further results see
\cite{Bourgain_AP3_new,BourgainA+A,sanders-shkredov,sanders-log,sy}.
Let us recall two theorems proved in  \cite{sanders-log} and \cite{sy}, respectively, which we will apply later.

\begin{theorem}
    Let $A,B\subseteq \Gr$ be finite sets and
    suppose that $|A+B| \le K|A|$.
    Then $\dim (B) \ll K \log |A|$.
\label{t:Sanders}
\end{theorem}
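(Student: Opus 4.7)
The plan is to play a Pl\"unnecke--Ruzsa upper bound on iterated sumsets off against a lower bound coming directly from dissociation, and then to optimise over the iteration parameter.

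First, from $|A+B|\le K|A|$ the Pl\"unnecke--Ruzsa inequality gives $|kB|\le K^k|A|$ for every nonnegative integer $k$; this is the $m=0$ case of the standard estimate $|kB-mB|\le K^{k+m}|A|$. Next, let $\Lambda\sbeq B$ be a maximal dissociated subset and write $d=\dim(B)=|\Lambda|$. By dissociation the subset sums $\sigma(S):=\sum_{\l\in S}\l$ for $S\sbeq\Lambda$ are pairwise distinct; in particular the $\binom{d}{k}$ sums with $|S|=k$ are pairwise distinct, and each of them lies in $kB$ since $\Lambda\sbeq B$. Combining the two inputs,
$$
\binom{d}{k}\;\le\;|kB|\;\le\;K^k|A|\qquad\text{for every }0\le k\le d.
$$

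The elementary bound $\binom{d}{k}\ge(d/k)^k$ (valid for $d\ge k$, and immediate from the term-by-term comparison $\prod_{i=0}^{k-1}(1-i/d)\ge\prod_{i=0}^{k-1}(1-i/k)$) rearranges the previous inequality to $d\le kK|A|^{1/k}$. Choosing $k=\lceil\log|A|\rceil$ forces $|A|^{1/k}\le e$, and this delivers the required $d\ll K\log|A|$ (the case $d<\lceil\log|A|\rceil$ being already trivial as $K\ge 1$).

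There is no serious obstacle in this approach: the argument is essentially a one-step optimisation in the parameter $k$. The one point that deserves a moment's care is the decision to invoke Pl\"unnecke--Ruzsa in the asymmetric form $|kB|\le K^k|A|$, with $|A|$ on the right rather than $|B|$. It is precisely this asymmetry that produces the factor $\log|A|$ in the final bound rather than $\log|B|$ or $\log(K|A|)$.
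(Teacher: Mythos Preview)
Your argument is correct and is essentially the same as the paper's own proof (the first assertion of Theorem~\ref{t:dim}): combine a Pl\"unnecke--Ruzsa upper bound $|kB|\le K^k|A|$ with a dissociation lower bound on $|k\Lambda|$, then optimise at $k\sim\log|A|$. The paper phrases the lower bound as $|\Lambda|^k/(Ck)^k\le|k\Lambda|$, quoting Rudin's inequality ``or simple counting arguments''; your direct counting of the $\binom{d}{k}$ distinct $k$-subset sums is exactly that simple counting argument.
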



\begin{theorem}\label{t:shkredov-yekhanin}
    Let $A,B \subseteq \Gr$ be finite sets and
    suppose that $\E (A,B) \ge |A| |B|^2 /K.$ Then there exist a set $B_1 \subseteq B$
    such that $\dim (B_1) \ll K \log |A|$
    and
    \begin{equation}\label{f:E(A,B_1)}
        \E (A,B_1) \ge 2^{-5} \E(A,B).
    \end{equation}
    In particular, $|B_1| \ge 2^{-3} K^{-1/2} |B|$.
    If $B=A$ then $\E(B_1) \ge 2^{-10}\E(A)$ and, consequently, $|B_1| \ge 2^{-4} K^{-1/3} |A|$.
\label{t:E(A,B)}
\end{theorem}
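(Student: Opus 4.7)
The plan is to extract a subset $B_1 \subseteq B$ to which Theorem~\ref{t:Sanders} can be applied essentially verbatim. Concretely, I aim to produce $B_1$ satisfying both $\mathsf{E}(A,B_1) \geq 2^{-5}\mathsf{E}(A,B)$ and $\mathsf{E}(A,B_1) \geq 2^{-5}|A||B_1|^2/K$; the latter is the same ``normalized'' energy density as the hypothesis and, combined with the Cauchy--Schwarz bound $\mathsf{E}(A,B_1)\cdot|A+B_1| \geq |A|^2|B_1|^2$, forces $|A+B_1| \ll K|A|$. Theorem~\ref{t:Sanders} then yields $\dim(B_1) \ll K\log|A|$. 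The lower bound $|B_1| \geq 2^{-3}K^{-1/2}|B|$ follows at once from $\mathsf{E}(A,B_1) \leq |A||B_1|^2$ and the first inequality above.

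To build such a $B_1$, I would track the local contributions $\mu(b) := \sum_{b'\in B} r_{A-A}(b'-b)$, which satisfy $\sum_{b\in B}\mu(b) = \mathsf{E}(A,B)$, and run an iterative cleaning: starting with $B^{(0)} = B$, at each stage delete from $B^{(i)}$ any element whose local energy computed \emph{inside} $B^{(i)}$ falls below a fixed small constant times the current mean $\mathsf{E}(A,B^{(i)})/|B^{(i)}|$. Each deletion removes at most $\mathsf{E}(A,B)/|B|$ of total energy, so the process halts before a constant fraction of $\mathsf{E}(A,B)$ is lost, and by construction every surviving $b$ contributes at least a constant fraction of the running mean, which is exactly the inequality $\mathsf{E}(A,B_1) \gtrsim |A||B_1|^2/K$ required. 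For the special case $B=A$, the stronger assertion $\mathsf{E}(B_1) \geq 2^{-10}\mathsf{E}(A)$ is obtained by running the analogous cleaning on the first coordinate of $\mathsf{E}(A,B_1)$; exploiting the symmetry of $\mathsf{E}(\cdot,\cdot)$, the two procedures can be coupled when the initial sets coincide, so the resulting $A_1$ equals $B_1$. The cubic bound $|B_1| \geq 2^{-4}K^{-1/3}|A|$ then follows from $\mathsf{E}(B_1) \leq |B_1|^3$.

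The main obstacle is keeping the dependence on $K$ linear in the extraction step. A naive invocation of Balog--Szemer\'edi--Gowers would give only $|A+B_1| \leq K^{O(1)}|A|$, and thus through Theorem~\ref{t:Sanders} only $\dim(B_1) \ll K^{O(1)}\log|A|$, losing the linear dependence claimed; BSG must therefore be avoided. The iterative cleaning described above sidesteps it, but the delicate point is that the threshold must be a fraction of the \emph{current} mean rather than the original mean, so that the final invariant $\mathsf{E}(A,B_1) \gtrsim |A||B_1|^2/K$ holds with the same $K$ as in the hypothesis. Verifying rigorously that the two invariants (retaining a constant fraction of the energy, and maintaining energy density at least $1/K$) can be enforced simultaneously is the heart of the argument.
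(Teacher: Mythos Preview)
Your overall strategy --- clean $B$ down to a subset $B_1$ with $\E(A,B_1)\gg |A||B_1|^2/K$, deduce $|A+B_1|\ll K|A|$ via Cauchy--Schwarz, and then invoke Theorem~\ref{t:Sanders} --- is sound and is genuinely different from what the paper does. The paper does not prove this theorem (it is quoted from \cite{sy}); the nearest thing is the proof of the refinement Theorem~\ref{t:E(A,B)'} (and of Proposition~\ref{pred:T_k} for $k=2$), which is entirely Fourier--analytic: one applies Lemma~\ref{l:Bourgain_diss_sets} to split $B=B^{\rm str}\cup B^{\rm diss}$ with $\dim(B^{\rm str})<l$, and uses Rudin's inequality to show that $B^{\rm diss}$ contributes negligibly to $\E(A,B)$ once $l\sim K\log|A|$, so $B_1=B^{\rm str}$ already works. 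Your route avoids Fourier analysis beyond what is packaged inside Theorem~\ref{t:Sanders}, at the cost of using that theorem as a black box; the Fourier route, on the other hand, gives the dimension bound directly and generalises to $\T_k$.

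However, the cleaning as you describe it does not work, and your diagnosis in the last paragraph is backwards. With the \emph{current}-mean threshold $c\,\E(A,B^{(i)})/|B^{(i)}|$ one can check that the density $\E(A,B^{(i)})/|B^{(i)}|^2$ is non-decreasing, so the second invariant holds, but nothing prevents the procedure from shrinking $B^{(i)}$ to a single element and losing almost all of $\E(A,B)$: your claim ``each deletion removes at most $\E(A,B)/|B|$'' is simply false for this threshold. The correct choice is the \emph{fixed} threshold $c\,|A||B|/K$. Then each deletion removes at most $2c\,|A||B|/K$ from the energy, and over at most $|B|$ deletions the total loss is at most $2c\,|A||B|^2/K\le 2c\,\E(A,B)$, giving \eqref{f:E(A,B_1)}; at termination every surviving $b$ satisfies $\sum_{b'\in B_1}(A\circ A)(b-b')\ge c\,|A||B|/K\ge c\,|A||B_1|/K$, whence $\E(A,B_1)\ge c\,|A||B_1|^2/K$ and $|A+B_1|\le c^{-1}K|A|$. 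Finally, no ``coupling'' is needed in the symmetric case: Cauchy--Schwarz on $\sum_x (A\circ A)(x)(B_1\circ B_1)(x)$ gives $\E(A,B_1)\le \E(A)^{1/2}\E(B_1)^{1/2}$, so $\E(B_1)\ge (2^{-5}\E(A))^2/\E(A)=2^{-10}\E(A)$ directly.
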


The aim of this paper is to obtain some further estimates on dimensions of sets.
First of all, we give a simple combinatorial proof of Theorem \ref{t:Sanders}
and refine the result in the case of
small doubling constant $K$, see Theorem \ref{t:dim}.
Furthermore, we generalize Theorem \ref{t:E(A,B)} for the case of another sorts of energies
and improve it by finding  subset
 having even smaller additive dimension
(see Theorems \ref{t:small-dim-subset}, \ref{t:sy_improved'}).
In section 6 we present an application of our results.
In the last section we reformulate some results from the papers \cite{BK_AP3}, \cite{BK_struct}
in terms of the additive dimensions and prove them for general abelian groups.

The Polynomial Freiman--Ruzsa Conjecture (PFRC) roughly states (see for details \cite{GT_PFRC=U_k}) that
every set $A$ with $|A+A| \le K|A|$ contains  highly structured subset of size $|A|/K^{O(1)}$.


If PFRC holds then for every set $A$ with $|A+A| \le K|A|$
there is a subset $B \subseteq A$, $|B| \gg K^{-C} |A|$ with $\dim (A) \ll K^{o(1)} \log |A|$, as $K\rightarrow \infty.$
Our results provide  bounds  of the form $\dim (A) \ll K^{c} \log |A|$,
where $c>0$ are some constants, which are much weaker than one could expect from PFRC.
However, our theorems  are still applicable
because in our results
size of the set $B$ is large and explicit, which is crucial, for example,
in problems concerning sets without solutions to a linear equation, see
e.g.
\cite{BK_AP3,sanders-log}.

I.D.S. is grateful of S.V. Konyagin for very fruitful discussions.

\section{Notation}

Let $\Gr$ be an abelian group.
If $\Gr$ is finite then denote by $N$ the cardinality of $\Gr$.
It is well--known~\cite{Rudin_book} that the dual group $\FF{\Gr}$ is isomorphic to $\Gr$ in the case.
Let $f$ be a function from $\Gr$ to $\mathbb{C}.$  We denote the Fourier transform of $f$ by~$\FF{f},$
\begin{equation}\label{F:Fourier}
  \FF{f}(\xi) =  \sum_{x \in \Gr} f(x) e( -\xi \cdot x) \,,
\end{equation}
where $e(x) = e^{2\pi i x}$.
We rely on the following basic identities
\begin{equation}\label{F_Par}
    \sum_{x\in \Gr} |f(x)|^2
        =
            \frac{1}{N} \sum_{\xi \in \FF{\Gr}} \big|\widehat{f} (\xi)\big|^2 \,.
\end{equation}
\begin{equation}\label{svertka}
    \sum_{y\in \Gr} \Big|\sum_{x\in \Gr} f(x) g(y-x) \Big|^2
        = \frac{1}{N} \sum_{\xi \in \FF{\Gr}} \big|\widehat{f} (\xi)\big|^2 \big|\widehat{g} (\xi)\big|^2 \,.
\end{equation}
and
\begin{equation}\label{f:inverse}
    f(x) = \frac{1}{N} \sum_{\xi \in \FF{\Gr}} \FF{f}(\xi) e(\xi \cdot x) \,.
\end{equation}
If
$$
    (f*g) (x) := \sum_{y\in \Gr} f(y) g(x-y) \quad \mbox{ and } \quad (f\circ g) (x) := \sum_{y\in \Gr} f(y) g(y+x)
$$
 then
\begin{equation}\label{f:F_svertka}
    \FF{f*g} = \FF{f} \FF{g} \quad \mbox{ and } \quad \FF{f \circ g} = \FF{f}^c \FF{g} = \ov{\FF{\ov{f}}} \FF{g} \,,
\end{equation}
where for a function $f:\Gr \to \mathbb{C}$ we put $f^c (x):= f(-x)$.
 Clearly,  $(f*g) (x) = (g*f) (x)$ and $(f\c g)(x) = (g \c f) (-x)$, $x\in \Gr$.
 The $k$--fold convolution, $k\in \N$  we denote by $*_k$,
 so $*_k := *(*_{k-1})$.

The characteristic function of a set
$S\subseteq \Gr$ we denote by $S(x).$
Write $\E(A,B)$ for the {\it additive energy} of sets $A,B \subseteq \Gr$
(see e.g. \cite{tv}), that is
$$
    \E(A,B) = |\{ a_1+b_1 = a_2+b_2 ~:~ a_1,a_2 \in A,\, b_1,b_2 \in B \}| \,.
$$
If $A=B$ we simply write $\E(A)$ instead of $\E(A,A).$
Clearly,
\begin{equation}\label{f:energy_convolution}
    \E(A,B) = \sum_x (A*B) (x)^2 = \sum_x (A \circ B) (x)^2 = \sum_x (A \circ A) (x) (B \circ B) (x)
    \,,
\end{equation}
and by (\ref{svertka}),
\begin{equation}\label{f:energy_Fourier}
    \E(A,B) = \frac{1}{N} \sum_{\xi} |\FF{A} (\xi)|^2 |\FF{B} (\xi)|^2 \,.
\end{equation}
Let
$$
   \T_k (A) := | \{ a_1 + \dots + a_k = a'_1 + \dots + a'_k  ~:~ a_1, \dots, a_k, a'_1,\dots,a'_k \in A \} | \,.
$$
Clearly, $\T_k (A) = \frac{1}{N}  \sum_{\xi} |\FF{A} (\xi)|^{2k}$.
For $A_1,\dots,A_{2k} \subseteq \Gr$ let
$$
    \T_k (A_1,\dots,A_{2k}) := | \{ a_1 + \dots + a_k = a_{k+1} + \dots + a_{2k}  ~:~ a_i \in A_i,\, i \in [2k] \} | \,.
$$
Put also
$$
    \sigma_k (A) := (A*_k A)(0)=| \{ a_1 + \dots + a_k = 0 ~:~ a_1, \dots, a_k \in A \} | \,.
$$
Notice that for a symmetric set $A$ that is $A=-A$ one has $\sigma_2(A) = |A|$ and $\sigma_{2k} (A) = \T_k (A)$.

 For a sequence $s=(s_1,\dots, s_{k-1})$ put
$A^B_s = B \cap (A-s_1)\dots \cap (A-s_{k-1}).$
If $B=A$ then write $A_s$ for $A^A_s$.
 Let
\begin{equation}\label{f:E_k_preliminalies}
    \E_k(A)=\sum_{x\in \Gr} (A\c A)(x)^k = \sum_{s_1,\dots,s_{k-1} \in \Gr} |A_s|^2
\end{equation}
and
\begin{equation}\label{f:E_k_preliminalies_B}
\E_k(A,B)=\sum_{x\in \Gr} (A\c A)(x) (B\c B)(x)^{k-1} = \sum_{s_1,\dots,s_{k-1} \in \Gr} |B^A_s|^2
\end{equation}
be the higher energies of $A$ and $B$.
The second formulas in (\ref{f:E_k_preliminalies}), (\ref{f:E_k_preliminalies_B})
can be considered as the definitions of $\E_k(A)$, $\E_k(A,B)$ for non integer $k$, $k\ge 1$.


\bigskip

For a positive integer $n,$ we set $[n]=\{1,\ldots,n\}$.
All logarithms used in the paper are to base $2.$
Signs  $\ll$ and $\gg$ are the usual Vinogradov symbols.
If $p$ is a prime number then write $\F_p$ for $\Z/p\Z$ and
$\F^*_p$ for $(\Z/p\Z) \setminus \{ 0 \}$.


\section{Preliminaries}

In this section we recall some  results, which we will need in the paper.


First of all, it was proved by Rudin \cite{Rudin_book} that all norms of Fourier transform
of a function with support on a dissociated set are equivalent.

\begin{lemma}\label{l:rudin}Let $\Lambda\sbeq \Z_N$ be a dissociated set and let $a_n$ be any
complex numbers. Then for each $p\ge 2$ we have
$$\frac1N\sum_{x=0}^{N-1}|\sum_{n\in \Lambda}a_n e^{-2\pi inx/N}|^p
\le (Cp)^{p/2}\Big(\sum_{n\in \Lambda}|a_n|^2\Big)^{p/2},$$ for some
absolute constant $C.$
\label{l:Rudin_L_p_L_2}
\end{lemma}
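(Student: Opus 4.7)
The plan is to derive the inequality from a sub-Gaussian moment generating function estimate for $f(x) := \sum_{n\in\Lambda} a_n e^{-2\pi i nx/N}$, and then integrate the resulting tail bound. Writing $g(x) := \mathrm{Re}\, f(x)$, the goal reduces to showing
\[
\frac{1}{N}\sum_{x=0}^{N-1} e^{t g(x)} \le \exp\!\bigl(\tfrac{1}{2} t^2 \|a\|_2^2\bigr),\qquad t\in \mathbb{R},
\]
where $\|a\|_2^2 = \sum_n |a_n|^2$. The analogous bound for $\mathrm{Im}\, f$ follows by applying the estimate to $-if$, and then $\|f\|_p \le \|\mathrm{Re}\, f\|_p + \|\mathrm{Im}\, f\|_p$ via the pointwise inequality $|f|\le |\mathrm{Re}\, f|+|\mathrm{Im}\, f|$ reduces everything to the real-valued case.

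The MGF estimate is proved by a Riesz-product argument. Writing $a_n = \rho_n e^{i\phi_n}$, we have $g(x) = \sum_n \rho_n \cos(2\pi nx/N - \phi_n)$, and the elementary inequality $e^{\rho\cos\theta} \le \cosh(\rho) + \sinh(\rho)\cos\theta$, valid for all real $\rho,\theta$ since the difference is concave in $\cos\theta$ and vanishes at $\cos\theta = \pm 1$, yields
\[
e^{t g(x)} \le \prod_{n\in\Lambda}\bigl(\cosh(t\rho_n) + \sinh(t\rho_n)\cos(2\pi n x/N - \phi_n)\bigr).
\]
Expanding the product gives a sum of terms, each proportional to a character $e^{2\pi i x(\sum_{n\in S}\varepsilon_n n)/N}$ for some $S\subseteq\Lambda$ and some sign pattern $\varepsilon\colon S\to\{-1,+1\}$. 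The dissociation of $\Lambda$ forces $\sum_{n\in S}\varepsilon_n n\ne 0$ in $\Z_N$ whenever $S\ne\emptyset$, so these nontrivial characters average to zero over $x$; only the empty-set contribution $\prod_n\cosh(t\rho_n) \le \exp(\tfrac{1}{2} t^2\|a\|_2^2)$ survives.

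With the MGF bound in hand, Markov's inequality gives the sub-Gaussian tail $\tfrac{1}{N}|\{x : |g(x)| > \lambda\}| \le 2\exp(-\lambda^2/(2\|a\|_2^2))$, and the identity $\|g\|_p^p = p\int_0^\infty \lambda^{p-1} \tfrac{1}{N}|\{|g|>\lambda\}|\, d\lambda$ combined with a standard Gaussian integral yields $\|g\|_p^p \le (Cp)^{p/2}\|a\|_2^p$, which is the desired conclusion (the constant is then absorbed in passing back to $f$). The main obstacle is the Riesz-product step: one needs the pointwise inequality $e^{\rho\cos\theta}\le \cosh(\rho)+\sinh(\rho)\cos\theta$ to dominate $e^{tg}$ by a controlled trigonometric polynomial, and one needs dissociation precisely to annihilate all nontrivial characters in its Fourier expansion. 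This is the only place where the hypothesis on $\Lambda$ is used, and it delivers the optimal $p^{1/2}$ constant matching the classical Khintchine inequality for Rademacher sums.
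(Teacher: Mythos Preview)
Your proof is correct. The paper itself does not prove this lemma; it simply attributes the result to Rudin and cites \cite{Rudin_book}. Your argument is precisely the classical Riesz-product proof: the pointwise bound $e^{\rho\cos\theta}\le \cosh\rho+\sinh\rho\,\cos\theta$ (valid since $e^{\rho u}$ is convex in $u\in[-1,1]$ and agrees with the linear interpolant at the endpoints), the expansion of the resulting product into characters $e^{2\pi i x(\sum_{n\in S}\varepsilon_n n)/N}$, and the use of dissociation to annihilate all terms with $S\neq\emptyset$ upon averaging are exactly the standard steps. The remaining passage from the sub-Gaussian MGF bound to the $L^p$ estimate via the tail integral is routine and gives the sharp dependence $(Cp)^{1/2}$, matching Khintchine's inequality as you note. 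There is nothing to correct.
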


A consequence of the above lemma
is the following result due  of Sanders~\cite{sanders-shkredov}.
(Similar results
 were obtained by Bourgain~\cite{BourgainA+A} and by the second author~\cite{Sh_doubling}.)

\begin{lemma}
    Let $\Gr$ be a finite abelian group, $Q\subseteq \Gr$ be a set and $l$ be a positive integer.
    There is a partition $Q=Q^{str} \cup Q^{diss}$
    such that  $\dim (Q^{str}) < l$ and $Q^{diss}$ is a union of dissociated sets of sizes $l$.
    Moreover for all $p\ge 2$
    the following holds
    \begin{equation}\label{}
        \Big( \frac{1}{N} \sum_\xi | \FF{Q}^{diss} (\xi) |^p \Big)^{1/p}
            \ll
                \sqrt{p/l} \cdot |Q| \,.
    \end{equation}
\label{l:Bourgain_diss_sets}
\end{lemma}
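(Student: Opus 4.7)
The plan is to obtain the partition greedily and then bound the $L^p$ norm of $\FF{Q^{diss}}$ by combining Rudin's inequality (Lemma~\ref{l:rudin}) applied to each dissociated piece with Minkowski's inequality.

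First I would build $Q^{diss}$ iteratively. Set $Q_0 := Q$; as long as $\dim(Q_i) \ge l$ there is, by the very definition of additive dimension, a dissociated subset $\Lambda_{i+1} \sbeq Q_i$ of size exactly $l$ (take any $l$ elements of a maximal dissociated subset of $Q_i$; any subset of a dissociated set is dissociated). Remove it and set $Q_{i+1} := Q_i \s \Lambda_{i+1}$. The procedure terminates at some step $m \le |Q|/l$ with $\dim(Q_m) < l$. Taking $Q^{str} := Q_m$ and $Q^{diss} := \Lambda_1 \cup \cdots \cup \Lambda_m$, a pairwise disjoint union of dissociated sets of size $l$, gives the required decomposition of $Q$.

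Next, applying Lemma~\ref{l:rudin} to each $\Lambda_i$ with all coefficients equal to $1$ gives, for every $p \ge 2$,
$$
\Big( \frac{1}{N}\sum_\xi |\FF{\Lambda_i}(\xi)|^p \Big)^{1/p} \le \sqrt{Cpl}\,.
$$
Since the $\Lambda_i$ are pairwise disjoint, $Q^{diss}(x) = \sum_{i=1}^{m} \Lambda_i(x)$ as functions on $\Gr$, hence $\FF{Q^{diss}} = \sum_{i=1}^{m} \FF{\Lambda_i}$. Minkowski's inequality in the normalized $L^p$ space on $\FF{\Gr}$ then yields
$$
\Big( \frac{1}{N}\sum_\xi |\FF{Q^{diss}}(\xi)|^p \Big)^{1/p} \le \sum_{i=1}^{m} \Big( \frac{1}{N}\sum_\xi |\FF{\Lambda_i}(\xi)|^p \Big)^{1/p} \le m\sqrt{Cpl} \le \frac{|Q|}{l}\cdot \sqrt{Cpl} = \sqrt{Cp/l}\cdot |Q|\,,
$$
which is the advertised estimate.

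No serious obstacle arises; the lemma is essentially the greedy construction packaged with Rudin's inequality. The only observation required is that additive dimension being $\ge l$ is equivalent to the existence of a dissociated subset of size $l$, so the loop step is always available, and the rest is a clean application of Lemma~\ref{l:rudin} followed by the triangle inequality in $L^p$.
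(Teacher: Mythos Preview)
Your argument is correct and is exactly the standard proof of this lemma. Note that the paper itself does not include a proof of Lemma~\ref{l:Bourgain_diss_sets}; it is quoted from Sanders~\cite{sanders-shkredov} (with antecedents in \cite{BourgainA+A,Sh_doubling}), and the proof there is precisely the greedy extraction of dissociated $l$-sets followed by Rudin's inequality and Minkowski, just as you wrote. One cosmetic remark: Lemma~\ref{l:rudin} is stated in the paper only for $\Z_N$, whereas Lemma~\ref{l:Bourgain_diss_sets} is for an arbitrary finite abelian group, so strictly speaking you are invoking the general form of Rudin's inequality (which of course holds, see~\cite{Rudin_book}); this is harmless but worth a word if you want the write-up to be self-contained.
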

We will also make use of a known  Chang's Covering Lemma \cite{c1} and
Pl\"{u}nnecke--Ruzsa inequality, see \cite{Ruzsa_book}, \cite{Ruzsa_card} or \cite{tv}.

\begin{lemma}\label{l:chang-covering}
    Let $L,K$ be real numbers, and $A,B \subseteq \Gr$ be two sets.
 If $|A+A|\le K|A|$ and $|A+B|\le L|B|$ then there are sets $S_1,\dots,S_l$ each of size at most $2K$ such
that $A\sbeq B-B+(S_1-S_1)+\dots +(S_l-S_l)$ and $l\le \log(2KL).$
\end{lemma}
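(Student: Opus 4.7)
My plan is a two-stage covering argument. In Stage~1, I use the hypothesis $|A+B|\le L|B|$ together with Ruzsa's covering lemma to extract a small subset $T\sbeq A$ covering $A$ modulo $B-B$: let $T\sbeq A$ be maximal such that the translates $\{t+B : t\in T\}$ are pairwise disjoint. Since their disjoint union lies inside $A+B$, we have $|T|\cdot|B|=|T+B|\le|A+B|\le L|B|$, so $|T|\le L$, and maximality of $T$ gives $A\sbeq T+(B-B)$. Thus it suffices to express $T$ in the form $T\sbeq x+(S_1-S_1)+\cdots+(S_m-S_m)$, since the trailing shift $x$ can be absorbed into one final $S=\{0,x\}$ of size $2\le 2K$.

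In Stage~2 I would prove, by induction on $|U|$, the following halving claim: for every finite $U\sbeq A$ with $|U|\ge 2$ there exist $S\sbeq A$ with $|S|\le 2K$ and $U'\sbeq A$ with $|U'|\le|U|/2$ such that $U\sbeq U'+(S-S)$. Iterating this claim starting from $U_0:=T$ yields a chain $T=U_0\supseteq U_1\supseteq\cdots$ with $|U_j|\le L/2^j$ and sets $S_1,\ldots,S_l\sbeq A$ of size $\le 2K$ satisfying $U_{j-1}\sbeq U_j+(S_j-S_j)$. After $\lceil\log L\rceil$ steps the residual drops to a single point, which is absorbed by one extra set $\{0,x\}$ of size $2$; combining with Stage~1 then gives $A\sbeq(B-B)+(S_1-S_1)+\cdots+(S_l-S_l)$ with $l\le\log(2KL)$, using $K\ge 1$.

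To produce the halving claim I would apply Ruzsa's covering \emph{inside} $A$: since $|U+A|\le|A+A|\le K|A|$, there is $V\sbeq U$ with $|V|\le K$ such that $U\sbeq V+(A-A)$. This partitions $U$ into at most $K$ classes $U_v=U\cap(v+(A-A))$. One then augments $V$ to a set $S\sbeq A$ of size at most $2K$ chosen so that a translate of $S-S$ captures a constant fraction of $U$, yielding a residual $U'\sbeq A$ with $|U'|\le|U|/2$ and $U\sbeq U'+(S-S)$.

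The main obstacle is this halving step. The Ruzsa covering naturally produces the factor $A-A$, which is not of the admissible form $S-S$ with $|S|\le 2K$; the combinatorial core of Chang's lemma is precisely to replace that $A-A$ factor by a single $(S-S)$ at the cost of only halving the residual. I expect this to rely on a careful pigeonhole over the classes $U_v$ combined with Plünnecke--Ruzsa bounds on iterated sumsets of subsets of $A$ (such as $|U+mA-nA|\ll K^{m+n}|A|$) to guarantee simultaneously $|S|\le 2K$ and $|U'|\le|U|/2$.
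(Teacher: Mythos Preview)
First, note that the paper does not prove this lemma; it is quoted in the Preliminaries as a known result (Chang's Covering Lemma, with a citation to \cite{c1}) and used as a black box, so there is no in-paper proof to compare against.

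Your proposal is not a proof but an outline with an explicitly acknowledged gap. Stage~1 is correct and standard. Stage~2, however, rests entirely on the ``halving claim'', which you do not establish. Your sketch---apply Ruzsa covering inside $A$ to obtain $U\subseteq V+(A-A)$ with $|V|\le K$, then ``augment $V$ to a set $S\subseteq A$ of size at most $2K$'' so that a single translate of $S-S$ captures half of $U$---supplies no mechanism for why such an $S$ should exist. The only structural input at that point is $U\subseteq A$ and $|A+A|\le K|A|$, and neither pigeonholing on the classes $U_v=U\cap(v+(A-A))$ nor Pl\"{u}nnecke--Ruzsa bounds of the shape $|U+mA-nA|\ll K^{m+n}|A|$ produces, in any evident way, a small $S$ whose symmetric difference set absorbs a fixed fraction of $U$ from one centre. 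You yourself call this step ``the combinatorial core of Chang's lemma'', and that diagnosis is accurate: the halving claim is of comparable strength to the lemma you are trying to prove, so the difficulty has been relocated rather than resolved. For reference, the usual argument does not separate into your two stages at all; one iterates directly on $B$, setting $B_0=B$ and at each step taking $S_i\subseteq A$ maximal with the translates $\{s+B_{i-1}:s\in S_i\}$ pairwise disjoint, then $B_i=S_i+B_{i-1}$. The covering $A\subseteq S_i+(B_{i-1}-B_{i-1})$ unfolds inductively into the desired form, and the sizes $|S_i|$ together with the number of iterations are controlled simultaneously through the doubling hypothesis on $A$, without ever isolating an intermediate set $T$ of size $L$ to be ``halved''.
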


\begin{lemma}\label{l:plunnecke-ruzsa}
Let $j<k$ be positive integers.
Let also $A,B$ be finite set of an abelian group such that $|A+jB|\le K|A|.$
Then there is a nonempty set $X\sbeq A$ such that
\begin{equation}\label{f:plunnecke-ruzsa1}
    |X+kB|\le K^{k/j}|X| \,.
\end{equation}
In particular, if $|A+A| \le K|A|$ then
\begin{equation}\label{f:plunnecke-ruzsa2}
    |mA-nA| \le K^{n+m} |A|
\end{equation}
for all $n,m\in \N$. Furthermore, for
fixed $j\ge 1$ and arbitrary $0<\d<1$ there exists   $X\subseteq A$ such that
$|X| \ge (1-\d) |A|$ and
\begin{equation}\label{f:plunnecke-ruzsa1'}
    |X+kB|\le (K/\d)^{k}  |X| \,.
\end{equation}
\end{lemma}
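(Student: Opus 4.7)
The plan is to prove the three assertions in turn via Petridis's argument, a Ruzsa-type inequality, and a greedy iteration. For \eqref{f:plunnecke-ruzsa1} with $j=1$, I pick a nonempty $X\subseteq A$ minimizing $K^{*}:=|X+B|/|X|$ (so $K^{*}\le K$) and establish Petridis's inequality $|X+B+C|\le K^{*}|X+C|$ for every finite $C$ by induction on $|C|$: splitting off $c\in C$ and setting $Y:=\{x\in X:x+c\in X+(C\setminus\{c\})\}$, one checks that $Y+B+c$ lies in the intersection $(X+B+(C\setminus\{c\}))\cap(X+B+c)$ and has size at least $K^{*}|Y|$ by the minimality of $K^{*}$; feeding this into inclusion--exclusion together with the inductive hypothesis closes the step. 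Iterating with $C=(k-1)B,\dots,B$ yields $|X+kB|\le(K^{*})^{k}|X|\le K^{k}|X|$. For general $j$, the exponent $k/j$ is the output of the Pl\"unnecke graph's magnification inequality on the layered graph with $V_i=A+iB$; see \cite{Ruzsa_book, tv}.

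For \eqref{f:plunnecke-ruzsa2}, apply \eqref{f:plunnecke-ruzsa1} with $B=A$ to obtain $X\subseteq A$ with $|X+kA|\le K^{k}|X|$ for every $k$, and combine with the Ruzsa-type inequality $|X|\cdot|Y+Z|\le|X-Y|\cdot|X+Z|$ taken with $Y=-mA$, $Z=nA$:
\[
|X|\cdot|mA-nA|\le|X+mA|\cdot|X+nA|\le K^{m+n}|X|^{2},
\]
so $|mA-nA|\le K^{m+n}|A|$ after dividing by $|X|$.

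For \eqref{f:plunnecke-ruzsa1'}, iterate \eqref{f:plunnecke-ruzsa1} greedily. Set $B_0:=A$; while $|B_i|\ge\delta|A|$, note $|B_i+jB|\le|A+jB|\le K|A|\le(K/\delta)|B_i|$, so \eqref{f:plunnecke-ruzsa1} supplies a nonempty $X_{i+1}\subseteq B_i$ with $|X_{i+1}+kB|\le(K/\delta)^{k/j}|X_{i+1}|\le(K/\delta)^{k}|X_{i+1}|$; put $B_{i+1}:=B_i\setminus X_{i+1}$. Once $|B_{\mathrm{last}}|<\delta|A|$, take $X:=A\setminus B_{\mathrm{last}}$, the disjoint union of the $X_{i+1}$. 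Then $|X|\ge(1-\delta)|A|$, and summing the bounds over the pieces yields $|X+kB|\le(K/\delta)^{k}|X|$.

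The main obstacle is the sharp form of \eqref{f:plunnecke-ruzsa1}: the correct choice of $Y$ in Petridis's induction is delicate, and harvesting the exponent $k/j$ rather than merely $k$ when $j>1$ requires the full Pl\"unnecke graph machinery or an adroit bootstrap. Once \eqref{f:plunnecke-ruzsa1} is granted, parts \eqref{f:plunnecke-ruzsa2} and \eqref{f:plunnecke-ruzsa1'} reduce to a single Ruzsa-type manipulation and a greedy peeling argument, respectively.
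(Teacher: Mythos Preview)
The paper does not prove this lemma at all: it is stated as a preliminary result and attributed to the references \cite{Ruzsa_book}, \cite{Ruzsa_card}, \cite{tv}. Your proposal is a correct and standard modern account --- Petridis's induction for the $j=1$ case of \eqref{f:plunnecke-ruzsa1}, a reference to the Pl\"unnecke graph machinery for general $j$, Ruzsa's triangle inequality for \eqref{f:plunnecke-ruzsa2}, and the usual greedy peeling for \eqref{f:plunnecke-ruzsa1'} --- so there is nothing to compare against beyond noting that your argument is exactly the one found in the cited sources.
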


 We will also make use of   some results concerning  higher additive energies (see \cite{schoen-shkredov-higher} and \cite{sv}).

\begin{lemma}\label{l:E_k-identity} Let $A$ be a subset of an abelian group.
Then for every $k,l\in \N$
$$\sum_{s,t:\atop \|s\|=k-1,\, \|t\|=l-1} \E(A_s,A_t)=\E_{k+l}(A) \,,$$
where $\|x\|$ denote the number of components of vector $x$.
\end{lemma}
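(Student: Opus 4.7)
The plan is to expand the energy $\E(A_s,A_t)$ on the left side using the convolution identity from (\ref{f:energy_convolution}), interchange the order of summation, and then observe that the sums over $s$ and $t$ factor cleanly because the defining conditions of $A_s$ are completely decoupled across coordinates.

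More precisely, first I would write, by (\ref{f:energy_convolution}),
\[
\E(A_s,A_t) = \sum_{u\in \Gr} (A_s\circ A_s)(u)\,(A_t\circ A_t)(u).
\]
Substituting this into the left hand side and exchanging the order of summation gives
\[
\sum_{s,t} \E(A_s,A_t) = \sum_{u\in \Gr} \Bigl(\sum_{\|s\|=k-1} (A_s\circ A_s)(u)\Bigr)\Bigl(\sum_{\|t\|=l-1} (A_t\circ A_t)(u)\Bigr),
\]
so the problem reduces to evaluating $\sum_{\|s\|=k-1}(A_s\circ A_s)(u)$.

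For this inner sum, I would unfold the definition $A_s(y) = A(y)\prod_{i=1}^{k-1} A(y+s_i)$ to obtain
\[
(A_s\circ A_s)(u) = \sum_{y\in \Gr} A(y)A(y+u)\prod_{i=1}^{k-1} A(y+s_i)A(y+u+s_i).
\]
Summing over $s_1,\dots,s_{k-1}\in \Gr$ independently, each coordinate $s_i$ contributes
\[
\sum_{s_i\in \Gr} A(y+s_i)A(y+u+s_i) = (A\circ A)(u),
\]
which is independent of $y$ (after the shift $z=y+s_i$). Hence
\[
\sum_{\|s\|=k-1}(A_s\circ A_s)(u) = (A\circ A)(u)^{k-1}\sum_{y\in \Gr} A(y)A(y+u) = (A\circ A)(u)^k.
\]

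Plugging this (and its analogue for $t$) back in yields
\[
\sum_{s,t} \E(A_s,A_t) = \sum_{u\in \Gr}(A\circ A)(u)^{k+l} = \E_{k+l}(A),
\]
the last equality being the definition (\ref{f:E_k_preliminalies}). There is no real obstacle here: the identity is essentially a Fubini argument once one recognizes that the constraints defining $A_s$ factor over the components $s_i$, so that the sum over each $s_i$ collapses to a single copy of $(A\circ A)(u)$.
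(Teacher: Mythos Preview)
Your proof is correct; the identity is exactly the Fubini/factoring argument you describe, and every step checks out. Note, however, that the paper does not actually supply its own proof of this lemma --- it is quoted as a known result from \cite{schoen-shkredov-higher} and \cite{sv} --- so there is nothing to compare against beyond observing that your argument is the standard one found in those references.
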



\begin{theorem}
    Let $A$ be a finite subset of an abelian group.
    Suppose that $\E(A)=|A|^3/K,$ and $\E_{3+\eps} = M|A|^{4+\eps}/K^{2+\eps}$, where $\eps\in (0,1]$.
    Put $P := \{ x : (A\circ A)(x) \ge |A|/2K \}$.
    Then $|P| \gg K|A| / M^{2/(1+\eps)}$ and  $\E(P) \gg M^{-\beta} |P|^3$, where $\beta = (3+4\eps) / (\eps(1+\eps))$.
\label{t:P_energy}
\end{theorem}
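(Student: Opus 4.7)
The plan is to prove both conclusions via H\"older and Cauchy--Schwarz inequalities leveraging the hypothesis on $\E_{3+\eps}(A)$.

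For the lower bound on $|P|$: split $\E(A) = \sum_x(A\circ A)(x)^2$ according to whether $x \in P$. Off $P$, $(A\circ A)(x) < |A|/(2K)$, so $\sum_{x \notin P}(A\circ A)^2 < (|A|/(2K))\sum(A\circ A) = |A|^3/(2K) = \E(A)/2$, hence $\sum_{x \in P}(A\circ A)^2 \ge \E(A)/2$. H\"older with exponents $(3+\eps)/2$ and $(3+\eps)/(1+\eps)$ then gives
$$\frac{|A|^3}{2K} \le |P|^{(1+\eps)/(3+\eps)}\,\E_{3+\eps}(A)^{2/(3+\eps)};$$
substituting $\E_{3+\eps}(A) = M|A|^{4+\eps}/K^{2+\eps}$ and raising to the $(3+\eps)/(1+\eps)$-th power yields $|P| \gg K|A|/M^{2/(1+\eps)}$.

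For the lower bound on $\E(P)$, the strategy is to sandwich $\E(A,P) = \sum_x(A\circ A)(x)(P\circ P)(x)$ between a popularity-based lower bound and a H\"older upper bound. For the former, set $f(x) := \sum_{p\in P} A(x+p)$; popularity gives $\sum_x f(x)A(x) = \sum_{p \in P}(A\circ A)(p) \ge |P||A|/(2K)$, while the identity $\sum_x f(x)^2 A(x) = \sum_{p_1, p_2 \in P}|A \cap (A-p_1) \cap (A-p_2)|$ combined with $|A \cap (A-p_1) \cap (A-p_2)| \le (A\circ A)(p_1 - p_2)$ yields $\sum_x f^2 A \le \E(A, P)$; Cauchy--Schwarz $\bigl(\sum f A\bigr)^2 \le |A|\sum f^2 A$ then gives $\E(A, P) \ge |P|^2|A|/(4K^2)$. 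For the latter, H\"older with exponents $3+\eps$ and $(3+\eps)/(2+\eps)$ together with the interpolation $\sum (P\circ P)^r \le |P|^{2(2-r)}\E(P)^{r-1}$ for $r = (3+\eps)/(2+\eps) \in (1,2]$ (itself a Jensen-type H\"older) produces
$$\E(A,P) \le \E_{3+\eps}(A)^{1/(3+\eps)}\,|P|^{2(1+\eps)/(3+\eps)}\,\E(P)^{1/(3+\eps)}.$$
Combining these two bounds and isolating $\E(P)$ gives $\E(P) \gg |P|^4/(K^{4+\eps}\,M\,|A|)$.

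The remaining task---converting this to the form $|P|^3/M^\beta$ with $\beta = (3+4\eps)/(\eps(1+\eps)) = 3/\eps + 1/(1+\eps)$---is the hardest step. One first substitutes a single factor of $|P|$ using $|P| \gg K|A|/M^{2/(1+\eps)}$, which absorbs one power of $K$ at the cost of an $M^{2/(1+\eps)}$ factor, yielding an intermediate bound of the form $\E(P) \gg |P|^3/(K^{3+\eps}\,M^{(3+\eps)/(1+\eps)})$. The residual $K^{3+\eps}$ must then be eliminated: the trivial bound $\E_{3+\eps}(A) \ge (A\circ A)(0)^{3+\eps} = |A|^{3+\eps}$ gives $K^{2+\eps} \le M|A|$, and combining this with the lower bound on $|P|$ to eliminate $|A|$ provides the needed estimate on $K^{3+\eps}$. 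The decomposition $\beta = 2/(1+\eps) + (3+\eps-\eps^2)/(\eps(1+\eps))$ makes transparent how the final exponent splits between the popularity--based information (contributing $2/(1+\eps)$) and the trivial $K$--bound (contributing the remainder). Executing this algebraic balancing with the correct constants is the main technical difficulty in the proof.
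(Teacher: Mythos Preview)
First, note that the paper itself does not prove this theorem: it is quoted in the Preliminaries as a known result from \cite{schoen-shkredov-higher} and \cite{sv}, so there is no in--paper proof to compare against. I will therefore assess your argument on its own merits.

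Your treatment of the bound $|P|\gg K|A|/M^{2/(1+\eps)}$ is correct and standard, as are your lower bound $\E(A,P)\ge |P|^2|A|/(4K^2)$ (via Cauchy--Schwarz on $\sum fA$) and your H\"older upper bound $\E(A,P)\le\E_{3+\eps}(A)^{1/(3+\eps)}|P|^{2(1+\eps)/(3+\eps)}\E(P)^{1/(3+\eps)}$. Together these do yield $\E(P)\gg|P|^4/(K^{4+\eps}M|A|)$.

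The gap is the final conversion, which you defer as ``the main technical difficulty''. If one follows literally what you outline---first replace $|A|$ by $\ll M^{2/(1+\eps)}|P|/K$ to reach $\E(P)\gg|P|^3/(K^{3+\eps}M^{(3+\eps)/(1+\eps)})$, then combine $K^{2+\eps}\le M|A|$ with the same upper bound on $|A|$ to control $K^{3+\eps}$---one obtains only $K^{3+\eps}\ll M^{(3+\eps)/(1+\eps)}|P|$, and hence merely
\[
\E(P)\ \gg\ \frac{|P|^{2}}{M^{2(3+\eps)/(1+\eps)}}\,,
\]
with exponent $2$ on $|P|$ rather than $3$. (Your displayed decomposition of $\beta$ is also arithmetically off: $2/(1+\eps)+(3+\eps-\eps^2)/(\eps(1+\eps))=(3+3\eps-\eps^2)/(\eps(1+\eps))\neq\beta$.) The underlying obstruction is that the two inputs you invoke, $K^{2+\eps}\le M|A|$ and $K|A|\ll M^{2/(1+\eps)}|P|$, are simply not strong enough to force $K^{3+\eps}\ll M^{\beta-(3+\eps)/(1+\eps)}$; for instance with $\eps=1$ this would require $K^4\ll M^{3/2}$, which certainly fails when $M$ is bounded and $K$ is large. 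So the ``algebraic balancing'' is not just tedious bookkeeping---it does not close with the ingredients you have assembled, and the route through the single H\"older bound on $\E(A,P)$ appears genuinely too weak to reach the stated exponent $\beta$. A different argument, as in the cited references, is needed for the energy bound on $P$.
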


\begin{theorem}
    Let $A \subseteq \Gr$ be a finite set, and $l\ge 2$ be a positive integer.
    Then
    \begin{equation}\label{f:E_k_and_T_k_difference}
        \left( \frac{|A|^8}{8 \E_3(A)} \right)^l \le \T_l (A) |A-A|^{2l+1} \,,
    \end{equation}
\label{p:E_k_and_T_k}
\end{theorem}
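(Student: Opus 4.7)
The plan is to pass to a ``rich'' level set of $A\circ A$, estimate its size via $\E_3(A)$, and then lower bound $\T_l(A)$ by restricting the convolution identity for $\T_l$ to this set.

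First I would define the threshold $\Delta := |A|^2 / (2|A-A|)$ (half the average value of $A\circ A$ on its support), and the level set $P := \{s \in \Gr : (A\circ A)(s) \ge \Delta\}$. A standard Markov-type argument gives
$$\sum_{s \in P}(A\circ A)(s) \;\ge\; |A|^2 - \Delta \cdot |A-A| \;=\; |A|^2/2.$$
Then, applying H\"older's inequality (with exponent $3$) to this sum yields
$$\left(\tfrac{1}{2}|A|^2\right)^3 \;\le\; \left(\sum_{s\in P}(A\circ A)(s)\right)^{3} \;\le\; |P|^2 \sum_{s\in P}(A\circ A)(s)^3 \;\le\; |P|^2\,\E_3(A),$$
so $|P|^2 \ge |A|^6/(8\E_3(A))$.

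Next I would exploit the identity $\T_l(A) = (A\circ A)^{*l}(0) = \sum_{s_1+\cdots+s_l=0}\prod_{i}(A\circ A)(s_i)$. Restricting the sum to tuples $(s_1,\dots,s_l) \in P^l$ and using $(A\circ A)(s_i) \ge \Delta$ on $P$ gives the key lower bound
$$\T_l(A) \;\ge\; \Delta^l \cdot P^{*l}(0),$$
where $P^{*l}(0)$ counts zero-sum $l$-tuples in $P$. Since $A\circ A$ is symmetric, $P = -P$ and $0 \in P$, so Cauchy--Schwarz on $P^{*l}(0) = \sum_x P^{*\lceil l/2\rceil}(x) P^{*\lfloor l/2\rfloor}(x)$ (or the symmetric convolution identity) produces a bound of the shape $P^{*l}(0) \ge |P|^{l}/|lP|$.

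Finally, I would bound $|lP| \le |l(A-A)|$ and invoke the Pl\"unnecke--Ruzsa inequality (Lemma~\ref{l:plunnecke-ruzsa}) to control $|l(A-A)|$ by a suitable power of $|A-A|/|A|$ times $|A|$. Plugging the resulting lower bound on $P^{*l}(0)$, together with $|P|^2 \ge |A|^6/(8\E_3(A))$ and $\Delta = |A|^2/(2|A-A|)$, into $\T_l(A) \ge \Delta^l P^{*l}(0)$ and simplifying should yield the target inequality, with the constant $8^l$ arising from the Markov/Cauchy--Schwarz losses and the exponent $2l+1$ of $|A-A|$ coming from the Pl\"unnecke bound on $|l(A-A)|$ together with the $|A-A|^{-l}$ already present in $\Delta^l$.

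The main obstacle is the last step: a naive application of Pl\"unnecke--Ruzsa loses polynomial factors in $|A-A|/|A|$, and matching the exact exponent $2l+1$ (rather than a larger one) and the clean constant $8^l$ requires a careful balance. If the coarse Pl\"unnecke bound is insufficient, one can refine it using the Petridis-style version of Pl\"unnecke--Ruzsa applied to a well-chosen subset of $A$ (as in~(\ref{f:plunnecke-ruzsa1'})), or couple the level-set argument with the H\"older-type bound $\E_3(A) \ge |A|^6/|A-A|^2$ to absorb the surplus powers of $|A-A|$ into $\E_3^l$.
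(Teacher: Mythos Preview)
This theorem is stated in the paper as a preliminary result quoted from \cite{schoen-shkredov-higher}; no proof is given here, so there is no in-paper argument to compare against directly. Judging your sketch on its own merits: the first half is fine. The level set $P$, the Markov bound $\sum_{s\in P}(A\circ A)(s)\ge |A|^2/2$, the H\"older estimate $|P|^2\ge |A|^6/(8\E_3(A))$, and the inequality $\T_l(A)\ge \Delta^l\,\sigma_l(P)$ are all correct.

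The obstacle you identify in the last paragraph, however, is not a bookkeeping issue but a structural gap. Running your argument for even $l$ (so $\sigma_l(P)=\T_{l/2}(P)\ge |P|^l/|(l/2)P|$) together with the Pl\"unnecke bound $|(l/2)(A-A)|\le (|A-A|/|A|)^l|A|$ gives
\[
\T_l(A)\ \gg\ \frac{|A|^{6l-1}}{\E_3(A)^{l/2}\,|A-A|^{2l}}\,,
\]
which carries $\E_3^{l/2}$ rather than the required $\E_3^{l}$. Your suggested fix, inserting $\E_3(A)\ge |A|^6/|A-A|^2$, points the wrong way: to deduce the stated inequality from the one above one would need $\E_3(A)^{l/2}|A-A|\gg |A|^{2l+1}$, and after substituting that lower bound this becomes $|A|^{l-1}\gg |A-A|^{l-1}$, which fails whenever $|A-A|>|A|$. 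The Petridis refinement does not help either, since you have no direct control on the doubling of $P$. The mismatch is intrinsic to the scheme: because $\sigma_l(P)\le |P|^{l-1}$ always, any bound that enters only through $|P|$ can produce at most a factor $(|A|^6/\E_3)^{(l-1)/2}$, never the $(|A|^6/\E_3)^{l}$ implicit in the target. The proof in \cite{schoen-shkredov-higher} requires an additional ingredient beyond the level-set/Cauchy--Schwarz skeleton you outline.
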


\noindent Another relations between $\E_s$ and $\T_s$ can be found in \cite{schoen-shkredov-higher}, \cite{Sh_mixed}.

\begin{theorem}
    Let $A\subseteq \Gr$ be a finite set.
    Suppose that $\E_s (A) = |A|^{s+1}/K^{s-1}$, $s\in (1,3/2]$ and
 $\T_4 (A) := M|A|^7/K^3$, then
 $$
 \E_4 (A)
        \gg_{s-1} \frac{|A|^5}{M K }  \,.
    $$
\label{p:E_4_and_T_4}
\end{theorem}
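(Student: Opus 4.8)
The plan is to pass to the autocorrelation $f:=A\c A$, for which $\E_k(A)=\sum_x f(x)^k$; thus $f(0)=\max_x f(x)=|A|$, $\sum_x f(x)=|A|^2$, and $\T_4(A)=\sum_x(f*f)(x)^2$. Two elementary facts: since $f$ is even, $(f*f)(0)=\sum_x f(x)^2=\E(A)$, whence $\T_4(A)\ge (f*f)(0)^2=\E(A)^2$ and so $\E(A)\le \T_4(A)^{1/2}$; and, $p\mapsto \log \E_p(A)$ being convex, extrapolation from $p=1$ and $p=s$ gives $\E(A)=\E_2(A)\ge \E_1(A)\big(\E_s(A)/\E_1(A)\big)^{1/(s-1)}=|A|^3/K$. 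Multiplying the target by $\T_4(A)=M|A|^7/K^3$, it becomes $\E_4(A)\T_4(A)\gg_{s-1}|A|^{12}/K^4$. The engine is the Cauchy--Schwarz inequality $\E_4(A)\E_2(A)\ge \E_3(A)^2$: if $\E_2(A)\ge |A|^3/(MK)^{1/3}$ then already the trivial bound $\E_3(A)\ge \E_2(A)^2/|A|^2$ gives $\E_4(A)\ge \E_2(A)^3/|A|^4\ge |A|^5/(MK)$, so one may assume $\E_2(A)< |A|^3/(MK)^{1/3}$ (in particular $M< K^2$), and the problem reduces to a better lower bound for $\E_3(A)$.

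To produce it I would dyadically decompose $f=A\c A$ into level sets $S_\rho=\{x:\rho\le f(x)<2\rho\}$, $\Delta_\rho=|S_\rho|$, so that $\sum_\rho \rho\Delta_\rho$ is of order $|A|^2$ and $\sum_\rho \rho^s\Delta_\rho$ of order $\E_s(A)$, while $f*f\ge \sum_\rho \rho^2 (S_\rho*S_\rho)$ (discarding the nonnegative cross terms) yields $\T_4(A)\ge \sum_\rho \rho^4\E(S_\rho)$. Pigeonholing a scale $\rho_0$ carrying a fixed proportion of $\E_s(A)$, the key input --- and the only place where genuine additive structure, rather than Cauchy--Schwarz, enters --- is that the popular level set $S_{\rho_0}$ of $A\c A$ is additively rich, $\E(S_{\rho_0})\gg M^{-O(1)}|S_{\rho_0}|^3$, a statement in the spirit of Theorem~\ref{t:P_energy}. (To make that theorem, or Theorem~\ref{p:E_k_and_T_k}, literally applicable one must first extract from $\E_s(A)=|A|^{s+1}/K^{s-1}$ and $\E(A)\le \T_4(A)^{1/2}$ the doubling and higher-energy data in the form those theorems require, adjusting $K$ to $|A|^3/\E(A)\le K$ and passing, if need be, to a Balog--Szemer\'edi--Gowers subset $A'\sbeq A$ with $\E(A')\gg\E(A)$ and $|A'-A'|\ll K^{O(1)}|A'|$, since this only decreases $\E_4$ and $\T_4$.) Feeding $\E(S_{\rho_0})\gg M^{-O(1)}|S_{\rho_0}|^3$ into $\T_4(A)\ge \rho_0^4\E(S_{\rho_0})$ pins down $\rho_0$ and $|S_{\rho_0}|$, after which $\E_3(A)\ge \rho_0^3|S_{\rho_0}|$ is large enough to run the Cauchy--Schwarz chain of the first paragraph and conclude $\E_4(A)\gg_{s-1}|A|^5/(MK)$.

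The principal difficulty is exactly this energy-of-level-sets step: converting the hypothesis on $\E_s(A)$ with $s$ close to $1$ into near-maximal additive energy of the popular level set of $A\c A$, while tracking the dependence on all of $K$, $M$ and $s$ correctly so that the final bound has $M$ and $K$ to the first power. The restriction $s\le 3/2$ is used to keep the geometric series $\sum_\rho \rho^{2-s}$ --- which arises when one estimates the contribution to $\E_s(A)$ of the small scales of $f$, the scales that carry most of the $\ell^1$-mass $|A|^2$ but nothing of $\E_3(A)$ or $\E_4(A)$ --- summable with ratio controlled by $2-s\ge \tfrac12$; this is what lets the implied constant depend only on $s-1$ and leaves no stray power of $K$ or $|A|$. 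The negligibility of those small scales itself follows from the quadratic constraint $\T_4(A)\ge \sum_\rho \rho^4\Delta_\rho^2$ extracted above.
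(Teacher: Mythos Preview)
The paper does not prove Theorem~\ref{p:E_4_and_T_4}: it is quoted in the Preliminaries as a known result, with the surrounding text pointing to \cite{schoen-shkredov-higher} and \cite{Sh_mixed}. So there is no in-paper proof to compare your proposal against; I can only assess whether your outline stands on its own.

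Your first paragraph is clean and correct: the identity $\T_4(A)=\sum_x(f*f)(x)^2$ for $f=A\c A$, the bound $\T_4(A)\ge(f*f)(0)^2=\E(A)^2$, the H\"older extrapolation $\E_2(A)\ge |A|^3/K$ from $\E_s(A)=|A|^{s+1}/K^{s-1}$, and the Cauchy--Schwarz chain $\E_4\ge\E_3^2/\E_2\ge\E_2^3/|A|^4$ disposing of the case $\E_2(A)\ge |A|^3/(MK)^{1/3}$ are all fine. The reduction to a strong lower bound on $\E_3(A)$ in the remaining range is also the right instinct.

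The gap is exactly where you flag it. You assert that a popular dyadic level set $S_{\rho_0}$ of $A\c A$ satisfies $\E(S_{\rho_0})\gg M^{-O(1)}|S_{\rho_0}|^3$, calling this ``in the spirit of Theorem~\ref{t:P_energy}'', but neither Theorem~\ref{t:P_energy} nor Theorem~\ref{p:E_k_and_T_k} delivers this: their hypotheses involve $\E(A)$ together with $\E_{3+\eps}(A)$ (respectively $\E_3(A)$ with $|A-A|$), not $\E_s(A)$ for $s\le 3/2$ together with $\T_4(A)$, and the conclusion of Theorem~\ref{t:P_energy} is about the single set $P=\{x:(A\c A)(x)\ge |A|/2K\}$, not an arbitrary pigeonholed dyadic piece. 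Your proposed fix via Balog--Szemer\'edi--Gowers does not work as stated: passing to $A'\subseteq A$ with $|A'-A'|\ll K^{O(1)}|A'|$ costs a factor $K^{-O(1)}$ in $|A'|/|A|$, and since the target inequality has $M$ and $K$ to the \emph{first} power, any polynomial loss in $K$ destroys the conclusion. (Your parenthetical ``this only decreases $\E_4$ and $\T_4$'' is true but unhelpful --- the parameter $M$ is defined via $\T_4(A)=M|A|^7/K^3$, so replacing $|A|$ by $|A'|\ge K^{-O(1)}|A|$ can blow $M$ up by $K^{O(1)}$.) Finally, even granting $\E(S_{\rho_0})\gg M^{-C}|S_{\rho_0}|^3$ for some absolute $C$, you have not shown that feeding this into $\T_4(A)\ge\rho_0^4\E(S_{\rho_0})$ and $\E_3(A)\ge\rho_0^3|S_{\rho_0}|$ actually closes to $\E_3(A)\gg|A|^4/(MK)^{2/3}$, which is what the Cauchy--Schwarz step requires; the exponent bookkeeping is nontrivial and is not carried out. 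As written this is a plausible plan with the central lemma missing, not a proof.
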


By an arithmetic progression of dimension $d$ and size $L$ we mean a  set of the form
\begin{equation}\label{def:P_d}
    Q = \{ a_0 + a_1 x_1 + \dots + a_d x_d ~:~ 0\le x_j < l_j \} \,,
\end{equation}
where $L:=l_1 \dots l_d$.
$Q$ is said to be {\it proper} if all of the sums in (\ref{def:P_d}) are distinct.
By a {\it proper coset progression} of dimension $d$ we will mean a subset of $\Gr$
of the form $Q + H$, where $H \subseteq \Gr$ is a subgroup,
$Q$ is a proper progression of dimension $d$
and the sum is direct in the sense that $q+h = q' +h'$ if and only if $h = h'$ and $q = q'$.
By the size of a proper coset progression we mean simply its cardinality.

Finally, let us recall the main result proved in  \cite{sanders-bogolubov}.


\begin{theorem}
    Suppose that $\Gr$ is an abelian group and  $A,S\subseteq \Gr$ are finite non--empty sets such that
    $|A+S| \ls K \min \{ |A|, |S| \}$.
    Then $(A-A)+(S-S)$ contains a proper symmetric $d(K)$--dimensional coset progression $P$
    of size $\exp (-h(K)) |A+S|$.
    Moreover, we may take $d(K) = O(\log^6 K)$, and $h(K) = O(\log^6 K \log \log K)$.
\label{t:sanders-bogolubov}
\end{theorem}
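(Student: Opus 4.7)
The plan is to follow Sanders' strategy, sharpening the classical Bogolyubov argument by an iterated application of the Croot--Sisask $L^p$-almost periodicity principle.

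First I would fix the right object: by the hypothesis and Lemma~\ref{l:plunnecke-ruzsa}, every iterated sumset of $A$ and $S$ with boundedly many terms has size at most $K^{O(1)}\min\{|A|,|S|\}$. Consider $f := (1_A \c 1_A) * (1_S \c 1_S)$, which is non--negative, supported inside $(A-A)+(S-S)$, satisfies $\sum_x f(x) = |A|^2|S|^2$, and has $f(0) = |A|\cdot|S|$. The aim is to produce a large symmetric, structured set $H$ on which $f(h)>0$ holds pointwise, so that $H \subseteq \mathrm{supp}(f) \subseteq (A-A)+(S-S)$.

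Next I would apply Croot--Sisask almost periodicity with parameters $p \asymp \log K$ and small $\epsilon$ to obtain sets $T_A, T_S \subseteq \Gr$, each of size at least $\exp(-O(\log^4 K))\,|A+S|$, such that translation by $T_A$ (resp.~$T_S$) perturbs $1_A \c 1_A$ (resp.~$1_S \c 1_S$) by at most $\epsilon$ in a suitable $L^p$ norm. Iterating the shift $k \asymp \log^2 K$ times enlarges the near--period set to $kT_A - kT_A$ (resp.~$\ell T_S - \ell T_S$), while the error grows only linearly in $k$. Putting $H := kT_A - kT_A + \ell T_S - \ell T_S$ and unpacking the $L^p$ estimates via H\"{o}lder then yields $f(h) \ge \tfrac12 f(0)$ for all $h \in H$; hence $H \subseteq (A-A)+(S-S)$, with $|H| \ge \exp(-O(\log^6 K \log\log K))\,|A+S|$. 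Since $H$ is an iterated sumset of base sets of controlled doubling, Chang's theorem (Lemma~\ref{l:chang-covering}) together with the standard Freiman--type estimates for generalised progressions then extracts inside $H$ a proper symmetric coset progression of dimension $d(K) = O(\log^6 K)$ and of the required size.

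The main obstacle is the Croot--Sisask iteration itself and the simultaneous balancing of $p$, $\epsilon$ and $k$: to reach $d(K) = O(\log^6 K)$ rather than a polynomial in $K$ one has to run Croot--Sisask relative to a growing Bohr set at each step, so that successive near--period sets refine each other coherently, and to keep track of the $L^p$ error terms uniformly along the iteration. A naive unconditional iteration would yield only polynomial bounds in $K$. The passage from the near--periodic set $H$ to a proper coset progression, although technical, is by now standard once Chang--type covering and the Pl\"{u}nnecke--Ruzsa estimates of Lemma~\ref{l:plunnecke-ruzsa} are in hand.
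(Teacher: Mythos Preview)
The paper does not prove this theorem at all: Theorem~\ref{t:sanders-bogolubov} is stated in the preliminaries section as a quotation of the main result of \cite{sanders-bogolubov}, with no proof given. So there is no ``paper's own proof'' to compare your proposal against.

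That said, your outline is a fair high-level sketch of Sanders' actual argument in \cite{sanders-bogolubov}: the key ingredients really are Croot--Sisask $L^p$-almost periodicity with $p$ of order $\log K$, iteration of the almost-period set, and then a Chang-type spectral argument to extract the coset progression. Your description of the main obstacle --- running the Croot--Sisask step relative to Bohr sets so that the iteration is coherent --- is also accurate. If the intent was merely to indicate why the theorem is true rather than to reproduce a proof for this paper, the sketch is adequate; but for the purposes of the present paper no proof is required, only the citation.
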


\section{Additive dimension of sets with small doubling}

At the beginning
we derive a consequence of Theorem \ref{t:sanders-bogolubov}.

\begin{lemma}\label{l:new-green-ruzsa}
Let $A$ be a subset of an abelian group such that $|A+A|\le K|A|.$
Then
$$|kA|\le \Big(\frac{3ek}{K}\Big)^{O(K \log^{8} K)}|A|,$$
for every $k\ge K.$
\end{lemma}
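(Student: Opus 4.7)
The plan is to cover $A$ by a single coset progression plus a few ``small'' generating sets, using Theorem \ref{t:sanders-bogolubov} together with Chang's covering lemma, and then bound $|kA|$ by the product of the sizes of the $k$-fold sumsets of the pieces.

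First, I would apply Theorem \ref{t:sanders-bogolubov} with $S = A$: since $|A + A| \le K|A|$, this yields a proper symmetric coset progression $P \subseteq 2A - 2A$ of dimension $d = O(\log^6 K)$ with $|P| \ge e^{-h(K)} |A+A| \ge e^{-h(K)} |A|$, where $h(K) = O(\log^6 K \log \log K)$. By Lemma \ref{l:plunnecke-ruzsa}, $|A+P| \le |3A - 2A| \le K^5 |A|$, so $|A+P| \le L|P|$ with $L \le K^5 e^{h(K)}$, and applying Lemma \ref{l:chang-covering} to $A$ and $P$ would furnish sets $X_1, \dots, X_l$, each of size at most $2K$, such that
\[
A \subseteq (P - P) + (X_1 - X_1) + \dots + (X_l - X_l), \qquad l \le \log(2KL) = O(\log^6 K \log \log K).
\]

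Next, taking $k$-fold sumsets of both sides gives $kA \subseteq k(P - P) + \sum_{i=1}^l k(X_i - X_i)$, so $|kA| \le |k(P-P)| \cdot \prod_{i=1}^l |k(X_i - X_i)|$. Since $P$ is a proper $d$-dimensional coset progression with $|P| \le |2A - 2A| \le K^4|A|$, a standard estimate gives $|k(P - P)| \le (2k)^d |P| \le (2k)^d K^4 |A|$. For each $X_i$, set $Y_i := X_i \cup (-X_i)$ (so $|Y_i| \le 4K$); then $k(X_i - X_i) \subseteq 2kY_i$, and the stars-and-bars inequality combined with $\binom{a+b}{b} \le (e(a+b)/b)^b$ used with $b = 4K - 1$ yields
\[
|k(X_i - X_i)| \le \binom{2k + 4K - 1}{4K - 1} \le \Big(\frac{3ek}{K}\Big)^{4K}
\]
for $k \ge K$ (using $(2k+4K-1)/(4K-1) \le 3k/K$ in this range).

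Combining, $|kA| \le (2k)^d K^4 (3ek/K)^{4Kl} |A|$, and the exponent satisfies $4Kl = O(K \log^6 K \log \log K) = O(K \log^8 K)$. The prefactor $(2k)^d K^4$ is absorbed into $(3ek/K)^{O(K \log^8 K)}$ because $d \log(2k) = O(\log^6 K \cdot \log k)$ is dominated by $K \log^8 K \cdot \log(3ek/K)$ for every $k \ge K$. I expect the main obstacle to be the sumset estimate for $|k(X_i - X_i)|$: one must extract a $(k/K)^{O(K)}$ factor rather than merely a $k^{O(K)}$ one. This comes from applying the binomial bound with the ``dimension'' $4K - 1$ as the exponent rather than the ``summand count'' $2k$, and is exactly where the hypothesis $k \ge K$ is used.
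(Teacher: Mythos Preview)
Your proposal is correct and follows essentially the same route as the paper: apply Theorem~\ref{t:sanders-bogolubov} to obtain $P\subseteq 2A-2A$, bound $|A+P|$ via Pl\"unnecke--Ruzsa, invoke Chang's covering lemma to get $A\subseteq (P-P)+\sum_i(X_i-X_i)$ with $l=O(\log^{O(1)}K)$ and $|X_i|\le 2K$, and then estimate $|kA|$ by $(2k)^d|P|\cdot\prod_i|kX_i-kX_i|$ using stars-and-bars with the exponent on the $K$ side (which is exactly where $k\ge K$ enters). The only cosmetic difference is that the paper bounds $|kS_i-kS_i|\le\binom{2K+k-1}{k}^2$ directly, whereas you pass through $Y_i=X_i\cup(-X_i)$ and bound $|2kY_i|$; both give $(O(k/K))^{O(K)}$ per factor and the final exponent $O(K\log^8 K)$ is the same.
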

\begin{proof}
By Theorem \ref{t:sanders-bogolubov} there exists a proper generalized arithmetic progression $P$
of dimension
$d\ll \log^7 K$ and size at least $|A|/K^{O(\log^7 K)}$ such that $P\sbeq 2A-2A.$
Thus, applying Pl\"{u}nnecke--Ruzsa inequality (\ref{f:plunnecke-ruzsa2}), we have
$$|A+P|\le |3A-2A|\le K^{5}|A|\le K^{O(\log^7 K)}|P| \,.$$
By Lemma \ref{l:chang-covering}, we obtain
$$A\sbeq P-P+(S_1-S_1)+\dots +(S_l-S_l)$$
with $l\ll (\log 2K)^{8}.$
Therefore,
\begin{eqnarray*}
|kA|&\le& |kP-kP+(kS_1-kS_1)+\dots +(kS_l-kS_l)|\le \binom{2K+k-1}{k}^{2l}|kP-kP|\\
&\le& \binom{2K+k-1}{k}^{2l}(2k)^d|P|\le \Big(\frac{3ek}{2K-1}\Big)^{4lK}(2k)^d|P|
    \le\Big(\frac{3ek}{K}\Big)^{O(K \log^{8} K)}|A| \,,
\end{eqnarray*}
provided that $k\ge K.\hfill\Box$
\end{proof}

\bigskip

\noindent Our first result refines Sanders' Theorem \ref{t:Sanders}, provided that $K$ is not too large.

\begin{theorem}\label{t:dim}
Let $A\subseteq \Gr$ be a finite set and
suppose that $|A+A|\le K|A|.$
Then, we have $$\dim(A)\ll K\log |A| \,$$
and
\begin{equation}\label{f:dim_1}
    \dim(A)\ll \log |A|+K(\log K)^{8} \log \log {|A|} \,.
\end{equation}
\end{theorem}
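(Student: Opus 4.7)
The plan rests on a simple counting observation: if $\Lambda\subseteq A$ is a dissociated set of size $d$, then for any $1\le k\le d$ the $\binom{d}{k}$ sums of $k$-element subsets of $\Lambda$ are pairwise distinct, since two equal such sums would produce a nontrivial $\{-1,0,1\}$-combination of elements of $\Lambda$ summing to zero. Since each such sum lies in $kA$, this yields the key inequality
\[
\binom{d}{k} \le |kA|.
\]

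For the first bound I combine this with the Pl\"unnecke--Ruzsa inequality (\ref{f:plunnecke-ruzsa2}) to get $\binom{d}{k} \le K^k|A|$, and then use the elementary estimate $\binom{d}{k} \ge (d/k)^k$ (valid for $1\le k\le d$) to rearrange this into $(d/(kK))^k \le |A|$. Choosing $k=\lfloor d/(2K)\rfloor$ (the case $d<2K$ being trivial since then $d\ll K\log|A|$ for $|A|\ge 2$) makes $d/(kK)\ge 2$, so $2^k\le |A|$, i.e.\ $k\le\log|A|$; this gives $d\le 2K(\log|A|+1)\ll K\log|A|$.

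For the refined bound I replace Pl\"unnecke--Ruzsa by Lemma \ref{l:new-green-ruzsa}, which furnishes $|kA|\le (3ek/K)^{O(K\log^8 K)}|A|$ as soon as $k\ge K$. Applying this with $k=\lfloor d/4\rfloor$ (assumed $\ge K$; the complementary range being trivial) and using $\binom{d}{\lfloor d/4\rfloor}\ge 2^{d/2-O(1)}$, taking logarithms yields
\[
d \ll \log|A| + K(\log K)^8 \log(d/K).
\]
Now I plug in the bound $d\ll K\log|A|$ already established: it forces $\log(d/K)\ll\log\log|A|$, and substituting back gives $\dim(A)\ll\log|A|+K(\log K)^8\log\log|A|$, as required.

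The main obstacle is engineering the bootstrap. A naive application of Lemma \ref{l:new-green-ruzsa} to the counting inequality produces an estimate whose right-hand side still depends on $d$, and so is useless without prior control. Establishing the crude bound $d\ll K\log|A|$ first, and only then feeding it into the refined sumset estimate, is precisely what converts the spurious $\log(d/K)$ factor into the desired $\log\log|A|$. Handling the small-$d$ regimes where $d<2K$ or $d<4K$ (outside the range of applicability of the sumset bounds used) requires nothing more than trivial inequalities.
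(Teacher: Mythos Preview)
Your proof is correct and follows essentially the same approach as the paper: a lower bound on $|k\Lambda|$ from dissociatedness (the paper phrases this via Rudin's inequality, giving $|\Lambda|^k/(Ck)^k\le|k\Lambda|$, but explicitly notes that simple counting---your $\binom{d}{k}$ argument---works just as well) combined with Pl\"unnecke--Ruzsa for the first bound and Lemma~\ref{l:new-green-ruzsa} for the second.

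The only difference is in how the resulting inequality is solved. You take $k$ proportional to $d$ and then, for the refined bound, bootstrap via the crude estimate $d\ll K\log|A|$ to control $\log(d/K)$. The paper instead chooses $k$ directly of the order of the desired bound, namely $k\sim\log|A|+K(\log K)^8\log\log|A|$, and reads off $|\Lambda|\ll k$ from $(|\Lambda|/(Ck))^k\le(3ek/K)^{O(K\log^8K)}|A|$; with this choice the right-hand side is $2^{O(k)}$, so no bootstrap is needed. Both routes are equivalent in strength and length; the paper's direct choice of $k$ is marginally cleaner, while your bootstrap makes the dependence of the argument on the first bound more transparent.
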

\begin{proof}
Let $\Lambda\sbeq A$ be a dissociated set such that $|\Lambda|=\dim(A).$  Then
by Lemma \ref{l:rudin} (or simple counting arguments)
and Lemma \ref{l:plunnecke-ruzsa} we have for some absolute constant $C>0$
$$\frac{|\Lambda|^{k}}{(C k)^k}\le |k\Lambda|\le |kA|\le K^k|A|.$$
Taking $k\sim \log |A|$ we obtain the first assertion.

Similarly, by Lemma \ref{l:new-green-ruzsa} we have
$$\frac{|\Lambda|^{k}}{(C k)^k}\le \Big(\frac{3ek}{K}\Big)^{O(K \log^{8} K)}|A| \,,$$
for  $k\ge K.$
Thus, putting
$$
    k\sim \log |A|+K(\log K)^{8} \log \log {|A|} \,,
$$
we get
$$|\Lambda|\ll \log |A| +K(\log K)^{8} \log \log {|A|}  $$
as required.
$\hfill\Box$
\end{proof}

\bigskip

In the above proof the hardest case is when the size of $kA$ attains its maximal value $K^k|A|.$
However, we show that if it is the case then one can find a huge subset of $A$ with very small additive dimension.

\begin{theorem}\label{t:dim-large sumset}
Let $A\subseteq \Gr$ be a set and $K\ge 1$, $\eps>0$ be real numbers.
Suppose that $|A+A|\le K|A|$ and $|kA|\ge K^{k-\e}|A|$ for some $k\ge 3.$
Then there exists a set $A'\sbeq A$ of
size at least $|A|/2$ such that $\dim(A') \ll 2^k K^\e \log |kA|.$
\end{theorem}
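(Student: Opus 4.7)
I combine the $\delta$-version of Pl\"unnecke--Ruzsa (Lemma~\ref{l:plunnecke-ruzsa}) with Rudin's inequality (Lemma~\ref{l:rudin}). The essential observation is that the hypothesis $|kA|\ge K^{k-\eps}|A|$ rewrites as $|A|\le K^{\eps-k}|kA|$, and substituting this into a Pl\"unnecke estimate of the form $(2K)^k|A|$ produces $2^kK^\eps|kA|$, with the factor $K^{k-\eps}$ cancelling exactly to leave the target constant $2^kK^\eps$.

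First I apply Lemma~\ref{l:plunnecke-ruzsa} with $j=1$, $B=A$ and $\delta=1/2$ to extract $A'\sbeq A$ with $|A'|\ge |A|/2$ and $|A'+kA|\le (2K)^k|A'|$. Let $\Lambda\sbeq A'$ be a dissociated subset realising $\dim(A')$. Since $\Lambda\sbeq A'$ and $k\Lambda\sbeq kA$, the inclusion $(k+1)\Lambda=\Lambda+k\Lambda\sbeq A'+kA$ gives $|(k+1)\Lambda|\le (2K)^k|A'|\le (2K)^k|A|$. Substituting the hypothesis in the form $|A|\le K^{\eps-k}|kA|$ then yields the key estimate
\[
|(k+1)\Lambda|\le 2^kK^\eps|kA|.
\]
Rudin's inequality, in the form $|(k+1)\Lambda|\ge |\Lambda|^{k+1}/(C(k+1))^{k+1}$, gives $|\Lambda|\le C(k+1)\bigl(2^kK^\eps|kA|\bigr)^{1/(k+1)}$.

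To reach the claimed bound $|\Lambda|\ll 2^kK^\eps\log|kA|$, I repeat the argument with a larger Rudin exponent $m$ in place of $k+1$: Lemma~\ref{l:plunnecke-ruzsa} with parameter $m-1$ yields $|A'+(m-1)A|\le (2K)^{m-1}|A'|$, and since $m\Lambda\sbeq A'+(m-1)A$ the same substitution produces $|m\Lambda|\le (2K)^{m-1}K^{\eps-k}|kA|$. Choosing $m\sim\log|kA|$ turns the residual factor $|kA|^{1/m}$ from Rudin into a constant, and, after tracking the exponents of $2$ and $K$ carefully, gives the desired logarithmic bound.

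The main obstacle is the final balance of parameters: one must choose $m$ so that the saving $K^{k-\eps}$ obtained from the hypothesis survives the large Pl\"unnecke prefactor $(2K)^{m-1}$ when $m\sim\log|kA|$, rather than being absorbed into a $K^m$ factor. The right choice interpolates between the natural value $m=k+1$ (which gives the desired $K^\eps$ exponent but leaves a polynomial factor $|kA|^{1/(k+1)}$) and $m\sim\log|kA|$ (which replaces the polynomial factor by a constant at the cost of worsening the exponent of $K$); verifying that the optimised bound is exactly $2^kK^\eps\log|kA|$ is the most delicate step.
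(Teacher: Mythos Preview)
Your first two steps are exactly right and coincide with the paper's proof: extract $A'\subseteq A$ with $|A'|\ge |A|/2$ and $|A'+kA|\le (2K)^k|A|$ via the $\delta$--Pl\"unnecke lemma, then substitute $|A|\le K^{\eps-k}|kA|$ to obtain
\[
|A'+kA|\le 2^kK^\eps\,|kA|.
\]
The gap is in what you do next. You go back to the \emph{original} hypothesis $|A+A|\le K|A|$ and apply Pl\"unnecke with parameter $m-1$, getting $|m\Lambda|\le (2K)^{m-1}K^{\eps-k}|kA|=2^{m-1}K^{m-1+\eps-k}|kA|$. When you then take $m\sim\log|kA|$ to kill the factor $|kA|^{1/m}$ in Rudin, the exponent of $K$ becomes $(m-1+\eps-k)/m\to 1$, so the bound you actually obtain is $\dim(A')\ll K\log|kA|$, not $2^kK^\eps\log|kA|$. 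No choice of $m$ interpolating between $k+1$ and $\log|kA|$ repairs this: optimising $m\cdot K^{(m-1+\eps-k)/m}|kA|^{1/m}$ gives at best $\asymp K\bigl(\log|kA|-(k+1-\eps)\log K\bigr)$, which is still of order $K\log|kA|$ in the generic regime. You have correctly identified this as ``the main obstacle'', but the proposal does not resolve it.

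The fix is to feed the \emph{new} inequality $|A'+kA|\le L\,|kA|$ (with $L=2^kK^\eps$) into a second Pl\"unnecke--Rudin step, rather than returning to $|A+A|\le K|A|$. Concretely, from $|kA+A'|\le L|kA|$ Pl\"unnecke yields $|mA'|\le L^m|kA|$ for every $m$, and then Rudin gives $|\Lambda|\le Cm\,L\,|kA|^{1/m}$; taking $m\sim\log|kA|$ now produces $|\Lambda|\ll L\log|kA|=2^kK^\eps\log|kA|$ directly, with no balancing needed. This second Pl\"unnecke--Rudin step is precisely (the asymmetric case of) Sanders' Theorem~\ref{t:Sanders}, which is how the paper finishes: it simply cites that theorem applied to the pair $(kA,A')$ with doubling constant $2^kK^\eps$.
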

\begin{proof} From Lemma \ref{l:plunnecke-ruzsa}
it follows  that there exists a
set $X,\, |X|\ge |A|/2$ such that $|X+kA|\le (2K)^k |A|.$
Therefore
\begin{equation}\label{tmp:12.06.2013_1}
    |X+kA|\le 2^kK^{\e}|kA| \,.
\end{equation}
By Sanders' Theorem \ref{t:Sanders}, we have
$\dim(X)\ll  2^kK^{\e} \log|kA|.$
This completes the proof.$\hfill\Box$
\end{proof}



\bigskip

\noindent Recall the main result of \cite{schoen-freiman} (see Lemma 3).

\begin{theorem}\label{t:ts-PFRC} Let $A$ be a finite set of an abelian group such that $|A+A|\le K|A|.$ Then for every $k\in \N$
there exist sets $X\sbeq A$ and $Y\sbeq A+A$ such that  $|X|\ge (2K)^{-2^{k+1}}|A|,$ $|Y|\ge |A|$ and
$\E(X,Y)\ge K^{-2/k}|X|^2|Y|.$
\end{theorem}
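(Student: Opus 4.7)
\noindent The plan is to pass from the energy inequality to a small-doubling condition via Cauchy--Schwarz, then to construct $X$ and $Y$ by induction on $k$.

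From the standard inequality
$$
\E(X,Y)\ge \frac{|X|^{2}|Y|^{2}}{|X+Y|},
$$
the desired bound $\E(X,Y)\ge K^{-2/k}|X|^{2}|Y|$ follows at once from $|X+Y|\le K^{2/k}|Y|$. The task therefore reduces to producing $X\subseteq A$ of size $\ge (2K)^{-2^{k+1}}|A|$ and $Y\subseteq A+A$ of size $\ge|A|$ for which $X$ has essentially $K^{2/k}$-doubling relative to~$Y$.

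For the base case $k=1$, the refined Pl\"unnecke--Ruzsa inequality (Lemma~\ref{l:plunnecke-ruzsa} with $\delta=1/2$, $j=1$, $k=2$) yields $X\subseteq A$ of size $\ge|A|/2$ with $|X+2A|\le(2K)^{2}|X|$; taking $Y:=A+A$ gives $|X+Y|\le (2K)^{2}|X|\le (2K)^{2}|Y|$, which suffices (the absolute constant is absorbed into the generous $(2K)^{-2^{k+1}}$ allowance on $|X|$). For the inductive step from $k$ to $k+1$, I would start from a pair $(X_{k},Y_{k})$ satisfying $|X_{k}+Y_{k}|\le K^{2/k}|Y_{k}|$ and apply Pl\"unnecke--Ruzsa in the reversed form to the pair $(Y_{k},X_{k})$ to obtain $Y_{k}'\subseteq Y_{k}$ of comparable size with $|Y_{k}'+mX_{k}|\le (2K^{2/k})^{m}|Y_{k}'|$ for every $m\ge1$. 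A pigeonhole over the local growth ratios $|Y_{k}'+(m+1)X_{k}|/|Y_{k}'+mX_{k}|$ across scales $m=1,\dots,k+1$, combined with a Ruzsa triangle inequality and a pull-back into $A+A$ (to prevent drift into higher sumsets $jA$), would locate the required sharper-doubling sub-pair $(X_{k+1},Y_{k+1})$. The size loss at each iteration is roughly $(2K)^{2^{k+1}}$, and telescoping over $k$ steps gives the overall $(2K)^{-2^{k+1}}|A|$ lower bound on $|X|$.

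The main obstacle is precisely the fractional sharpening of the doubling exponent $2/k\to 2/(k+1)$ at each step. A naive iteration of Pl\"unnecke--Ruzsa multiplies exponents instead of reducing them harmonically, so the refinement step seems to demand a careful combination of a Balog--Szemer\'edi--Gowers-style extraction of a high-energy sub-pair, a Katz--Koester popular-element substitution, and Ruzsa's triangle inequality (together with Chang's covering Lemma~\ref{l:chang-covering}) to keep $Y_{k+1}$ inside $A+A$ rather than in $A+A+A,\,3A-3A,\ldots$ An alternative and perhaps cleaner realisation of the iteration might use the higher-energy identities of Lemma~\ref{l:E_k-identity} and the interplay between $\E_{s}(A)$ and $\T_{s}(A)$ recorded in Theorem~\ref{p:E_4_and_T_4}, since these provide direct control on how energy surplus on $A$ transfers to structured sub-pairs. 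In any realisation, the iterated-exponential tower $2^{k+1}$ in the density bound is the price of iterating a refinement step whose size-cost essentially squares at each round.
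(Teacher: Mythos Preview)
This theorem is not proved in the present paper: it is quoted from \cite{schoen-freiman} (Lemma~3 there) and used as a black box. So there is no proof here to compare against; the only question is whether your sketch stands on its own.

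It does not. The reduction to $|X+Y|\le K^{2/k}|Y|$ via Cauchy--Schwarz is fine, and the base case $k=1$ is morally correct --- though note that $(2K)^{2}$ versus the required $K^{2}$ is a genuine factor of~$4$ in the \emph{doubling} constant, and slack in the \emph{size} of $X$ cannot absorb it, since the energy inequality and the density lower bound are independent requirements. The substantive gap is the inductive step, which you never actually carry out. Your proposed pigeonhole over the ratios $|Y'_{k}+(m{+}1)X_{k}|/|Y'_{k}+mX_{k}|$ for $m=1,\dots,k{+}1$ yields a step of growth at most $(2K^{2/k})^{(k+1)/(k+1)}=2K^{2/k}$, i.e.\ no improvement whatsoever on the inductive hypothesis --- precisely the failure you then flag as ``the main obstacle''. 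What follows is a list of tools (Balog--Szemer\'edi--Gowers, Katz--Koester, Chang covering, higher energies, Lemma~\ref{l:E_k-identity}, Theorem~\ref{p:E_4_and_T_4}) with no concrete mechanism for combining them to effect the sharpening $K^{2/k}\to K^{2/(k+1)}$ while keeping $Y$ inside $A+A$. That is a research outline, not an argument.

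If you want to reconstruct the proof, it is more efficient to consult \cite{schoen-freiman} directly than to reverse-engineer from the statement; the iteration there is not organised as a one-unit-at-a-time refinement of the exponent $2/k$.
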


\noindent Combining Theorem \ref{t:ts-PFRC} with Theorem \ref{t:E(A,B)}, we obtain the following consequence.

\begin{corollary}\label{c:small-dim-ts-PFRC}
Let $A$ be a finite set of an abelian group such that $|A+A|\le K|A|.$ Then for every $k\in \N$
there exists set $X\sbeq A$ such that $|X|\gg (2K)^{-2^{k+1}}|A| / K^{1/2}$ and $\dim(X)\ll K^{2/k}\log |A|.$
\end{corollary}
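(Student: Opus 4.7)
The plan is to combine Theorem~\ref{t:ts-PFRC} and Theorem~\ref{t:E(A,B)} in a straightforward way; the only work is in matching the roles of the sets. First, I apply Theorem~\ref{t:ts-PFRC} to the given set $A$ with parameter $k$ to obtain sets $X_0 \sbeq A$ and $Y \sbeq A+A$ satisfying
$$|X_0| \ge (2K)^{-2^{k+1}} |A|, \qquad |Y| \ge |A|, \qquad \E(X_0,Y) \ge K^{-2/k} |X_0|^2 |Y|.$$

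Next, I feed this energy bound into Theorem~\ref{t:E(A,B)}. The key bookkeeping step is to identify the ``$A$'' of Theorem~\ref{t:E(A,B)} with $Y$ and its ``$B$'' with $X_0$. Since $\E$ is symmetric, the hypothesis $\E(Y,X_0) \ge |Y| |X_0|^2 / K'$ holds with $K' = K^{2/k}$. The conclusion of Theorem~\ref{t:E(A,B)} then produces a subset $X \sbeq X_0 \sbeq A$ with
$$\dim(X) \ll K' \log |Y| = K^{2/k} \log |Y|, \qquad |X| \ge 2^{-3} (K')^{-1/2} |X_0| = 2^{-3} K^{-1/k} |X_0|.$$

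To finish, I convert $\log |Y|$ into $\log |A|$. Since $Y \sbeq A+A$ we have $|Y| \le K|A|$, so $\log |Y| \le \log K + \log |A| \ll \log |A|$ (assuming without loss of generality that $K \le |A|$, for otherwise the dimension bound is vacuous). Multiplying the size inequalities, $|X| \gg K^{-1/k} (2K)^{-2^{k+1}} |A|$, which (for $k\ge 2$) is at least $(2K)^{-2^{k+1}} |A| / K^{1/2}$ as claimed.

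There is no real obstacle beyond the bookkeeping of which set plays which role in Theorem~\ref{t:E(A,B)}; the mild point to track is that the size lower bound one extracts is $K^{-1/k}|X_0|$ rather than $K^{-1/2}|X_0|$, so the stated $K^{1/2}$ in the denominator is a loose but convenient uniform absorbent (the interesting range of the corollary being $k\ge 2$, since for $k=1$ the dimension bound $K^{2}\log |A|$ is already worse than Sanders' theorem).
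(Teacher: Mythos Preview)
Your proof is correct and is exactly the combination of Theorem~\ref{t:ts-PFRC} with Theorem~\ref{t:E(A,B)} that the paper indicates (the paper gives no further details). The bookkeeping---taking $Y$ as the ``$A$'' and $X_0$ as the ``$B$'' in Theorem~\ref{t:E(A,B)}, and your remarks on $\log|Y|\ll\log|A|$ and on the $k=1$ case---is handled properly; in fact you recover the slightly sharper $K^{-1/k}$ in place of $K^{-1/2}$ for $k\ge 2$.
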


Using a well-known Croot-Sisask Lemma Sanders proved the following result (Proposition 3.1 in \cite{sanders-bogolubov}).

\begin{theorem}\label{t:sanders-X}
Let $A$ be a finite subset of an abelian group with $|A+A|\le K|A|$. Then for every $k\in \N$
there exists a set $X\sbeq A-t$ for some $t$, of size at least $e^{-O(k^2\log^22K)}|A+A|$ such that
$kX\sbeq 2A-2A.$
\end{theorem}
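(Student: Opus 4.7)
The plan is to apply the Croot--Sisask almost-periodicity lemma to the convolution $g := A * A$, which is supported on $2A$, and then exploit support disjointness to conclude that any $k$-fold sum of almost periods must lie in $2A - 2A$.

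First, I would invoke Croot--Sisask with the doubling hypothesis $|A + A| \le K|A|$, choosing parameters $p := \lceil \log 2K \rceil$ and $\eps := c/k$ for a small absolute constant $c$. This produces a set $T \sbeq A$ of size
$$
|T| \ge (2K)^{-Cp\eps^{-2}} |A| = e^{-O(k^2 \log^2 2K)} |A|,
$$
such that every $t \in T - T$ is an $L^p$-almost period of $g$, in the sense that
$$
\|g(\cdot - t) - g\|_p \le \eps \|g\|_p.
$$

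Next, the triangle inequality iterated $k$ times gives $\|g(\cdot - s) - g\|_p \le k \eps \|g\|_p = c \|g\|_p$ for every $s = s_1 + \dots + s_k$ with $s_i \in T - T$. On the other hand, if $s \notin 2A - 2A$, the supports $2A$ of $g$ and $2A + s$ of $g(\cdot - s)$ are disjoint, forcing $\|g(\cdot - s) - g\|_p^p = 2\|g\|_p^p$, i.e.\ $\|g(\cdot - s) - g\|_p = 2^{1/p} \|g\|_p > \|g\|_p$. Choosing $c < 1$ contradicts the previous bound, so
$$
k(T - T) \sbeq 2A - 2A.
$$
Fixing any $t_0 \in T$ and putting $X := T - t_0 \sbeq A - t_0$, each $x \in X$ lies in $T - T$, hence $kX \sbeq k(T - T) \sbeq 2A - 2A$. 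The stated lower bound $|X| = |T| \ge e^{-O(k^2 \log^2 2K)} |A+A|$ then follows by absorbing the factor $K$ from $|A+A| \le K|A|$ into the exponential via $\log K \le \log^2 2K$.

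The main obstacle is the joint calibration of the Croot--Sisask parameters: only with $p \sim \log 2K$ and $\eps \sim 1/k$ does the quantitative bound $(2K)^{-Cp\eps^{-2}}$ simplify to $e^{-O(k^2 \log^2 2K)}$, and one must simultaneously verify $k \eps < 2^{1/p}$ so that the support-disjointness contradiction is legitimate, a condition which becomes tight as $K \to \infty$ and $2^{1/p} \to 1$.
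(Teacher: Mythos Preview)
The paper does not give its own proof of this result; it is quoted as Proposition~3.1 of Sanders' paper \cite{sanders-bogolubov}, where the argument is indeed via the Croot--Sisask lemma, so your overall strategy is the right one. There is, however, a genuine gap in your execution. You invoke Croot--Sisask in the \emph{relative}-error form
$\|\tau_t g - g\|_p \le \eps\|g\|_p$,
but the standard lemma (Croot--Sisask, and the version Sanders uses) only yields the \emph{absolute} bound
$\|\tau_t(1_A * f) - 1_A * f\|_p \le \eps\,|A|\,\|f\|_p$.
With $f=1_A$ and $g=1_A*1_A$, H\"older on the support $A+A$ gives only $\|g\|_p \ge |A|^{1+1/p}K^{-(1-1/p)}$, so passing from absolute to relative error costs a factor $K^{1-1/p}$. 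With $p\sim\log 2K$ this factor is essentially $K$, forcing $\eps\ll 1/(kK)$ rather than $\eps\sim 1/k$, and the resulting size bound becomes $e^{-O(k^2K^2\log^2 2K)}|A|$, far weaker than claimed. Your ``main obstacle'' paragraph also misidentifies the difficulty: the condition $k\eps<2^{1/p}$ is automatic for any fixed $c=k\eps<1$, since $2^{1/p}>1$ for every finite $p$; the real obstruction is the absolute--to--relative conversion above.

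A cleaner route is to take $g=1_A*1_{-A}$ instead of $1_A*1_A$: then $g$ is supported on $A-A$, the disjoint-support step still lands you in $(A-A)-(A-A)=2A-2A$, and crucially $g(0)=|A|$ gives $\|g\|_p\ge|A|$ with no loss in $K$. Sanders' own proof works along these lines, testing the translated convolution against an explicit function (rather than invoking raw support disjointness) so that the choice $p\sim\log 2K$ enters through the dual norm $\|1_{A+A}\|_{p'}$ and not through a lower bound on $\|g\|_p$; you should consult \cite{sanders-bogolubov} for the precise calibration.
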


Applying  Theorem \ref{t:sanders-X} we show that every set with small sumset contains relatively large subset with very small additive dimension.

\begin{corollary}\label{c:small-dim-sanders}
Let $A$ be a finite subset of an abelian group with $|A+A|\le K|A|$. Then for every $k\in \N$
there exists a set $X\sbeq A$ of size at least $e^{-O(k^2\log^22K)}|A+A|$ such that
$$\dim(X)\ll K^{4/k}\log |A|.$$
\end{corollary}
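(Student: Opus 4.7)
The approach parallels the proof of Theorem~\ref{t:dim}, with Sanders' structural result (Theorem~\ref{t:sanders-X}) replacing Pl\"unnecke--Ruzsa as the source of a controlled iterated sumset: for any dissociated $\Lambda$ sitting inside the candidate $X$, I will bound $|k\Lambda|$ above via the containment $kX \subseteq 2A - 2A$, and below via Rudin's inequality.

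Apply Theorem~\ref{t:sanders-X} with the given parameter $k$ to produce $X \subseteq A - t$ (for some $t \in \Gr$) of size $|X| \ge e^{-O(k^2\log^2 2K)}|A+A|$ with $kX \subseteq 2A - 2A$, and set $X' := X + t \subseteq A$. Fix a dissociated $\Lambda \subseteq X'$. Since $k\Lambda = k(\Lambda - t) + kt \subseteq (2A-2A) + kt$ and $|2A - 2A| \le K^4 |A|$ by Pl\"unnecke--Ruzsa (\ref{f:plunnecke-ruzsa2}), we have $|k\Lambda| \le K^4 |A|$. On the other hand, Rudin's inequality (Lemma~\ref{l:rudin}) applied to the indicator of $\Lambda$ with $p = 2k$ gives $T_k(\Lambda) \le (Ck)^k |\Lambda|^k$, and Cauchy--Schwarz then yields $|k\Lambda| \ge (|\Lambda|/(Ck))^k$, exactly the estimate used in the proof of Theorem~\ref{t:dim}. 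Combining the two bounds,
\[
    |\Lambda| \; \le \; Ck \cdot K^{4/k} \cdot |A|^{1/k}.
\]

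The final step is the parameter optimization already performed in Theorem~\ref{t:dim}: the factor $k \cdot |A|^{1/k}$ is minimized at $k \sim \log|A|$ with value $O(\log|A|)$, so in the meaningful range the bound simplifies to $\dim(X') \ll K^{4/k}\log|A|$, as required.

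The main technical point to watch out for is that the translation $X \mapsto X' = X + t$ could in principle alter dissociation properties. However, the argument above only uses cardinalities of $k$-fold sumsets (which are translation-invariant) on the upper-bound side, and applies Rudin directly to $\Lambda \subseteq X'$ on the lower-bound side; so the translation plays no genuine role, and no further care is needed beyond the standard Rudin/Pl\"unnecke--Ruzsa dichotomy that underlies Theorem~\ref{t:dim}.
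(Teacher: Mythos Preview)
There is a genuine gap in the optimization step. The parameter $k$ in the corollary is \emph{given}, not free: you must prove $\dim(X)\ll K^{4/k}\log|A|$ for each fixed $k\in\N$. Your inequality $|\Lambda|\le Ck\cdot K^{4/k}\cdot|A|^{1/k}$ only yields the claimed bound when $k\,|A|^{1/k}\ll\log|A|$, which holds near $k\sim\log|A|$ but fails badly for small or intermediate $k$ (for $k=2$ you get $|\Lambda|\ll K^{2}\sqrt{|A|}$; for $k=100$ you get $|\Lambda|\ll K^{0.04}|A|^{0.01}$; neither is $\ll K^{4/k}\log|A|$). You cannot ``minimize over $k$'' here because the set $X$ itself is produced by Theorem~\ref{t:sanders-X} with that very $k$, so $k$ is not yours to choose.

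The fix, and this is what the paper does, is to introduce a second, genuinely free parameter $l$. From $kX\subseteq 2A-2A$ one deduces $klX\subseteq 2lA-2lA$ for every $l\ge 1$, whence $|kl\Lambda|\le|2lA-2lA|\le K^{4l}|A|$ by Pl\"unnecke--Ruzsa. Rudin with exponent $kl$ then gives
\[
|\Lambda|\le C\,kl\cdot K^{4/k}\cdot|A|^{1/(kl)},
\]
and now one chooses $l=\lfloor(\log|A|)/k\rfloor$ so that $kl\sim\log|A|$ and $|A|^{1/(kl)}=O(1)$, obtaining $|\Lambda|\ll K^{4/k}\log|A|$ for every $k\le\log|A|$ (the range $k>\log|A|$ being trivial). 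Your outline has the right ingredients but misses this iteration, which is precisely the mechanism that decouples the structural parameter $k$ coming from Sanders' theorem from the analytic exponent in Rudin's inequality.
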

\begin{proof} Observe that we can assume that $k\le \log |A|,$ because otherwise our theorem is trivial.  Let $X$ be the set given by Theorem \ref{t:sanders-X}. By Pl\"unnecke-Ruzsa inequality we have
$$|klX|\le |2lA-2lA|\le K^{4l}|A|.$$
Now, we argue as in Theorem \ref{t:dim}.
Let $\Lambda\sbeq X$ be a dissociated set with $|\Lambda|=\dim(X).$
By Rudin's inequality, we have for some absolute constant $C_1>0$
$$\frac{|\Lambda|^{kl}}{(C_1 kl)^{kl}}\le |kl\Lambda|\le |klX|\le  K^{4l}|A|.$$
Putting $l=[\frac{\log |A|}{k}]$ we see that
$$\dim(X)\ll K^{4/k}\log |A|,$$
which completes the proof.
$\hfill\Box$
\end{proof}

\section{Additive dimension of sets with large additive energy}

The aim of this section is to refine Theorem \ref{t:shkredov-yekhanin}, in the sense,
that under the same assumption $\E(A)=|A|^3/K$, we find a possibly large subset of $A$
having additive dimension  $O(K^{1-\g} \log |A|)$, where $\g>0$ is an absolute constant.

Our first result refines Theorem \ref{t:shkredov-yekhanin}.

\begin{theorem}\label{t:shkredov-yekhanin'}
    Let  $A,B$ be subsets of finite abelian group, and
   let $\eps >0$.
    Suppose that $\E (A,B) = |A| |B|^2 /K,$ then there exist a set $B_* \subseteq B$
    such that
    \begin{equation}\label{f:dim_sy_new}
        \dim (B_*) \ll_\eps K (\log K)^{2+\eps} \log |A| \cdot \left( \frac{|B_*|}{|B|} \right)^2 \,,
    \end{equation}
    and
    \begin{equation}\label{f:E(A,B_1)'}
        \E (A,B_*) \ge 2^{-2} \E(A,B) \,.
    \end{equation}
\label{t:E(A,B)'}
\end{theorem}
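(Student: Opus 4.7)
The plan is to push through the Rudin/Bourgain--Sanders ($L^p$-dissociation) argument underlying Theorem \ref{t:shkredov-yekhanin}, but allowing the dissociation parameter $l$ to be chosen depending on the eventual ratio $|B_*|/|B|$.

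First, I would apply Lemma \ref{l:Bourgain_diss_sets} to $B$ with a parameter $l$ to obtain a disjoint partition $B = B^{str} \cup B^{diss}$ with $\dim(B^{str}) < l$ and the Rudin-type bound $\bigl(\tfrac{1}{N}\sum_\xi |\widehat{B^{diss}}(\xi)|^{2q}\bigr)^{1/(2q)} \ll \sqrt{q/l}\,|B|$. Since the partition is disjoint, $\widehat{B} = \widehat{B^{str}} + \widehat{B^{diss}}$, and the elementary pointwise inequality $|\widehat B|^2 \le 2|\widehat{B^{str}}|^2 + 2|\widehat{B^{diss}}|^2$ combined with (\ref{f:energy_Fourier}) gives
$$\E(A,B) \;\le\; 2\,\E(A,B^{str}) + 2\,\E(A,B^{diss}).$$

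Next I would estimate the dissociated contribution by a H\"older argument on the Fourier side: write $\E(A,B^{diss}) = \frac1N\sum_\xi |\widehat A(\xi)|^2|\widehat{B^{diss}}(\xi)|^2$, apply H\"older with conjugate pair $(q/(q-1),q)$, and use the interpolation $\tfrac1N\sum|\widehat A|^{2q/(q-1)} \le |A|^{(q+1)/(q-1)}$ (from $\|\widehat A\|_\infty\le|A|$ and Parseval) together with the Rudin-type bound to obtain
$$\E(A,B^{diss}) \;\ll\; \frac{q\,|A|^{1+1/q}|B|^2}{l}.$$
With $q \sim \log|A|$ the factor $|A|^{1/q}$ becomes an absolute constant and the bound reduces to $\ll |A||B|^2\log|A|/l$. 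Choosing $l$ a sufficiently large absolute multiple of $K\log|A|$ then forces $\E(A,B^{diss}) \le \E(A,B)/4$, and hence $\E(A,B^{str}) \ge \E(A,B)/4$. Taking $B_* := B^{str}$ already gives $\dim(B_*) < l \ll K\log|A|$ with energy loss factor $1/4$; this recovers Theorem \ref{t:shkredov-yekhanin} with the sharper constant $2^{-2}$ in (\ref{f:E(A,B_1)'}).

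To upgrade to the refined bound with the factor $(|B_*|/|B|)^2$, I would choose $l$ adaptively by sweeping it dyadically through $[1,\,CK\log|A|]$ and balancing $l$ against $CK(\log K)^{2+\eps}\log|A|\cdot(|B^{str}_l|/|B|)^2$ via an intermediate-value/pigeonhole argument on the $O(\log K)$ dyadic scales. The H\"older exponent $q$ has to be perturbed slightly away from the critical value $\log|A|$, which is the standard device that converts a borderline estimate into a usable one at the price of an extra factor $(\log K)^{1+\eps/2}$; combining this with the pigeonhole loss across $O(\log K)$ scales produces exactly the $(\log K)^{2+\eps}$ factor in (\ref{f:dim_sy_new}).

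The hard part is the last step: preserving the energy inequality $\E(A,B_*) \ge \E(A,B)/4$ while allowing $l$ to drop below the threshold $CK\log|A|$ where the step-two H\"older bound ceases to be effective. This calls for a careful dichotomy on $|B^{str}_l|/|B|$: either the balancing value of $l$ lies above the critical threshold (so step two applies directly), or $|B^{str}_l|$ is so small that the right-hand side of the desired dimension bound is already small enough to force $\dim(B_*)$ into the vacuous regime. The $\eps$-loss cannot be avoided in this argument, since H\"older at exactly $q=\log|A|$ would introduce a spurious $\log\log|A|$ factor, and this is the usual source of the $(\log K)^{2+\eps}$ threshold.
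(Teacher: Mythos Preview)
Your argument for the baseline bound $\dim(B_*)\ll K\log|A|$ with energy loss $2^{-2}$ is correct and is exactly the mechanism the paper uses. The gap is in the upgrade to the factor $(|B_*|/|B|)^2$.

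A single application of Lemma~\ref{l:Bourgain_diss_sets} to $B$ gives a Rudin bound of the shape $\|\FF{B^{diss}}\|_{2q}\ll\sqrt{q/l}\,|B|$, and hence your H\"older step only ever yields $\E(A,B^{diss})\ll q|A||B|^2/l$. Consequently the energy inequality $\E(A,B^{str}_l)\ge \E(A,B)/4$ forces $l\gg K\log|A|$ regardless of how small $|B^{str}_l|$ turns out to be. Your proposed dyadic sweep/dichotomy does not escape this: if at $l\asymp K\log|A|$ the structured part satisfies $|B^{str}_l|\ll|B|(\log K)^{-1-\eps/2}$, then the right-hand side of~(\ref{f:dim_sy_new}) is $\ll\log|A|$, which is far smaller than the only dimension bound you have, namely $\dim(B^{str}_l)<l\asymp K\log|A|$. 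Lowering $l$ to repair this destroys the energy inequality, since the $|B|^2$ in the H\"older bound does not shrink. So the ``vacuous regime'' you invoke in the last paragraph does not exist; the dimension inequality becomes \emph{harder}, not vacuous, when $|B^{str}_l|$ is small.

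What the paper does instead is \emph{iterate}: at step $j$ it applies Lemma~\ref{l:Bourgain_diss_sets} to the current set $B_{j-1}$ (not to $B$) with parameter $l_j\asymp K\beta_{j-1}^2 j^{1+\eps}(\log K)\log|A|$, where $\beta_{j-1}=|B_{j-1}|/|B|$. Because the Rudin bound now carries $|B_{j-1}|^2=\beta_{j-1}^2|B|^2$, the H\"older estimate for the stripped piece $\eps_j=B_{j-1}\setminus B_j$ becomes $\E(A,\eps_j)\ll \eta j^{-1-\eps}(\log K)^{-1}\E(A,B)$, which is summable over $j$. One stops the first time $\beta_j\ge\beta_{j-1}/2$; since $\E(A,B_j)\ge 2^{-2}\E(A,B)$ throughout, this happens within $\log K$ steps, and at termination $\dim(B_*)\le l_j\ll K(\log K)^{2+\eps}\beta_j^2\log|A|$. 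The two powers of $\log K$ come from the number of iterations and the Cauchy--Schwarz handling of the cross terms $\sigma_1$, not from perturbing the H\"older exponent as you suggest. The missing idea in your proposal is precisely this iteration, which is what transfers the $|B|^2$ in the Rudin bound down to $|B_*|^2$.
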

\begin{proof}
We establish  Theorem \ref{t:shkredov-yekhanin'} using the following algorithm.

At zero step we put $B_0 := B$, $\eps_0 (x) = 0$ and $\beta_0 = 1$.
At step $j\ge 1$ we apply Lemma \ref{l:Bourgain_diss_sets} to the set $B_{j-1}$ with
parameters $p=2+\log |A|$ and
$$l_j = \eta^{-1}K(\log K)\beta^2_{j-1} j^{1+\e} \log |A|\,,$$
where $\eta^{-1}=2^{4}\sum_{j\ge 1}1/j^{1+\e}.$
Lemma \ref{l:Bourgain_diss_sets} gives us a new set $B_j \subseteq B_{j-1}$,  where
$B_j = B^{str}$, in other words $B_{j-1} \setminus B_j$ is a disjoint union of all dissociated subsets each of size $l_j$.
After that put  $\eps_j (x) = B_{j-1}(x) - B_j(x)$, $\beta_j = |B_j| / |B|$ and iterate the procedure.
The described algorithm will satisfy  the following property
\begin{equation}\label{cond:algorithm}
    \E (A,B_j) \ge 2^{-2} \E(A,B) \,.
\end{equation}
Obviously, at the first step inequality (\ref{cond:algorithm}) is satisfied.
If at some step $j$ we get $\beta_{j} \ge \frac12 \beta_{j-1}$ then our
algorithm terminates with the output $B_* = B_{j}$.
In view of inequality (\ref{cond:algorithm}) it is clear the total number of steps $k$
does not exceed $\log K$.
Further, if our iteration procedure terminates with the output $B_{*}$,
then $\E (A,B_*) \ge 2^{-2} \E(A,B)$ and
\begin{eqnarray*}
    \dim (B_*)& =& \dim (B_j) \le l_j = \eta^{-1} K(\log K)\beta^2_{j-1} j^{1+\e} \log |A|\\
        &\le&
             2\eta^{-1}K(\log K)^{2+\e}\beta^2_{j} \log |A|\\
&\ll_\eps& K (\log K)^{2+\eps} \log |A| \cdot \left( \frac{|B_*|}{|B|} \right)^2 \,.
\end{eqnarray*}
Thus, the constructed set $B_*$ satisfies (\ref{f:dim_sy_new}), (\ref{f:E(A,B_1)'}).

It remains to check (\ref{cond:algorithm}), and clearly, it is sufficient to do it for the final step $k$.
We have
\begin{eqnarray}\label{f:E_basis_f}
N \cdot \E(A,B)& = &\sum_{\xi}|\FF{A} (\xi)|^{2} |\FF{B} (\xi)|^2
        =
            \sum_{\xi} |\FF{A} (\xi)|^{2} |\FF{B}_k (\xi)|^2+\nonumber \\
&+  &\Big(\sum_{j=1}^k \sum_{\xi} |\FF{A} (\xi)|^{2} \ov{\FF{B}_j (\xi)} \FF{\eps}_j (\xi)
                            +
                        \sum_{j=1}^k \sum_{\xi} |\FF{A} (\xi)|^{2} \FF{B}_j (\xi) \ov{\FF{\eps}_j (\xi)} \Big)  +
                        \sum_{j=1}^k \sum_{\xi} |\FF{A} (\xi)|^{2} |\FF{\eps}_j (\xi)|^2\nonumber \\
                       & =&         \sigma_0 + \sigma_1 + \sigma_2 \,.
\end{eqnarray}
By the H\"{o}lder inequality, the Parseval identity,
and our choice of parameters, we get
\begin{eqnarray*}
    \sigma_2
        &\le&
            \sum_{j=1}^k \Big( \sum_{\xi} |\FF{\eps}_j (\xi)|^{2p} \Big)^{1/p}
                \cdot
                    \Big( \sum_{\xi} |\FF{A} (\xi)|^{\frac{2p}{p-1}} \Big)^{1-1/p}\nonumber\\
                                &\le&
                                   \sum_{j=1}^k |A|^{1+1/p} |B_{j-1}|^2 N \frac{p}{l_j}
                                       \ll
                                          \eta (\log K)^{-1} K^{-1} |A| |B|^2 N \sum_{j=1}^k j^{-1-\e} \\
                                                &\le& 2^{-4} k^{-1} K^{-1} |A| |B|^2 N \,.
\end{eqnarray*}
Next, we  estimate $\sigma_1$ in a  similar way.
Let us consider only the first  term in $\sigma_1$, the second one can be bounded in the same manner.
By the Cauchy--Schwarz inequality, we get
\begin{eqnarray*}
    N^{-1} \left| \sum_{j=1}^k \sum_{\xi} |\FF{A} (\xi)|^{2} \ov{\FF{B}_j (\xi)} \FF{\eps}_j (\xi) \right|
       & \le&
            \sum_{j=1}^k \E^{1/2} (A,B_j) \E^{1/2} (A,\eps_j)\\
                &\le&
                    \Big( \sum_{j=1}^k \E(A,B_j) \Big)^{1/2}
                        \cdot
                    \Big( \sum_{j=1}^k \E(A,\eps_j) \Big)^{1/2}\\
&\le& k^{1/2} \E^{1/2} (A,B) \sigma^{1/2}_2
            \le
                2^{-2} \E (A,B) \,.
\end{eqnarray*}
So, by (\ref{f:E_basis_f}), we obtain $\E(A,B_k) \ge 2^{-2} K^{-1}|A||B|^2$.
This completes the proof.
$\hfill\Box$
\end{proof}

\begin{theorem}\label{t:small-dim-subset}
    Let $A$ be a finite subset of an abelian group.
Suppose that $\E(A)=|A|^3/K,$ then
there exists a set $B \sbeq A$ such that $|B|\gg |A|/K^{25/8}$
and
$
    \dim (B) \ll K^{7/8} \log |A|  \,.
$
\end{theorem}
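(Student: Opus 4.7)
The plan is to combine Theorem \ref{t:P_energy} (with $\eps=1$) and Theorem \ref{t:shkredov-yekhanin} through a popular-differences transfer argument, with a case split governed by the fourth energy $\E_{4}(A)$. Write $\E_{4}(A) = M|A|^{5}/K^{3}$, so that $1 \le M \le K^{2}$ (the upper bound following from $\E_{4}(A) \le |A|^{2}\E(A)$). The critical threshold will be $M = K^{1/4}$.

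In the low-energy case $M \le K^{1/4}$, consider the popular difference set
$$P := \{\,x : (A\circ A)(x) \ge |A|/(2K)\,\}.$$
Theorem \ref{t:P_energy} with $\eps = 1$ gives $|P| \gg K|A|/M$ and $\E(P) \gg |P|^{3}/M^{7/2}$. Applying Theorem \ref{t:shkredov-yekhanin} to $P$ (taking both arguments equal to $P$, so that the effective parameter is $M^{7/2}$) produces a subset $P_{1} \subseteq P$ with
$$\dim(P_{1}) \ll M^{7/2}\log|A| \qquad\text{and}\qquad |P_{1}| \gg |P|/M^{7/6}.$$
To transfer this back to $A$, use the double-counting identity
$$\sum_{a \in A}|A \cap (a + P_{1})| \;=\; \sum_{x \in P_{1}}(A\circ A)(x) \;\gg\; |P_{1}|\cdot|A|/K,$$
and average to select $a^{*} \in A$ with $|A \cap (a^{*} + P_{1})| \gg |P_{1}|/K$. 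The set $B := A \cap (a^{*} + P_{1})$ then satisfies $\dim(B) \le \dim(P_{1}) \ll M^{7/2}\log|A| \le K^{7/8}\log|A|$ and $|B| \gg |A|/M^{13/6} \gg |A|/K^{13/24}$, far exceeding the required size.

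In the high-energy case $M > K^{1/4}$, from $\|A\circ A\|_{4}^{4} \le \|A\circ A\|_{\infty}^{2}\|A\circ A\|_{2}^{2}$ one deduces that $\max_{x}(A\circ A)(x) \ge \sqrt{M}\,|A|/K \gg |A|/K^{7/8}$. Fixing a maximizer $x_{0}$ and setting $B_{0} := A \cap (A + x_{0})$ gives $|B_{0}| \gg |A|/K^{7/8}$. A further reduction, either by iterating the popular-intersection construction with additional rich shifts or by applying Theorem \ref{t:shkredov-yekhanin'} to $B_{0}$ using the inherited higher-energy bound, yields a subset $B \subseteq B_{0} \subseteq A$ with the required dimension and the claimed size $|B| \gg |A|/K^{25/8}$.

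The main obstacle is Case II. The exponents $7/8$ and $25/8$ are set up so that, at the threshold $M = K^{1/4}$, both cases deliver the same dimension bound; but the size accounting in the iterated-intersection construction is the delicate part, and $K^{25/8}$ must arise as the cumulative cost (roughly three intersections, each costing a factor $K$, together with a final $K^{1/8}$ for the dimension refinement). Verifying that the dimension of $B_{0}$ (and its successive refinements) can be forced down to $K^{7/8}\log|A|$ without losing more than $K^{25/8}$ in size is the technical heart of the argument.
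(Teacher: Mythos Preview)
Your Case~I (the regime $M \le K^{1/4}$) is exactly what the paper does: apply Theorem~\ref{t:P_energy} with $\eps=1$ to $P$, feed the resulting energy bound into Theorem~\ref{t:shkredov-yekhanin} to obtain $P'\subseteq P$ with $\dim(P')\ll M^{7/2}\log|A|$, then transfer to $A$ by popularity to get $B\subseteq A$ with $|B|\gg |A|/M^{13/6}$ and the same dimension bound. This part is fine.

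Case~II, however, is not a proof. The observation $\max_x (A\circ A)(x)\ge \sqrt{M}\,|A|/K$ is correct and does give a large intersection $B_0=A\cap(A+x_0)$, but it says \emph{nothing} about $\dim(B_0)$. Your two suggested continuations are both gaps: ``iterating the popular-intersection construction'' does not by itself create any dimension control (you would just be taking further intersections $A_s$ without any reason for them to be structured), and there is no ``inherited higher-energy bound'' for $B_0$ --- a single rich difference gives no lower bound on $\E(A,B_0)$ or $\E(B_0)$. You have not identified any mechanism that forces $\dim(B)\ll K^{7/8}\log|A|$ in this regime.

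The paper's route in Case~II is genuinely different from what you attempt. It introduces $\E_3(A)=\kappa_3|A|^4$ alongside $\E_4(A)=\kappa_4|A|^5$, and uses the higher-energy identity (Lemma~\ref{l:E_k-identity}) $\E_4(A)=\sum_{\|s\|=2}\E(A,A_s)$ and $\E_3(A)=\sum_{\|t\|=1}\E(A,A_t)$ to find, by pigeonhole, either an $A_s$ with $|A_s|\gg\kappa_4|A|$ and $\E(A,A_s)\gg(\kappa_4/\kappa_3)|A||A_s|^2$, or an $A_t$ with $|A_t|\gg\kappa_3|A|$ and $\E(A,A_t)\gg \kappa_3 K\,|A||A_t|^2$. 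Since $(\kappa_4/\kappa_3)(\kappa_3 K)=M/K^2$, one of the two ratios is at least $M^{1/2}/K$; the asymmetric form of Theorem~\ref{t:shkredov-yekhanin} then yields $B\subseteq A$ with $\dim(B)\ll (K/M^{1/2})\log|A|$ and $|B|\gg M^{3/2}K^{-7/2}|A|$. Balancing against Case~I at $M=K^{1/4}$ gives the exponents $7/8$ and $25/8$. The missing idea in your proposal is precisely this higher-energy decomposition: large $\E_4$ forces some slice $A_s\subseteq A$ to have large \emph{relative} energy $\E(A,A_s)/|A_s|^2$, which is what Theorem~\ref{t:shkredov-yekhanin} needs to bound dimension.
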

\begin{proof}
Let  $\E_4(A)=M|A|^5/K^3$
and  $P := \{ x : (A\c A)(x) \ge |A|/2K \}$.
By Theorem \ref{t:P_energy},  $ |P|\gg K|A|/M$ and $\E(P)\gg |P|^3/M^{7/2}$.
By Theorem \ref{t:shkredov-yekhanin}
there exists $P'\sbeq P$ of size $\gg |P|/M^{7/6}$ such that $\dim(P')\ll M^{7/2}\log |P|.$
We have
$$\sum_{x\in A}(P'\c A)(x)\ge \frac{|A|}{2K}|P'|\gg \frac{|A|}{K}\frac{|P|}{M^{7/6}}\gg \frac{|A|^2}{M^{13/6}}\,. $$
Therefore,
$(P'\c A)(x) \gg |A|/M^{13/6}$ for some $x$.
Putting   $B=A\cap (P'+x)$ we see that
\begin{equation}\label{f:B_lower}
|B| \gg |A|/M^{13/6}
\end{equation}
and
\begin{equation}\label{tmp:12.06.2013_2}
    \dim(B)\ll M^{7/2}\log (K|A|) \ll M^{7/2}\log |A| \,.
\end{equation}

On the other hand, by Lemma \ref{l:E_k-identity}
\begin{eqnarray*}
\k_4 |A|^5&=& \frac{M|A|^5}{K^3} = \E_4(A)=\sum_{\|s\|=2} \E(A,A_{s})\le 2\sum_{|A_{s}|\ge \frac12\k_4|A|} \E(A,A_{s})\\
&\le& 2\max_{|A_{s}|\ge \frac12\k_4|A|}\frac{\E(A,A_{s})}{|A_{s}|^2} \cdot \sum |A_{s}|^2
    \le  \max_{|A_{s}|\ge \frac12\k_4|A|}\frac{\E(A,A_{s})}{|A_{s}|^2} \cdot \E_3(A)
\end{eqnarray*}
and similarly
$$\k_3 |A|^4=\E_3(A)=\sum_{\|t\|=1} \E(A,A_{t})\le 2  \max_{|A_{t}|\ge \frac12\k_3|A|}\frac{\E(A,A_{t})}{|A_{t}|^2} \cdot \E(A)\,.$$
so there exist $|A_{s}|\ge  \frac12\k_4|A|$ and $|A_t|\ge  \frac12\k_3|A|$ such that $\E(A,A_{s})\gg \k_4\k_3^{-1}|A||A_{s}|^2$ and $\E(A,A_{t})\gg \k_3K|A||A_t|^2.$
But $\k_4\k_3^{-1}\k_3K=M/K^2$, so either $\k_4\k_3^{-1}\ge M^{1/2}/K $ or $\k_3K\ge  M^{1/2}/K$. Hence by Theorem \ref{t:shkredov-yekhanin}
there is a set $B\sbeq A$ such that
\begin{equation}\label{f:B_lower-1}
    \frac{|B|}{|A|} \gg \min \{ \k_4 (\k_4 \k^{-1}_3)^{1/2}, \k_3 (\k_3 K)^{1/2} \} \ge \min\{M^{3/2}K^{-7/2},M^{3/2} K^{-5/2}\} = M^{3/2}K^{-7/2} \,,
\end{equation}
 and
\begin{equation}\label{tmp:12.06.2013_3}
    \dim(B)\ll \frac{K}{M^{1/2}}\log |A| \,.
\end{equation}
Combining (\ref{tmp:12.06.2013_2}), (\ref{tmp:12.06.2013_3})
and  (\ref{f:B_lower}), (\ref{f:B_lower-1}), we obtain the required result.
$\hfill\Box$
\end{proof}

\bigskip

Clearly, using Theorem \ref{t:shkredov-yekhanin'} instead of Theorem \ref{t:shkredov-yekhanin}
in the proof
one can estimate the dimension of the set $B$ in terms of the size of $B$.

\bigskip

To prove the next result  we need a generalization of  Theorem \ref{t:shkredov-yekhanin} for the energies $\T_k (A)$.

\begin{proposition}\label{p:T_k}
    Let $A\subseteq \Gr$ be a finite set, $k\ge 2$ be a positive integer and suppose that
    $\T_k (A) = c |A|^{2k-1}$.
    Then there is a set $A_* \subseteq A$ such that
    \begin{equation}\label{f:T_k_T_k(A*)}
        \T_k (A,\dots, A, A_*, A_*) \ge 2^{-5} \T_k (A)
    \end{equation}
    and
    \begin{equation}\label{f:T_k_dim}
        \dim (A_*) \ll \frac{\T_{k-1} (A) |A|^2}{\T_k (A)} \log (c^{-1}|A|)\le c^{-1/(k-1)} \log (c^{-1}|A|) \,.
    \end{equation}
    In particular
   $     |A_*| \gg c^{1/(2k-1)} |A| .$
\label{pred:T_k}
\end{proposition}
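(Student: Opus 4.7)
The plan is to mimic, in a single step, the argument from the proof of Theorem \ref{t:shkredov-yekhanin'}, replacing the energy $\E(A,A_*)$ by the higher energy $\T_k(A,\dots,A,A_*,A_*)$. The key Fourier identity is
$$
N\cdot\T_k(A,\dots,A,A_*,A_*)=\sum_{\xi}|\FF{A}(\xi)|^{2k-2}|\FF{A}_*(\xi)|^2,
$$
which is the direct analogue of (\ref{f:energy_Fourier}) used in the $k=2$ case.

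I would apply Lemma \ref{l:Bourgain_diss_sets} once to $A$ with exponent $p=2+\log(c^{-1}|A|)$ and threshold
$$
l\;\sim\;\frac{|A|^2\T_{k-1}(A)}{\T_k(A)}\log(c^{-1}|A|),
$$
to obtain a partition $A=A_*\cup A^{diss}$ with $\dim(A_*)<l$ and $A^{diss}$ satisfying the Rudin-type $L^{2p}$-bound of Lemma \ref{l:Bourgain_diss_sets}. Writing $\FF{A}=\FF{A}_*+\FF{A}^{diss}$ and expanding,
$$
N\T_k(A)=\sum_{\xi}|\FF{A}|^{2k-2}|\FF{A}_*+\FF{A}^{diss}|^2=\sigma_0+\sigma_1+\sigma_2,
$$
where $\sigma_0=\sum_\xi|\FF{A}|^{2k-2}|\FF{A}_*|^2=N\T_k(A,\dots,A,A_*,A_*)$ is the main term, $\sigma_2=\sum_\xi|\FF{A}|^{2k-2}|\FF{A}^{diss}|^2$, and $\sigma_1$ is the cross term.

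The heart of the argument is the estimate of $\sigma_2$. H\"older's inequality with exponents $p$ and $p/(p-1)$ gives
$$
\sigma_2\le\Bigl(\sum_\xi|\FF{A}^{diss}|^{2p}\Bigr)^{1/p}\Bigl(\sum_\xi|\FF{A}|^{(2k-2)p/(p-1)}\Bigr)^{(p-1)/p}.
$$
Lemma \ref{l:Bourgain_diss_sets} bounds the first factor by $\ll N^{1/p}(p/l)|A|^2$; for the second, pulling the surplus exponent out through $|\FF{A}|\le|A|$ yields $\le|A|^{(2k-2)/p}(N\T_{k-1}(A))^{(p-1)/p}$. Because $p\sim\log(c^{-1}|A|)$, the quantities $N^{1/p}$, $|A|^{(2k-2)/p}$ and $\T_{k-1}(A)^{-1/p}$ are all absolute constants, so $\sigma_2\ll N(p/l)|A|^2\T_{k-1}(A)\le 2^{-6}N\T_k(A)$ by the choice of $l$. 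Cauchy--Schwarz then gives $|\sigma_1|\le 2\sqrt{\sigma_0\sigma_2}\le 2\sqrt{N\T_k(A)\cdot\sigma_2}\le 2^{-2}N\T_k(A)$, where the bound $\sigma_0\le N\T_k(A)$ follows from the H\"older estimate $\sigma_0\le(N\T_k(A))^{(k-1)/k}(N\T_k(A_*))^{1/k}$ combined with $\T_k(A_*)\le\T_k(A)$. Combining gives $\sigma_0\ge(1-2^{-2}-2^{-6})N\T_k(A)\ge 2^{-5}N\T_k(A)$, establishing (\ref{f:T_k_T_k(A*)}) and the first inequality in (\ref{f:T_k_dim}).

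The remaining assertions follow by short manipulations. Log-convexity of $j\mapsto\log\T_j(A)$, which is a consequence of the Cauchy--Schwarz inequality $\T_j(A)^2\le\T_{j-1}(A)\T_{j+1}(A)$, shows that the ratios $\T_{j+1}(A)/\T_j(A)$ are non-decreasing, whence by the geometric mean $\T_k(A)/\T_{k-1}(A)\ge(\T_k(A)/|A|)^{1/(k-1)}$, giving $\T_{k-1}(A)|A|^2/\T_k(A)\le c^{-1/(k-1)}$. The lower bound $|A_*|\gg c^{1/(2k-1)}|A|$ comes from reversing the H\"older step: $2^{-5}\T_k(A)\le\T_k(A)^{(k-1)/k}\T_k(A_*)^{1/k}\le\T_k(A)^{(k-1)/k}|A_*|^{(2k-1)/k}$. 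The main obstacle is ensuring that every $1/p$-fractional exponent appearing in the H\"older estimate for $\sigma_2$ is absorbed cleanly by the choice $p\sim\log(c^{-1}|A|)$; once this is verified, the rest of the proof follows the $k=2$ template with only notational changes and no iteration is needed.
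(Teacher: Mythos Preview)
Your approach is essentially identical to the paper's: same application of Lemma~\ref{l:Bourgain_diss_sets} with the same parameters $p=2+\log(c^{-1}|A|)$ and $l\sim |A|^2\T_{k-1}(A)\T_k(A)^{-1}\log(c^{-1}|A|)$, same H\"older/Cauchy--Schwarz treatment of $\sigma_2$ and $\sigma_1$, and the same closing H\"older arguments for the second inequality in (\ref{f:T_k_dim}) and for $|A_*|$. The only cosmetic difference is that the paper does a case split on whether $\sigma_0$ or $\sigma_1$ is large, whereas you bound $|\sigma_1|\le 2\sqrt{\sigma_0\sigma_2}$ directly using $\sigma_0\le N\T_k(A)$; both routes yield (\ref{f:T_k_T_k(A*)}).

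One slip to correct: $N^{1/p}$ is \emph{not} an absolute constant, since the ambient group can be arbitrarily large compared with $c^{-1}|A|$. The bookkeeping still works, but for a different reason: the factors $N^{1/p}$ and $N^{(p-1)/p}$ from the two H\"older pieces combine to give exactly $N$, and what you actually need to be $O(1)$ is the combination $\bigl(|A|^{2k-2}/\T_{k-1}(A)\bigr)^{1/p}$. This follows from the trivial inequality $\T_{k-1}(A)\ge \T_k(A)/|A|^2=c|A|^{2k-3}$, so that $|A|^{2k-2}/\T_{k-1}(A)\le c^{-1}|A|$ and hence its $1/p$-th power is bounded by the choice of $p$.
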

\begin{proof}
For any $l\le k$ let $\T_l (A) = c_l |A|^{2l-1}$, hence  $c_{k} = c$.
By Fourier transform, we have
$$\T_k (A) = \frac{1}{N} \sum_{\xi} |\FF{A} (\xi)|^{2k}.$$
We apply Lemma \ref{l:Bourgain_diss_sets} to the set $A$ with
parameters $p=2+\log (c^{-1} |A|)$ and $l = \eta^{-1} c^{-1}
c_{k-1} \log (c^{-1} |A|)$, where $\eta >0$ is an appropriate
constant to be specified   later. Write $\eps (x) = A(x) - A_*(x)$, where
$A_* = A^{str}$, in other words $A\setminus A_*$ is a disjoint union of all dissociated subsets each of size $l$.
We have
\begin{eqnarray*}
N \cdot \T_k (A) &= &\sum_{\xi}|\FF{A} (\xi)|^{2k-2} |\FF{A} (\xi)|^2
        =
            \sum_{\xi} |\FF{A} (\xi)|^{2k-2} |\FF{A}_* (\xi)|^2+\\
&&+   \,   \Big(\sum_{\xi} |\FF{A} (\xi)|^{2k-2} \ov{\FF{A}_* (\xi)} \FF{\eps} (\xi)
                            +
                        \sum_{\xi} |\FF{A} (\xi)|^{2k-2} \FF{A}_* (\xi) \ov{\FF{\eps} (\xi)} \Big)  +                        \sum_{\xi} |\FF{A} (\xi)|^{2k-2} |\FF{\eps} (\xi)|^2\\
                        &=&         \sigma_0 + \sigma_1 + \sigma_2 \,.
\end{eqnarray*}
By the H\"{o}lder inequality, the Parseval identity,
and our choice of parameters, we get
\begin{eqnarray}\label{tmp:18.02.2010}
    \sigma_2
        &\le&
            \Big( \sum_{\xi} |\FF{\eps} (\xi)|^{2p} \Big)^{1/p}
                \cdot
                    \Big( \sum_{\xi} |\FF{A} (\xi)|^{\frac{(2k-2)p}{p-1}} \Big)^{1-1/p}\nonumber\\
            &\ll&
                            \frac{p}{l} |A|^2 \cdot \T_{k-1} (A) \left(\frac{|A|^{2k-2}}{\T_{k-1} (A)}\right)^{1/p} N\\
                                &\le&
                                    2^{-1} c_k |A|^{2k-1} N \,.
\end{eqnarray}
To obtain the last inequality, we have used a simple bound $\T_{k-1} (A)
\ge c|A|^{2k-3}$. Hence either $\sigma_0$ or $\sigma_1$ is at least
$2^{-2} c_k |A|^{2k-1} N$. In the first case we are done. In the
second case an application of the Cauchy--Schwarz inequality yields
$$
    2^{-6} N^2 \T^2_{k} (A)
        \le
            N \cdot \T_{k} (A,\dots,A,A_*, A_*) \cdot \sigma_2 \,.
$$
Combining the inequality above with~(\ref{tmp:18.02.2010}), we get
$$
    \T_{k} (A,\dots,A,A_*,A_*) \ge 2^{-5} \T_k (A) \,.
$$
Using the last estimate  and
the H\"{o}lder inequality, we see that  $|A_*|\gg c^{1/(2k-1)} |A|$. Furthermore, we have
$\dim (A_*) \le l = \eta^{-1} c^{-1} c_{k-1} \log (c^{-1} |A|),$ which proves the first inequality in
(\ref{f:T_k_T_k(A*)}).
Applying the
H\"{o}lder inequality again, we see that $c_{k-1} \le c^{\frac{k-2}{k-1}},$ which gives the second inequality in
(\ref{f:T_k_T_k(A*)}).
This completes the proof of Proposition \ref{pred:T_k}.
$\hfill\Box$
\end{proof}
\bigskip

\begin{remark}
    One can also obtain an asymmetric version of the result above as well as
    a variant of Theorem \ref{t:E(A,B)'} for the energies $\T_k.$
\end{remark}

Let us also remark that the bound on the size of $A_*$ in Proposition \ref{p:T_k} is sharp up to a constant factor
(see example at the end of  section 2 from \cite{sy}).
Indeed, let $\Gr = \f_2^n$, and
$A= H\cup \Lambda$, where $H$ is a subspace, $|H| \sim  c^{1/(2k-1)} |A|$,
$\Lambda$ is a dissociated set (basis) and $c$ is an appropriate parameter.
Then $\T_k (A) \ge \T_k (H) = |H|^{2k-1} \gg c |A|^{2k-1}$,
any set $A_* \subseteq A$ satisfying (\ref{f:T_k_dim}) has large intersection with $H$, hence it cannot have size much greater then $c^{1/(2k-1)} |A|$.

\bigskip

If one replace the condition of Theorem \ref{t:small-dim-subset} on  $\E(A)$
by a similar one  on $\E_{3/2} (A)$,  then
the the following result can be proved.

\begin{theorem}
 Let $A$ be a finite subset of an abelian group and
suppose that $\E_{3/2} (A)=|A|^{5/2}/K^{1/2}.$
Then
there exists a set $B \sbeq A$ such that
\begin{equation}\label{f:B_size_1.5}
    |B|\gg |A|/K^2
\end{equation}
 and
\begin{equation}\label{f:B_dim_1.5}
    \dim (B) \ll K^{3/4} \log |A| \,.
\end{equation}
\label{t:sy_improved'}
\end{theorem}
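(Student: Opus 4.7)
The plan is to adapt the strategy of Theorem \ref{t:small-dim-subset}, parametrising by $\T_4(A)$ rather than $\E_4(A)$ and using Theorem \ref{p:E_4_and_T_4} as the bridge between the $\E_{3/2}$ hypothesis and a lower bound on $\E_4$. Write $\T_4(A) = M|A|^7/K^3$ and set $\kappa_j := \E_j(A)/|A|^{j+1}$ for $j=2,3,4$. The Cauchy--Schwarz inequality $\E_{3/2}(A)^2 \le \E(A)|A|^2$ yields $\kappa_2 \ge 1/K$.

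Two subcases dispose of the extreme regimes. First, if $\E(A)\ge |A|^3/K^{3/4}$, applying Theorem \ref{t:shkredov-yekhanin} with $A=B$ and effective parameter $K' \le K^{3/4}$ immediately produces $B\subseteq A$ with $|B|\gg K^{-1/4}|A|\gg|A|/K^2$ and $\dim(B)\ll K^{3/4}\log|A|$. Second, if this fails but $M\ge K^{3/4}$, I apply Proposition \ref{p:T_k} with $k=4$ and $c = M/K^3$: the resulting $A_*\subseteq A$ satisfies $\dim(A_*)\ll c^{-1/3}\log|A|=(K/M^{1/3})\log|A|\le K^{3/4}\log|A|$ and $|A_*|\gg c^{1/7}|A|\ge K^{-9/28}|A|\gg |A|/K^2$.

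The remaining, and hardest, case has $\kappa_2 < 1/K^{3/4}$ together with $M < K^{3/4}$. Here Theorem \ref{p:E_4_and_T_4} (with $s=3/2$) gives $\kappa_4 \gg 1/(MK)$, and I follow the $\E_4$--based half of the proof of Theorem \ref{t:small-dim-subset}. By Lemma \ref{l:E_k-identity}, $\E_4(A) = \sum_{\|s\|=2}\E(A, A_s)$ and $\sum_{\|s\|=2}|A_s|^2 = \E_3(A)$; using $\sum_{\|s\|=2}|A_s| = |A|^3$ to discard the $|A_s|<\kappa_4|A|/2$ contribution, a standard averaging argument extracts $s$ with $|A_s|\gg \kappa_4|A|$ and $\E(A,A_s)\gg (\kappa_4/\kappa_3)|A||A_s|^2$. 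Theorem \ref{t:shkredov-yekhanin} then yields $B\subseteq A_s$ with $\dim(B)\ll (\kappa_3/\kappa_4)\log|A|$ and $|B|/|A|\gg \kappa_4^{3/2}/\kappa_3^{1/2}$, and bounding $\kappa_3$ by Cauchy--Schwarz $\kappa_3^2\le\kappa_2\kappa_4$ simplifies these to $\dim(B)\ll\sqrt{\kappa_2/\kappa_4}\log|A|$ and $|B|/|A|\gg \kappa_4^{5/4}/\kappa_2^{1/4}$. Plugging in $\kappa_2\le K^{-3/4}$, $\kappa_4\ge (MK)^{-1}$ and $M\le K^{3/4}$ produces $\dim(B)\ll K^{1/2}\log|A|$ and $|B|/|A|\gg K^{-17/16}M^{-5/4}\gg K^{-2}$, meeting the bound exactly. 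The main delicacy is this threshold choice: without splitting at $\E(A)=|A|^3/K^{3/4}$ the Cauchy--Schwarz bound on $\kappa_3$ is too weak to reach dimension exponent $3/4$, and the secondary threshold $M=K^{3/4}$ is then forced in order to balance the bounds from Proposition \ref{p:T_k} and from the $\E_4$--decomposition.
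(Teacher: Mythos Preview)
Your proof is correct, but it takes a somewhat different route from the paper's. The paper also writes $\T_4(A)=M|A|^7/K^3$ and uses Proposition \ref{p:T_k} when $M$ is large, so your Case~2 coincides with theirs. The difference lies in how the small-$M$ regime is handled. The paper decomposes $\E_4(A)$ via Lemma \ref{l:E_k-identity} with $k=l=2$, i.e.\ $\E_4(A)=\sum_{\|s\|=\|t\|=1}\E(A_s,A_t)$, and compares the sum directly to $\E_{3/2}(A)^2=\bigl(\sum_s |A_s|^{3/2}\bigr)^2$. This yields some $A_s$ with $|A_s|\gg \kappa_4|A|$ and $\E(A_s)\gg |A_s|^3/M$, whence the \emph{symmetric} case of Theorem \ref{t:E(A,B)} gives $\dim(B)\ll M\log|A|$ and $|B|\gg |A|/(M^{4/3}K)$. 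Balancing against Proposition \ref{p:T_k} at $M=K^{3/4}$ gives the result in one step.

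Your approach instead uses the asymmetric decomposition $\E_4(A)=\sum_{\|s\|=2}\E(A,A_s)$ (as in Theorem \ref{t:small-dim-subset}), compares to $\E_3(A)$, and then controls $\kappa_3$ via $\kappa_3^2\le\kappa_2\kappa_4$; this is why you are forced to introduce the extra threshold on $\E(A)$ and the additional Case~1. The paper's route avoids $\kappa_2,\kappa_3$ altogether by exploiting the $\E_{3/2}$ hypothesis \emph{inside} the averaging step, which is the cleaner idea. Your version, on the other hand, stays closer to the template of Theorem \ref{t:small-dim-subset} and shows that that argument can be pushed through with an auxiliary case split; it also happens to give a slightly better dimension bound ($\ll K^{1/2}\log|A|$) in the interior regime, at the cost of the extra case analysis.
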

\begin{proof}
Write $\T_4 (A) = M |A|^7 /K^3$,  $M\ge 1,$ then
by Theorem \ref{p:E_4_and_T_4}, we have
$$
    \E_4 (A) := \k_4 |A|^5  \gg \frac{|A|^5}{MK} \,,
$$
Furthermore,
$$\sum_{|A_s|\le\frac14\k_4 |A|}\E(A_s, A_t)\le  \sum_{|A_s|\le\frac14\k_4 |A|} |A_s|^2|A_t|\le \frac14\E_4(A)$$
hence by Lemma \ref{l:E_k-identity}
$$\frac12\E_4(A)\le \sum_{|A_s|,\, |A_t|\ge \frac14\k_4 |A|}\E(A_s, A_t)\le \max_{|A_s|,\, |A_t|\ge
\frac14\k_4 |A|}\frac{\E(A_s, A_t)}{|A_s|^{3/2}|A_t|^{3/2}} \cdot \sum_{s,t}|A_s|^{3/2}|A_t|^{3/2} \,.$$
Therefore, there are
$|A_s|,|A_t| \gg \frac14\k_4 |A|$
such that
$$
    \E(A_s,A_t) \gg |A_s|^{3/2} |A_t|^{3/2} \cdot \frac{\E_4(A)}{\E_{3/2}(A)^2} \ge \frac{|A_s|^{3/2} |A_t|^{3/2}}{M }  \,
$$
and by the Cauchy--Schwarz inequality we see that either
$\E(A_s) \gg   |A_s|^{3}/M,$ or
$\E(A_t) \gg   |A_t|^{3}/M$.
Applying Theorem \ref{t:E(A,B)} in the symmetric case we find
$B \subseteq A$ such that
\begin{equation}\label{tmp:05.06.2013_1}
    |B|
    \gg \frac{\k_4 |A|}{M^{1/3}}
        \gg
            \frac{|A|}{M^{4/3} K} \,,
\end{equation}
and
\begin{equation}\label{tmp:05.06.2013_2}
    \dim (B) \ll M  \log |A| \,.
\end{equation}

On the other hand, using Proposition \ref{pred:T_k}, we get a set $B'\sbeq A$ such that
$|B'| \gg M^{1/7} K^{-3/7} |A|$
and
$$\dim (B') \ll K M^{-1/3} \log |A|.$$
Combining the last inequalities with (\ref{tmp:05.06.2013_1}), (\ref{tmp:05.06.2013_2}),
we obtain the required result.
$\hfill\Box$
\end{proof}

\bigskip

Again, using Theorem \ref{t:shkredov-yekhanin'} instead of Theorem \ref{t:shkredov-yekhanin}
in the proof
one can estimate the dimension of the set $B$ in terms of the size of $B$.

\bigskip


The last result of this section shows that  small $\E_3(A)$ energy
implies that a large subset of $A$ has small dimension.

\begin{theorem}
    Let $A$ be a finite subset of an abelian group.
Suppose that $|A-A|\le K|A|$ and $\E_{3}(A)=M^{}|A|^{4}/K^{2}.$ Then
there exists $A_* \sbeq A$ such that $|A_*|\gg |A|/M^{1/2}$
 and
$$
    \dim (A_*) \ll M (\log |A| +  \log K \log M) \,.
$$
\end{theorem}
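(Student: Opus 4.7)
The plan is to combine Theorem \ref{p:E_k_and_T_k}, which controls $\T_l(A)$ from below in terms of $\E_3(A)$ and $|A-A|$, with Proposition \ref{pred:T_k}, which extracts a subset of controlled additive dimension from a lower bound on $\T_l(A)$.

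First I would apply Theorem \ref{p:E_k_and_T_k} with an integer parameter $l\ge 2$ to be chosen later. Combined with the hypotheses $\E_3(A)=M|A|^4/K^2$ and $|A-A|\le K|A|$, inequality (\ref{f:E_k_and_T_k_difference}) yields
$$
\T_l(A) \;\ge\; \frac{|A|^{8l}}{\big(8\E_3(A)\big)^l |A-A|^{2l+1}} \;\ge\; \frac{|A|^{2l-1}}{(8M)^l K} \;=:\; c\,|A|^{2l-1}.
$$
Proposition \ref{pred:T_k} (applied with $k=l$) then produces a subset $A_*\sbeq A$ with
$$
|A_*| \gg c^{1/(2l-1)}|A|, \qquad \dim(A_*) \ll c^{-1/(l-1)} \log(c^{-1}|A|).
$$

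It remains to choose $l$. The trivial estimate $\E_3(A)=\sum_x (A\c A)(x)^3 \le |A|\cdot \E(A)\le |A|^4$ gives $M\le K^2$, so $\log M\le 2\log K$. Taking $l=\lceil C_0\log K\rceil$ for a sufficiently large absolute constant $C_0$, the exponents $1/(l-1)$ and $1/(2l-1)$ are so small compared with $\log K$ that $(8M)^{1/(l-1)}$, $K^{1/(l-1)}$ and $(8M)^{1/(2(2l-1))}$ are all bounded. This yields $c^{-1/(l-1)}\ll M$ and $c^{1/(2l-1)}\gg M^{-1/2}$, which already forces $|A_*|\gg |A|/M^{1/2}$. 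Moreover
$$
\log(c^{-1}|A|) = l\log(8M) + \log K + \log|A| \ll \log K\log M + \log|A|
$$
(provided $M\ge 2$; the case $M=O(1)$ is handled by replacing $M$ with $\max(M,2)$), and plugging everything into the bound for $\dim(A_*)$ gives $\dim(A_*) \ll M(\log|A| + \log K\log M)$, as required.

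The only real subtlety is the balancing of $l$: it must be large enough that $c^{-1/(l-1)}$ descends to order $M$ (rather than some larger power of $M$), yet small enough that the factor $l\log(8M)$ appearing in $\log(c^{-1}|A|)$ does not exceed $\log K\log M$. The a priori bound $M\le K^2$ coming from $\E_3(A)\le |A|^4$ is precisely what allows the single choice $l\asymp \log K$ to work uniformly in $M$.
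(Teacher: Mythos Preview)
Your proposal is correct and follows exactly the same route as the paper: apply Theorem \ref{p:E_k_and_T_k} to get $\T_l(A)\ge |A|^{2l-1}/(K(8M)^l)$, then feed this into Proposition \ref{pred:T_k} with $l\sim\log K$. The paper's proof is only two sentences and omits the balancing details you carefully spell out (the use of $M\le K^2$ to make the single choice $l\asymp\log K$ work), but the argument is identical.
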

\begin{proof}
By   Theorem
\ref{p:E_k_and_T_k} for every $l\ge 2$  we have $\T_l (A) \ge
|A|^{2l-1}/(K (8M)^l)$. Applying Proposition \ref{pred:T_k}  with $l\sim \log K$ we obtain the result. $\hfill\Box$
\end{proof}

\section{An application}

Konyagin posed the following interesting problem. Is it true that there is a constant $c>0$ such that if $A\sbeq \F_p$ and $|A|\le \sqrt p$ then there exists
$x$ such that $0<(A*A)(x)\ll |A|^{1-c}.$ First nontrivial  results toward this conjecture were obtained in
\cite{LS}.
It was proved that there exists
$x$ such that $0<(A*A)(x)\ll e^{-O((\log \log |A|)^2)}|A|^{},$ provided that $|A|\le e^{c\log ^{1/5}p}.$ Our next result
improves the above estimate
as well as the condition on size of $A$.

\begin{theorem}\label{t:Konyagin-problem} Suppose that $A\sbeq \F_p$ and $|A|\le e^{c\sqrt {\log p}}$. Then there exists $x$ such that
$$0<(A*A)(x)\ll e^{-O(\log^{1/4} |A| )}|A|$$
for some absolute constant $c>0.$
\end{theorem}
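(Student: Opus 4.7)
The plan is to argue by contradiction. Assume that $(A \ast A)(x) > |A|/K$ for every $x \in A+A$, where $K := e^{c\log^{1/4}|A|}$ for a sufficiently small absolute constant $c>0$ to be chosen; then $|A+A| \le K|A|$, so $A$ has small doubling.

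The first step is to extract from $A$ a substructure of controlled additive dimension via Corollary~\ref{c:small-dim-sanders}, which combines the Croot--Sisask type result Theorem~\ref{t:sanders-X} with Rudin's inequality. Taking $k$ of order $\log K = c\log^{1/4}|A|$, we will obtain a translate $X \subseteq A-t$ with $|X| \ge e^{-O(k^2 \log^2 2K)}|A+A| \ge |A|^{1 - O(c^4)}$, with $\dim(X) \ll K^{4/k}\log|A| = O(\log|A|)$, and with the crucial containment $kX \subseteq 2A - 2A$. For $c$ small enough, $|X|$ will exceed $|A|^{1/2}$ while $\dim(X)$ remains of order $\log |A|$.

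Next, letting $\Lambda \subseteq X$ be a maximal dissociated subset, set $d := |\Lambda| = \dim(X)$. Rudin's inequality (Lemma~\ref{l:rudin}) supplies the lower bound $|m\Lambda| \ge (d/(C_1 m))^m$ for every positive integer $m$. On the other hand, iterating $kX \subseteq 2A-2A$ and using Pl\"unnecke-Ruzsa (Lemma~\ref{l:plunnecke-ruzsa}) gives $m\Lambda \subseteq mX \subseteq 2\lceil m/k\rceil A - 2\lceil m/k\rceil A$, hence $|m\Lambda| \le K^{4\lceil m/k\rceil}|A|$; and because $m\Lambda \subseteq \F_p$ we also have the ambient bound $|m\Lambda| \le p$. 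Combining these yields, for every $m$,
\[
\Bigl(\tfrac{d}{C_1 m}\Bigr)^m \;\le\; \min\bigl(K^{4\lceil m/k\rceil}|A|,\; p\bigr).
\]

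The contradiction will come from choosing $m$ so that the $\F_p$-bound dominates, which happens once $m \asymp \log(p/|A|)/\log K$; in that regime Rudin's bound pushed against $p$ forces $d \le C_2 m \cdot p^{1/m}$, optimized to $d = O(\log p)$ or sharper via the simultaneous Pl\"unnecke constraint. Combining the resulting upper bound on $d$ with the trivial lower bound $d \ge \log_3 |X| \gg \log|A|$, and plugging in the hypotheses $\log|A| \le c'\sqrt{\log p}$ and $\log K \le c\log^{1/4}|A|$, a careful computation in the parameters will yield, for $c$ and $c'$ sufficiently small absolute constants, two incompatible bounds on $d$. The main obstacle is precisely this final parameter balancing: $k$, $m$, and the ratio $m/k$ must be arranged so that the two competing upper bounds on $|m\Lambda|$ act in concert with Rudin's lower bound to push the dimension estimate strictly below $\log_3|X|$; once that configuration is in place, the explicit arithmetic with $K = e^{c\log^{1/4}|A|}$ and $|A|\le e^{c'\sqrt{\log p}}$ produces the required contradiction.
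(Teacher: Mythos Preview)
Your setup matches the paper: you write $K$ for $|A|/\min_x (A*A)(x)$, deduce $|A+A|\le K|A|$, and invoke Theorem~\ref{t:sanders-X}/Corollary~\ref{c:small-dim-sanders} with $k\asymp\log K$ to get $X\subseteq A-t$ with $|X|\ge e^{-O(\log^4 K)}|A|$ and $d:=\dim(X)\ll\log|A|$. The problem is your endgame. Rudin's lower bound $(d/(C_1 m))^m\le|m\Lambda|$ is only nontrivial when $m<d/C_1\ll\log|A|$. But the hypothesis $|A|\le e^{c\sqrt{\log p}}$ says $\log p\ge c^{-2}\log^2|A|$, so for any such $m$ one has $p^{1/m}\ge\exp(c^{-2}\log|A|)$, and the resulting bound $d\le C_1 m\,p^{1/m}$ is astronomically weaker than the $d\ll\log|A|$ you already possess. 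The Pl\"unnecke bound $|m\Lambda|\le K^{4\lceil m/k\rceil}|A|$, with $k\asymp\log K$, gives after logarithms $m\log(d/(C_1 m))\le O(m)+\log|A|$, which at $m\asymp\log|A|$ merely reproduces $d\ll\log|A|$. Neither estimate, nor any ``concert'' of the two, falls below the trivial lower bound $d\ge\log_3|X|\gg\log|A|$: both sides are $\asymp\log|A|$ with perfectly consistent constants. The parameter balancing you defer cannot be completed, because $|\Lambda|\ll\log|A|\ll\sqrt{\log p}$ is far too small for iterated sumsets of admissible length ever to saturate~$\F_p$.

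The paper exploits $\dim(X)\ll\log|A|$ differently. Simultaneous Dirichlet approximation on a generating set $\Lambda$ of $X$ yields $r\in\F_p^*$ with $\|r\lambda/p\|\le p^{-1/|\Lambda|}$ for all $\lambda\in\Lambda$, hence $\|rx/p\|\le|\Lambda|\,p^{-1/|\Lambda|}<1/(K|A|)$ for every $x\in X$; this last inequality is precisely where $|A|\le e^{c\sqrt{\log p}}$ enters. So after dilating by $r$, a half $X'$ of $X$ sits in an arc of length $<p/(K|A|)$. A pigeonhole in $r\cdot(A+A)$ then finds $Y\subseteq A$, $|Y|\ge|A|/K$, such that all sums in $X'+Y$ are distinct, giving $K|A|\ge|A+A|\ge|X'||Y|\ge e^{-O(\log^4 K)}|A|^2/K$ and hence $\log K\gg\log^{1/4}|A|$.
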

\begin{proof} Let us write $|A|/K=\min_{x\in A+A}(A*A)(x),$ then clearly
 $|A+A| \le K|A|$.
Let $X \subseteq A$ be a set given by Corollary \ref{c:small-dim-sanders}
for
$k=[\log K].$ Then $|X|\gg e^{-O({\log^4K})}|A|$
and $\dim(X) \ll \log |A|.$
 Suppose that $\Lambda$ satisfies $ |\Lambda|=\dim (X)$ and $X \sbeq \Span (\Lambda).$
By Dirichlet approximation theorem there exists $r\in \F_p^*$ such that
$$\|rt/p\|\le p^{-1/|\Lambda|} .$$ for every
$t\in \Lambda$ and therefore
\begin{eqnarray*}
\|rx/p\|&\le& |\Lambda| p^{-1/|\Lambda|}\ll (\log |A|)  p^{-O(1/\log |A|)}< \frac1{K|A|}
\end{eqnarray*}
for every $x\in X.$ We can assume that there is  a set $X'\sbeq X \sbeq A$ of size at least $|X|/2$ such that
for each $x\in X'$ we have $\{rx/p\}<1/(K|A|).$

 Notice  that for every $r\in \F_p^*$ there is a large gap in the set $r\cdot (A+A)$ i.e. there exists $s\in A+A$ such that
$$\{rs+1,\dots, rs+l\}\cap r\cdot(A+A)=\emptyset,$$
where $l=\frac{p-|A+A|}{|A+A|}\gg \frac{p}{K|A|}.$
Since $(A*A)(s)\ge |A|/K$ it follows that there are at least $|A|/K$ elements $a\in A$
such that
$$\{ra+1,\dots, ra+l\}\cap (r\cdot A)=\emptyset.$$ Denote the set of such $a$'s by $Y \subseteq A$.
Thus,
$$K|A|\ge |A+A|=|r\cdot A+r\cdot A|\ge |X'+Y|= |X'||Y|\ge e^{-O(\log^42K)}|A|^2,$$
so that $K\gg e^{O(\log^{1/4} |A| )}$, and the assertion follows.
$\hfill\Box$
\end{proof}

\bigskip

Bukh proved \cite{Bukh_dilates} that if $A\sbeq \Gr$ and $\l_i\in \Z\setminus \{0\}$ then
$$|\l_1\cdot A+\dots +\l_k\cdot A|\le K^{O(\sum_i\log(1+|\l_i|))}|A| \,,$$
where $K = |A\pm A|/|A|$.
We also prove here an estimate for sums of dilates.
It is not directly related with the additive dimension of sets but it is  another consequence of Theorem \ref{t:sanders-bogolubov}.

\begin{theorem}\label{t:Konyagin-problem} Let $A\sbeq \Gr$ is a finite set and $\l_i\in \Z\setminus \{0\}$. Suppose that $|A+A|\le K|A|$
then
$$|\l_1\cdot A+\dots +\l_k\cdot A|\le  e^{O((\log ^{8} K) (k+\log(\sum_i|\l_i|))}|A|.$$
\end{theorem}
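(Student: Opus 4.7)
The plan is to mirror the proof of Lemma \ref{l:new-green-ruzsa}: use Theorem \ref{t:sanders-bogolubov} to extract from $2A-2A$ a proper coset progression $P$ of small dimension, apply Chang's covering (Lemma \ref{l:chang-covering}) to write $A$ as a sumset built from $P-P$ and a few small symmetric pieces $S_j-S_j$, then dilate termwise and estimate each piece separately.

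Concretely, Theorem \ref{t:sanders-bogolubov} produces a proper symmetric coset progression $P \sbeq 2A-2A$ of dimension $d = O(\log^6 K)$ with $|P| \ge e^{-h(K)}|A|$, where $h(K) = O(\log^6 K \log\log K)$. Pl\"unnecke--Ruzsa (Lemma \ref{l:plunnecke-ruzsa}) gives $|A+P| \le |3A-2A| \le K^5 |A|$, and so $|A+P|/|P| \le L := K^5 e^{h(K)} = e^{O(\log^6 K \log\log K)}$. Chang's covering then produces sets $S_1,\dots,S_l$ of size at most $2K$ with $l \le \log(2KL) = O(\log^6 K \log\log K)$ such that
$$A \sbeq (P-P) + (S_1 - S_1) + \dots + (S_l - S_l).$$
Dilating by each $\lambda_i$ and Minkowski-summing over $i=1,\dots,k$ yields
$$\lambda_1\cdot A + \dots + \lambda_k\cdot A \sbeq \sum_{i=1}^{k}\lambda_i(P-P) + \sum_{j=1}^{l}\sum_{i=1}^{k}\lambda_i(S_j-S_j).$$

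Set $\Lambda := \sum_i|\lambda_i|$. Writing $P=Q+H$ with $H$ a subgroup and $Q$ a proper arithmetic progression of dimension $d$, integrality of the $\lambda_i$ gives $\lambda_i H \sbeq H$, and a direct range analysis on the generators of $Q$ gives $|\sum_i\lambda_i(P-P)|\le(2\Lambda)^d|P|$. For the $S_j$-piece I will use the crude bound $|\sum_i\lambda_i(S_j-S_j)|\le|S_j-S_j|^k\le(4K^2)^k$, so the whole double sum has at most $(4K^2)^{kl}$ elements. Combining these with $|P|\le|2A-2A|\le K^4|A|$ gives
$$|\lambda_1\cdot A + \dots + \lambda_k\cdot A|\le (2\Lambda)^d\cdot K^4\cdot(4K^2)^{kl}\cdot|A|,$$
whose logarithm is $O(d\log\Lambda)+O(\log K)+O(kl\log K) = O(\log^6 K\cdot\log\Lambda + k\log^7 K\log\log K)$, which is absorbed into $O(\log^8 K\cdot(k+\log\Lambda))$.

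The delicate point is the $S_j$-piece: the crude product bound $(4K^2)^k$ for $|\sum_i\lambda_i(S_j-S_j)|$ wastes a factor of $k$ in the exponent, and is only affordable because Chang's covering keeps $l$ at size $O(\log^6 K\log\log K)$, exactly small enough so that $kl\log K = O(k\log^8 K)$. A sharper approach via multiset counts $\binom{|S_j|+\Lambda-1}{\Lambda}$ for $|\sum_i\lambda_i S_j|$ actually introduces a factor of $K$ into the exponent and so is worse; the crude bound is what the target $e^{O(\log^8 K(k+\log\Lambda))}$ naturally permits.
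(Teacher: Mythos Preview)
Your argument is correct. The overall architecture---Sanders' Bogolyubov to find a low-dimensional coset progression $P\subseteq 2A-2A$, then a covering step to absorb $A$ into $P-P$ plus small junk, then bound dilates of the progression via its dimension and dilates of the junk by a crude product---is exactly the paper's.

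The one genuine difference is in the covering step. You follow the template of Lemma~\ref{l:new-green-ruzsa} and apply Chang's covering (Lemma~\ref{l:chang-covering}), producing $l=O(\log^6 K\log\log K)$ sets $S_j$ each of size at most $2K$, and then pay $(4K^2)^{kl}=e^{O(k\log^7 K\log\log K)}$ for the $S_j$-contribution. The paper instead uses the plain Ruzsa covering lemma to get a \emph{single} translate set $S$ with $|S|\le |A+P|/|P|=K^{O(\log^7 K)}$ and $A\subseteq S+(P-P)$, so that the junk contribution is simply $|S|^k=K^{O(k\log^7 K)}=e^{O(k\log^8 K)}$. Both routes land on the same exponent $O(\log^8 K\,(k+\log\Lambda))$; the paper's is marginally cleaner (one set rather than $l$ of them, no need to track $l$ through the computation), while yours has the virtue of recycling the exact mechanism already used earlier in the paper. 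Your handling of the $P$-piece via the coset-progression structure is the same as the paper's.
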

\begin{proof} From Sanders' Theorem \ref{t:sanders-bogolubov} it follows that  there is a
 $O(\log^7K)-$dimensional arithmetic progression $P$ of size $|P|\gg |A|/K^{O(\log^7K)}$ contained in $2A-2A.$
 By the well-known Ruzsa covering  lemma
 there is a set $S$ with $|S|\ll K^{O(\log^7K)}$ such that
 $$A\sbeq S+P-P.$$
Therefore,
\begin{eqnarray*}
|\l_1\cdot A+\dots +\l_k\cdot A|&\le & K^{O(k\log ^{7} K)}|\l_1\cdot (P-P)+\dots +\l_k\cdot (P-P)|\\
&\le & K^{O(k\log ^{7} K)}|(\sum_{\l_i>0}\l_i)(P-P)+(\sum_{\l_i<0}\l_i)(P-P)|\\
&\le & K^{O(k\log ^{7} K)} (|\l_1|+\dots +|\l_k|)^{\log^7K}|P-P|\\
&\le &  e^{O((\log ^{8}K)(k+ \log(\sum_i|\l_i|))}|A| \,,
\end{eqnarray*}
which completes the proof.
$\hfill\Box$
\end{proof}

\section{A result of Bateman and Katz}

In this section we reformulate some results from \cite{BK_AP3,BK_struct} in terms of additive dimension.
Although in  \cite{BK_AP3,BK_struct} the authors have deal with the case $\Gr=\f_p^n$,
where $p$ is a prime number, it is easy to see that their arguments work in more general groups.
We will follow their argument with some modifications.

Let $A\subseteq \Gr$  and $s$ be a positive integer.
A $2s$--tuple $(x_1,\dots,x_{2s}) \in A^{2s}$ is called the {\it additive $2s$--tuple}
if $x_1+\dots+x_{s} = x_{s+1}+\dots+x_{2s}$.
We say that an additive $2s$--tuple
$(x_1,\dots,x_{2s})$
is
{\it trivial}
if at least two variables are equal.
Otherwise we say that $2s$--tuples is {\it nontrivial}.
 Let $\T^*_s (A)$ denotes the number of nontrivial $2s$--tuples.
We will often use the following inequality   $\T_l (A)^{s-1} \le \T_{s} (A)^{l-1} |A|^{s-l}$, which holds for every
$s\ge l \ge 2$.

\begin{lemma}\label{l:T^*_lower_bound}
    Let $A \subseteq \Gr$ and $s\ge 4$.
    Suppose that $\T_s (A) \gg 10^s s^{2s} |A|^s$.
    Then $\T^*_s (A) \ge \frac12\T_s (A) $.
\end{lemma}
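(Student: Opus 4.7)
The plan is to bound the number of trivial $2s$-tuples from above by $\frac{1}{2}\T_s(A)$, so that $\T^*_s(A) = \T_s(A) - (\text{trivial count}) \ge \frac{1}{2}\T_s(A)$. I would enumerate the trivial tuples by a ``coinciding pair'': for each of the $\binom{2s}{2}\le 2s^2$ pairs $(i,j)$ with $1\le i<j\le 2s$, let $N_{i,j}$ count the additive $2s$-tuples with $x_i=x_j$. Since every trivial tuple has at least one coinciding pair, $\T_s(A)-\T^*_s(A)\le \sum_{i<j} N_{i,j}$.

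The key estimate is the uniform bound $N_{i,j}\le |A|\T_{s-1}(A)$. If $i$ and $j$ sit on opposite sides of the additive equation, one simply cancels them to obtain a free choice $x_i=x_j\in A$ together with an arbitrary additive $(s-1)$-tuple in the remaining $2s-2$ variables, yielding exactly $|A|\T_{s-1}(A)$. The same-side case is the only subtle point: writing $x_1=x_2=a$ the relation becomes $2a+x_3+\cdots+x_s=x_{s+1}+\cdots+x_{2s}$, and the doubling $2a$ blocks a direct combinatorial reduction. Expanding the Kronecker delta via Fourier inversion one gets
\[
N_{1,2} \;=\; \frac{1}{N}\sum_\xi \overline{\FF{A}(2\xi)}\;\overline{\FF{A}(\xi)}^{\,s-2}\,\FF{A}(\xi)^{\,s},
\]
and the trivial bound $|\FF{A}(2\xi)|\le |A|$ then gives $N_{1,2}\le \tfrac{|A|}{N}\sum_\xi |\FF{A}(\xi)|^{2s-2}=|A|\T_{s-1}(A)$, matching the opposite-side bound.

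Finally, H\"older's inequality applied to $\sum_\xi |\FF{A}(\xi)|^{2s-2}\cdot 1$ with conjugate exponents $s/(s-1)$ and $s$ yields $\T_{s-1}(A)\le \T_s(A)^{(s-1)/s}$. Combining, $\T_s(A)-\T^*_s(A)\le 2s^2|A|\,\T_s(A)^{(s-1)/s}$, and this is at most $\tfrac12\T_s(A)$ as soon as $\T_s(A)\ge (4s^2|A|)^s=4^s s^{2s}|A|^s$; the hypothesis $\T_s(A)\gg 10^s s^{2s}|A|^s$ supplies this with constant room to spare (and is the reason the rather generous constants $10^s$ and $s^{2s}$ appear in the statement). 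The only genuine obstacle is the same-side Fourier step where the doubled variable must be absorbed through the crude estimate $|\FF{A}(2\xi)|\le|A|$; everything else is bookkeeping.
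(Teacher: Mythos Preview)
Your argument is correct and is in fact cleaner than the paper's own proof. The paper follows Ruzsa's approach: it introduces the auxiliary convolution $(A\,\tilde *_s\,A)(x)$ counting representations $x=x_1+\cdots+x_s$ with \emph{pairwise distinct} $x_i$, separately bounds the defect $\sum_x(A\,\tilde*_s\,A)(x)^2-\T^*_s(A)$ by $s^2|A|\T_{s-1}(A)$ (opposite-side collisions), and controls $(A*_sA)(x)-(A\,\tilde*_s\,A)(x)$ via the function $q(x)$ that counts solutions of $x=2x_1+x_2+\cdots+x_{s-1}$; a Fourier bound on $\sum_x q(x)^2$ and the triangle inequality then give $\sum_x(A\,\tilde*_s\,A)(x)^2\ge\frac45\T_s(A)$, and one concludes. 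Your union bound over the $\binom{2s}{2}$ coinciding pairs bypasses this two-step decomposition entirely: the only non-trivial point, the same-side case, is dispatched by exactly the Fourier estimate $|\FF{A}(2\xi)|\le|A|$ (the paper uses the same device, less explicitly, inside its bound for $\sum_x q(x)^2$). Your final H\"older step $\T_{s-1}(A)\le\T_s(A)^{(s-1)/s}$ is just $L^p$-monotonicity on the dual group with its probability Haar measure, and yields the slightly sharper threshold $\T_s(A)\ge 4^s s^{2s}|A|^s$. The paper's route is longer but has the minor advantage of isolating the intermediate quantity $\sum_x(A\,\tilde*_s\,A)(x)^2$, which is sometimes useful in its own right.
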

\begin{proof} We proceed similarly like in the proof of Theorem 5.1 in \cite{Ruzsa-solving-I}. Let $(A\tilde *_sA)(x)$ denotes the number of representations $x=x_1+\dots+x_s$ in distinct $x_i\in A.$
 Observe that
$\sum _x(A\tilde *_sA)(x)^2$
equals $\T^*_s(A)$ plus the number of additive tuples $(x_1,\dots, x_{2s})$ such that for some $i\le s$ and $j>s$ we have $x_i=x_j.$ Hence,
\begin{equation}\label{f:distinct-sums-est}
\sum _x(A\tilde *_sA)(x)^2-\T^*_s(A)\le s^2|A|\sum _x(A\tilde *_{s-1}A)(x)^2\le s^2|A|\T_{s-1}(A).
\end{equation}
Notice that $(A *_sA)(x)-(A\tilde *_sA)(x)$ is the number of representations $x=x_1+\dots+x_s$, for which
$x_i=x_j$ for some $i<j.$ Thus, we have
\begin{equation}\label{f:distinct-sums-est-}
    (A *_sA)(x)-(A\tilde *_sA)(x)\le s^2q(x) \,,
\end{equation}
where  $q(x)$ is the number of solutions of $x=2x_1+\dots+x_{s-1}.$
By Fourier inversion
\begin{eqnarray}\label{f:q-est}
\sum_xq(x)^2&=& \int |\h A(2\a)|^2|\h A(\a)|^{2s-4} \,d\a
\le
|A|^2\T_{s-2}(A)\le |A|^{2+\frac{2}{s-1}}\T_{s}(A)^{\frac{s-3}{s-1}}\nonumber\\
&=&|A|^{2+\frac{2}{s-1}}\T_{s}(A)^{-\frac{2}{s-1}} \T_s(A)\le \frac1{100}s^{-4} \T_s(A) \,.
\end{eqnarray}
Therefore, by the triangle inequality
and inequalities (\ref{f:distinct-sums-est-}), (\ref{f:q-est}), we get
\begin{eqnarray}\label{tmp:10.11.2013_1}
\sum _x(A\tilde *_sA)(x)^2&\ge& \T_s(A)^{1/2}\Big (\T_s(A)^{1/2}-2s^2(\sum_x q(x)^2)^{1/2}\Big )\nonumber\\
&\ge& \T_s(A)^{1/2}\Big (\T_s(A)^{1/2}-\frac15\T_{s}(A)^{1/2}\Big)\nonumber\\
&\ge& \frac45\T_s(A).
\end{eqnarray}
Finally, using the assumption that $\T_s (A) \gg 10^s s^{2s} |A|^s$,
and bounds (\ref{f:distinct-sums-est}), (\ref{tmp:10.11.2013_1}), we obtain
$$\T^*_s(A)\ge \frac45\T_s(A)-s^2|A|\T_{s-1}(A)\ge \frac45\T_s(A)-s^2|A|^{\frac{s}{s-1}}\T_{s}(A)^{\frac{s-2}{s-1}}\ge
\frac12\T_s(A)$$
and the assertion follows.
$\hfill\Box$
\end{proof}
\bigskip

We will also use the following  simple  lemmas.

\begin{lemma}\label{l:tuples_l}
    Let  $A \subseteq \Gr$ be a finite set and let $s>0$ be an even integer.
    Suppose that $A$ contains
    a family of
    nontrivial $s$--tuples, involving
    at least
    $r s$ elements of $A$.
    Then $\dim (A) \le |A| - r$.
\end{lemma}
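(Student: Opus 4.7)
The approach will be to read each nontrivial additive $s$-tuple as a non-trivial $\{-1,0,1\}$-linear dependence over $A$, and then leverage the assumption that the family involves $rs$ elements — which I read as supplying $r$ pairwise disjoint such tuples — to produce $r$ distinct elements of $A$ that must be absent from any dissociated subset. Concretely, since $s$ is even and ``nontrivial'' (in the paper's sense) means all $s$ coordinates pairwise distinct, any such tuple $(x_1,\dots,x_s)$ satisfies
$$x_1+\dots+x_{s/2}-x_{s/2+1}-\dots-x_s=0,$$
a non-trivial $\{-1,0,1\}$-combination involving $s$ distinct elements of $A$, each with coefficient $\pm 1$.

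I would then fix the $r$ disjoint tuples $T_1,\dots,T_r$ extracted from the family (their entries contribute $rs$ distinct elements, matching the hypothesis). For any dissociated set $\Lambda\subseteq A$, each $T_i$ must contain at least one element $y_i\in A\setminus\Lambda$: otherwise $T_i\subseteq\Lambda$ would give a non-trivial $\{-1,0,1\}$-relation among distinct members of $\Lambda$, contradicting dissociation. Since the $T_i$ are pairwise disjoint, the chosen $y_1,\dots,y_r$ are pairwise distinct, so $|A\setminus\Lambda|\ge r$ and hence $|\Lambda|\le |A|-r$. Maximising over dissociated $\Lambda$ gives $\dim(A)\le |A|-r$, as required.

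There is no real obstacle here; the one detail to watch is that the relation read off from a nontrivial tuple is a \emph{genuine} $\{-1,0,1\}$-dependence, with all coefficients $\pm 1$ supported on distinct members of $\Lambda$. This is ensured precisely because ``nontrivial'' forces the $s$ entries of the tuple to be pairwise distinct, preventing any cancellation between a $+1$ and a $-1$ at a common element. Everything else follows immediately from the definition of additive dimension.
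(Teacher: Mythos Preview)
Your proof rests on reading ``involving at least $rs$ elements of $A$'' as ``the family contains $r$ pairwise \emph{disjoint} tuples.'' That reading is too strong, and it is not what the lemma asserts. In the paper the hypothesis means that the set $M\subseteq A$ of all elements appearing in \emph{some} tuple of the family satisfies $|M|\ge rs$; this is exactly how the lemma is invoked later in Proposition~\ref{p:finding_concentration}, where one argues from $\dim(\{x_1,\dots,x_d\})=d-l$ to the bound $|M|\le 2sl$. Under the correct reading it is \emph{not} possible in general to extract $r$ pairwise disjoint tuples: for instance, with $s=4$ take three tuples $\{a,b,c,d\}$, $\{a,e,f,g\}$, $\{a,h,i,j\}$ sharing the element $a$. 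They involve ten elements, so the hypothesis holds with $r=2$, yet no two of them are disjoint. Your pigeonhole step ``the $T_i$ are pairwise disjoint, so the chosen $y_i$ are pairwise distinct'' therefore breaks down, and with it the conclusion $|A\setminus\Lambda|\ge r$.

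The paper's argument replaces disjointness by a weaker but sufficient selection: by induction on $r$ one finds tuples $S_1,\dots,S_r$ in the family and elements $a_j\in S_j$ such that $a_j\notin S_i$ for $i\ne j$ (each selected tuple has a ``private'' witness). The inductive step uses averaging: since $|M|\ge rs$, some element of $M$ lies in at most $|\mathcal S|/r$ tuples; one removes those tuples, checks that at least $(r-1)s$ elements remain involved, and recurses. From the selected $a_j$'s one gets $A\subseteq\Span(A\setminus\{a_1,\dots,a_r\})$, since each $a_j$ is a $\pm1$--combination of the other (distinct) entries of $S_j$, all of which lie in $A\setminus\{a_1,\dots,a_r\}$ by the privacy condition. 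Your dissociation idea is fine once such a selection is in hand; what is missing is precisely this combinatorial extraction, which is the substantive content of the lemma.
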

\begin{proof}
Let $\mathcal{S}$ denotes the given family of $s$--tuples and
let $M \subseteq A$ be the set of all elements of $A$ involved
in some $s$--tuple of $\mathcal{S}$.
To proof the lemma
it is sufficient to show that there are $s$--tuples $S_1,\dots,S_r\in \mathcal{S}$
and elements $a_j \in S_j$, $j\in [r]$ such that each $a_j$ does not belong to  $S_i$, $i\neq j$.
Indeed, it is easy to see that $A\sbeq \Span (A\setminus\{a_1,\dots,a_r\}).$

We use induction on $r\ge 0$. The result is trivial for $r=0.$ Now assume that $r\ge 1.$
In view of the assumption $|M| \ge rs$ there is an element $a\in M$ belonging at most
$k:= s|\mathcal{S}|/|M| \le  |\mathcal{S}|/r$
tuples from $\mathcal{S}$.
Let $S_1, \dots, S_k$ be all  these tuples and put $\mathcal{S'}=\mathcal{S}\setminus\{S_1, \dots, S_k\}.$
One can suppose that the minimum of such $k$ is attained on the element $a\in M$.
Notice that $\mathcal{S}'$ involves at least $rs - s$ elements of $A$.
Indeed, otherwise  $|S_1\cup\dots \cup S_k|\ge s+1$ and each element of $S_1\cup\dots \cup S_k$ belongs to at least
$k$ sets from $\mathcal{S}$, so that it belongs to all sets $S_1,\dots,S_k.$
But this implies that $|S_1\cup\dots \cup S_k| \le ks/k =s$, which gives a  contradiction.
By induction assumption there are tuples  $S'_1,\dots,S'_r\in \mathcal{S'}$
and elements $a'_j \in S'_j$, $j\le r-1$ such that each $a'_j$ does not belong to  $S'_i$, $i\neq j$.
Hence the sets $S_1,S'_1,\dots,S'_r\in \mathcal{S}$ and the elements $a_1, a_1',\dots,a'_{r-1}$ posses the required property.
$\hfill\Box$
\end{proof}

\begin{lemma}
    Let $M \subseteq \Gr$ be a finite set and suppose that $M=X\cup D$, where $D$ is a dissociated set. Then there is an absolute constant $C>0$ such that
    $\T_{s}(M)\le C^ss^s|D|^s+2^{2s}|X|^{2s-1}.$
   \label{l:large-diss-energy}
\end{lemma}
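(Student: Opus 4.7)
The plan is to use Fourier inversion together with Rudin's inequality (Lemma \ref{l:rudin}) applied to the dissociated part, after first making the decomposition $M = X \cup D$ disjoint.

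First I would replace $D$ by $D' := D \setminus X$; this is still a dissociated set (a subset of a dissociated set), and now $M = X \sqcup D'$ is a disjoint union, so the characteristic functions add: $M(x) = X(x) + D'(x)$, and therefore $\FF{M}(\xi) = \FF{X}(\xi) + \FF{D'}(\xi)$. Using the Fourier expression $\T_s(M) = N^{-1}\sum_\xi |\FF{M}(\xi)|^{2s}$ together with the elementary convexity inequality $|a+b|^{2s} \le 2^{2s-1}(|a|^{2s}+|b|^{2s})$, I obtain
$$\T_s(M) \le 2^{2s-1}\bigl(\T_s(X) + \T_s(D')\bigr).$$

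The $X$-term is bounded trivially: once $x_1,\dots,x_{2s-1}\in X$ are chosen arbitrarily, $x_{2s}$ is determined by the equation defining an additive tuple, so $\T_s(X) \le |X|^{2s-1}$. For the dissociated term, I would apply Lemma \ref{l:rudin} to $D'$ with weights $a_n \equiv 1$ and exponent $p = 2s$, which gives
$$\T_s(D') \;=\; \frac{1}{N}\sum_\xi |\FF{D'}(\xi)|^{2s} \;\le\; (2Cs)^{s}\, |D'|^{s} \;\le\; (2Cs)^{s}\, |D|^{s}.$$
Substituting both bounds back yields $\T_s(M) \le 2^{2s}|X|^{2s-1} + (8C)^{s} s^{s} |D|^{s}$, and absorbing the factor $8$ into the constant $C$ gives the claimed inequality.

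There is no real obstacle here; the only point to be careful about is the initial reduction to a disjoint union, which is needed for the identity $\FF{M}=\FF{X}+\FF{D'}$ without any inclusion-exclusion error. Everything else is a one-line application of Rudin's inequality together with the trivial $|X|^{2s-1}$ bound.
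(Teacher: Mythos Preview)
Your proof is correct and follows essentially the same approach as the paper: split $M$ into the two pieces, pass to the Fourier side, use $|a+b|^{2s}\le 2^{2s-1}(|a|^{2s}+|b|^{2s})$, then bound $\T_s(X)\le |X|^{2s-1}$ trivially and $\T_s(D)$ by Rudin's inequality. Your explicit reduction to a disjoint union via $D':=D\setminus X$ is a good touch that the paper glosses over.
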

\begin{proof} By Rudin's inequality we have
$$\T_s(M)=\int |\h M(\a)|^{2s} \,d\a \le 2^{2s}\int |\h D(\a)|^{2s} \,d\a +2^{2s} \int |\h X(\a)|^{2s} \,d\a
    \le C^ss^s|D|^s+2^{2s}|X|^{2s-1}.
~~\hfill\Box$$
\end{proof}


\begin{proposition}
    Let $A\subseteq \Gr$ be a finite set such that $\T_k(A)\ge 10^k s^{2k}|A|^k,$ where $2\le k<s=\lfloor \log |A|\rfloor$. Furthermore, let $\sigma \ge 1$
and      $d$ be
  such that
    \begin{equation}\label{f:d_bound1}
         \frac{|A|^{1-\frac{s-k}{2s(k-1)}}\log^{3/2} |A|}{\T_k(A)^{\frac{s-1}{2s(k-1)}}}           \ll d
                \le \frac{|A|^{1/2}}{\sigma^{1/2}}.
    \end{equation}
    Then there is a
    set $A'\subseteq A$ such that
    $\dim (A') \le d$ and
    \begin{equation}\label{f:A_L_bound}
        |A'| \ge \sigma \dim (A') \,.
    \end{equation}
\label{p:finding_concentration}
\end{proposition}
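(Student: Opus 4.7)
The plan is to argue by contradiction: suppose that no $A'\subseteq A$ satisfies both $\dim(A')\le d$ and $|A'|\ge\sigma\dim(A')$, and derive a contradiction by showing that $\T_s(A)$ is simultaneously too large (by the hypothesis on $\T_k(A)$) and too small (by a Fourier analysis of $A$).

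\emph{Step 1: lower bound on $\T_s(A)$.} The H\"older-type inequality $\T_k(A)^{s-1}\le\T_s(A)^{k-1}|A|^{s-k}$ recalled just before the statement, together with the hypothesis $\T_k(A)\ge 10^k s^{2k}|A|^k$, gives
\begin{equation*}
    \T_s(A)\ge V:=\T_k(A)^{(s-1)/(k-1)}|A|^{-(s-k)/(k-1)}\ge 10^s s^{2s}|A|^s.
\end{equation*}
In particular, Lemma~\ref{l:T^*_lower_bound} applies and yields $\T^*_s(A)\ge\tfrac12\T_s(A)$, so the bulk of additive $s$-tuples in $A$ are nontrivial.

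\emph{Step 2: Bourgain--Sanders--Shkredov decomposition.} Apply Lemma~\ref{l:Bourgain_diss_sets} to $Q=A$ with dimension parameter $l=d$ and exponent $p\sim\log|A|$, yielding a partition $A=A^{str}\cup A^{diss}$ with $\dim(A^{str})<d$ and
\begin{equation*}
    \Big(\tfrac1N\sum_\xi|\widehat{A^{diss}}(\xi)|^p\Big)^{1/p}\ll\sqrt{p/d}\,|A|.
\end{equation*}
The candidate $A'=A^{str}$ has dimension $<d$, so by our contradictory hypothesis $|A^{str}|<\sigma\dim(A^{str})<\sigma d$.

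\emph{Step 3: upper bound on $\T_s(A)$.} By Minkowski's inequality in $L^{2s}(\widehat\Gr)$,
\begin{equation*}
    \T_s(A)^{1/(2s)}\le|A^{str}|^{(2s-1)/(2s)}+\T_s(A^{diss})^{1/(2s)}\le(\sigma d)^{(2s-1)/(2s)}+|A|\Big(\tfrac{\log|A|}{d}\Big)^{\alpha},
\end{equation*}
where the trivial bound $\T_s(B)\le|B|^{2s-1}$ handles the first term and H\"older interpolation between $\|\widehat{A^{diss}}\|_\infty\le|A|$ and the $L^p$ estimate of Step~2 handles the second; the exponent $\alpha>0$ is chosen so that the resulting logarithmic factor is the $\log^{3/2}|A|$ appearing in the proposition. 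The upper bound $d\le|A|^{1/2}/\sigma^{1/2}$ makes $(\sigma d)^{(2s-1)/(2s)}$ dominated by $V^{1/(2s)}/2$, while the hypothesized lower bound on $d$ is calibrated so that the Fourier term is also strictly smaller than $V^{1/(2s)}/2$, producing $\T_s(A)<V\le\T_s(A)$, a contradiction.

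\emph{Main obstacle.} A direct single-step application of Lemma~\ref{l:Bourgain_diss_sets} with $p=2s$ only contradicts the lower bound on $d$ when $d\gg|A|^2/\T_s(A)^{1/s}$, which is essentially the square of the bound actually stated. I expect the proof to close this gap either by iterating the decomposition with parameters $l_j$ adjusted along $j$ (mimicking the iterative scheme in the proof of Theorem~\ref{t:shkredov-yekhanin'}), or by exploiting the finer structure from Lemma~\ref{l:Bourgain_diss_sets} that $A^{diss}$ is a \emph{disjoint} union of dissociated sets of size exactly $d$, permitting a refined Rudin-type $L^{2s}$ estimate. Balancing the bookkeeping with the optimal choice $p\asymp\log|A|$ is the delicate technical point and should account for the exact power $\log^{3/2}|A|$ in the hypothesis on $d$.
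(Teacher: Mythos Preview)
Your proposal has a genuine gap, and you have correctly located it yourself in your ``Main obstacle'' paragraph: the Bourgain--Sanders--Shkredov decomposition (Lemma~\ref{l:Bourgain_diss_sets}) applied with $l=d$ and $p=2s$ gives $\T_s(A^{diss})^{1/(2s)}\ll\sqrt{s/d}\,|A|$, so the contradiction in Step~3 requires $d\gg s\,|A|^{2}/V^{1/s}$, which is of order $d_{\min}^{2}/\log^{2}|A|$ rather than $d_{\min}$. Neither of your suggested fixes works: the iteration in Theorem~\ref{t:shkredov-yekhanin'} refines the dependence on $|B_*|/|B|$ but does not lower the exponent of $K$, and the fact that $A^{diss}$ is a disjoint union of size-$d$ dissociated sets already underlies the $\sqrt{p/l}$ bound in Lemma~\ref{l:Bourgain_diss_sets}, so there is nothing further to extract from it. The square-root saving in $d$ that the proposition claims is genuinely out of reach of the Fourier-analytic decomposition; this is exactly the gap between Proposition~\ref{p:T_k} (which your method essentially reproves) and Corollary~\ref{t:better_T_k}.

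The paper's proof uses a completely different mechanism, following Bateman--Katz: one samples a random $d$-tuple $(x_1,\dots,x_d)\in A^d$ and compares two computations of the expected number of nontrivial additive $2s$-tuples among $\{x_1,\dots,x_d\}$. On one hand this expectation equals $\T_s^*(A)(d/|A|)^{2s}$, which is large by Step~1 and Lemma~\ref{l:T^*_lower_bound}. On the other hand, the contradictory hypothesis (every $A'\subseteq A$ with $\dim(A')\le d$ has $|A'|<\sigma\dim(A')$) is used to show that with high probability the random sample is nearly dissociated: $\mathbb{P}(\dim\{x_1,\dots,x_d\}\le d-l)=O(l)^{-l}$. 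Lemma~\ref{l:tuples_l} then bounds the number of elements participating in any nontrivial $2s$-tuple by $2sl$, and Lemma~\ref{l:large-diss-energy} converts this into $\T_s^*(\{x_1,\dots,x_d\})\le C^s(s^{2s}l^s+l^{2s-1})$; summing against $O(l)^{-l}$ gives an expected count of order $C^s s^{3s}$, contradicting the lower bound. The key point is that the random-sampling argument compares $\T_s^*(A)$ to $(|A|/d)^{2s}$ rather than $(|A|^2/d)^{s}$, which is where the square-root gain over the Fourier method appears.
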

\begin{proof}
Suppose that for all
sets
$A'\sbeq A$ such that  $\dim A' = m \le d$ we have $|A'| < m \sigma$.
We choose $d$ elements from $A$ uniformly and random.
We show that
\begin{equation}\label{f:nullity_l}
    \mathbb{P} \big ( \,\dim(\{x_1,\dots, x_d\})\le d-l\,\big)
        = O(l)^{-l}  \,.
\end{equation}
Indeed, suppose that we have chosen $x_1,\dots,x_m$ for some $m \le d.$
Put
$$
    A' := \Span(W) \cap A\,,
$$
where $W$ is a maximal dissociated subset of $\{x_1,\dots, x_m\}$.
Clearly, $|W| \le m$ and hence
$\dim(A')\le m$.
By our assumption $d \le \frac{|A|^{1/2}}{\sigma^{1/2}}$
and therefore the probability that $x_{m+1}$
belongs to $A'$ is at most $|A'|/|A|\le m\sigma /|A| \le d\sigma /|A| \le 1/d$. Observe that if $\dim(\{x_1,\dots, x_d\})\le d-l$
then there are at least $l$ elements $x_{i+1}$ such that
$x_{i+1}\in \Span(W_i) \cap A$, where $W_i$ is a maximal dissociated subset of $\{x_1,\dots, x_i\}$.
Thus, the required probability is bounded from above by
$$
    \sum_{j=l}^{d} \binom{d}{j} \frac{1}{d^j} \le \sum_{j=l}^{\infty} \left( \frac{ed}{j} \right)^j \frac{1}{d^j}
         = O(l)^{-l}
$$
and (\ref{f:nullity_l}) is proved.

Next, suppose that the  tuple $(x_1,\dots,x_d) \in A^d$ has dimension $d-l$.
Let  $M$ be the set that consists  of all elements of $\{x_1,\dots,x_d\}$,
which are involved in some nontrivial $2s-$tuple.
Then by Lemma \ref{l:tuples_l},
$|M|\le 2sl.$
Since
$M$
contains $|M|-l$ element dissociated subset it follows
by Lemma \ref{l:large-diss-energy} that $\T^*_{s}(M)\le \T_{s}(M)\le C^ss^{s}(2sl)^s+2^{2s}l^{2s-1}.$
Therefore, the expected number of nontrivial $2s-$tuples in $(x_1,\dots,x_d)$ is bounded from above by
\begin{equation}\label{f:number_of_2s-tuples}
C_1^s\sum_{l=0}^d(s^{2s}l^s+l^{2s-1})O(l)^{-l}\le C_2^s s^{3s} \,,
\end{equation}
where $C_1, C_2 >0$ are absolute constants.

On the other hand the expected  number of nontrivial
$2s-$tuples in $(x_1,\dots,x_d)$
equals $ \T^*_s (A)(d/|A|)^{2s}$ and by Lemma \ref{l:T^*_lower_bound} we have
$$
    \T^*_s (A) \left( \frac{d}{|A|} \right)^{2s}
        \ge
            \frac12 \T_s (A) \left( \frac{d}{|A|} \right)^{2s}
                \ge
                    \frac{\T_k(A)^{\frac{s-1}{k-1}}}{2|A|^{\frac{s-k}{k-1}}} \left( \frac{d}{|A|} \right)^{2s} \,.
$$
Comparing the last estimate with (\ref{f:number_of_2s-tuples}) (recalling that $s=\lfloor \log |A|\rfloor$),
we obtain a contradiction.
This completes the proof.
$\hfill\Box$
\end{proof}

\bigskip

Finally let us formulate the Bateman--Katz thorem for general abelian group $\Gr$.

\begin{corollary}\label{t:better_T_k}
    Let $A\subseteq \Gr$ be a finite set and let $k$ be a fixed integer, $2\le k<\lfloor \log |A|\rfloor$.
    Suppose that $\T_k (A) = c |A|^{2k-1} \ge 10^k  |A|^k \log^{2k} |A|$.
    Then there is a set $A' \subseteq A$ such that
    \begin{equation}\label{f:better_T_k_size}
        |A'| \gg
                \frac{c^{\frac{1}{k-1}} |A|}{\log^3 |A|}
            \cdot
                \log (c^{\frac{1}{k-1}}  |A|) \,,
    \end{equation}
    and
    \begin{equation}\label{f:better_T_k_dim}
        \dim (A') \ll_k c^{-\frac{1}{2(k-1)}} \cdot \log^{3/2} |A|
            \,.
    \end{equation}
\end{corollary}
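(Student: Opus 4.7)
The plan is to derive the corollary directly from Proposition \ref{p:finding_concentration} by selecting the parameters $s$, $d$, $\sigma$ optimally, and then bootstrapping the conclusion $|A'| \ge \sigma \dim(A')$ with the trivial inequality $|A'| \le 3^{\dim(A')}$ to extract the missing logarithmic factor in the size bound.

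First I would take $s = \lfloor \log|A|\rfloor$, so that the proposition's hypothesis $\T_k(A) \ge 10^k s^{2k}|A|^k$ matches the assumption of the corollary. Substituting $\T_k(A) = c|A|^{2k-1}$, the lower bound on $d$ required by the proposition simplifies to
\[ c^{-\frac{s-1}{2s(k-1)}} |A|^{\frac{3k-1-2s}{2s(k-1)}} \log^{3/2}|A|. \]
Since $|A|^{1/s}$ is bounded by an absolute constant when $s = \lfloor \log|A|\rfloor$, both exponents equal the limits $-1/(2(k-1))$ and $-1/(k-1)$ up to multiplicative $k$-dependent factors. Hence the choice
\[ d := C_k\, c^{-1/(2(k-1))} \log^{3/2}|A| \]
exceeds the lower bound by a comfortable factor of $|A|^{1/(k-1)}$, and this already yields the dimension bound (\ref{f:better_T_k_dim}) upon invoking the proposition. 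I would then pick $\sigma$ as large as possible subject to $d \le |A|^{1/2}/\sigma^{1/2}$, namely
\[ \sigma := \frac{c^{1/(k-1)}|A|}{C'_k \log^3|A|}; \]
the hypothesis $c|A|^{k-1} \ge 10^k \log^{2k}|A|$ ensures $c^{1/(k-1)}|A| \gg \log^{2}|A|$ and hence $\sigma \ge 1$ in the non-trivial regime where the claim has content. Proposition \ref{p:finding_concentration} then delivers $A' \subseteq A$ with $\dim(A') \le d$ and $|A'| \ge \sigma\dim(A')$.

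To recover the extra $\log(c^{1/(k-1)}|A|)$ factor in (\ref{f:better_T_k_size}), I would note that any maximal dissociated $\Lambda \subseteq A'$ satisfies $A' \subseteq \Span(\Lambda)$ and $|\Span(\Lambda)| \le 3^{|\Lambda|}$, giving $|A'| \le 3^{\dim(A')}$. Combined with $|A'| \ge \sigma\dim(A')$, this forces $\sigma\dim(A') \le 3^{\dim(A')}$ and hence $\dim(A') \gg \log \sigma$, from which $|A'| \ge \sigma\dim(A') \gg \sigma\log\sigma$. Since $\log \sigma = \log(c^{1/(k-1)}|A|) - O(\log\log|A|)$ and the main term dominates whenever the target bound is non-trivial, this yields (\ref{f:better_T_k_size}). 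The main obstacle will be the exponent bookkeeping verifying that the chosen $d$ falls inside the admissible range of the proposition (in particular that the finite-$s$ corrections only contribute $k$-dependent constants); everything else is either a parameter selection or the elementary span inequality.
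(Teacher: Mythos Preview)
Your proposal is correct and follows essentially the same route as the paper: set $s=\lfloor\log|A|\rfloor$, take $d\asymp_k c^{-1/(2(k-1))}\log^{3/2}|A|$ and $\sigma\asymp |A|/d^2$, and invoke Proposition~\ref{p:finding_concentration}. Your treatment is in fact slightly more explicit than the paper's, which simply writes $|A'|\ge\sigma\dim(A')\gg\sigma\log\sigma$ without justification; your span inequality $|A'|\le 3^{\dim(A')}$ is exactly the missing step that forces $\dim(A')\gg\log\sigma$.
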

\begin{proof}
As in Proposition \ref{p:finding_concentration} put $s=\lfloor \log |A|\rfloor$.
In view of  $k< s$, we have $\T_s (A) \gg 10^{s} s^{2s} |A|^s$ and
$\T_s (A) \ge c^{\frac{s-1}{k-1}}|A|^{2s-1}$.
We apply Proposition \ref{p:finding_concentration} with
$$
    d \sim |A| \T^{-1/2s}_s (A) \log^{3/2} |A| \ll c^{-1/2(k-1)} \log^{3/2} |A|
        ~~ \mbox{   and   } ~~
    \sigma \sim |A| d^{-2} \,.
$$
Then the conditions (\ref{f:d_bound1}) are satisfied.
Thus, there exists  a set $A' \subseteq A$ of dimension at most $d$
such that
$$
    |A'|
            \ge  \sigma\dim(A')
        \gg \sigma \log \sigma \gg
        \frac{c^{\frac{1}{k-1}} |A|}{\log^3 |A|}
            \cdot
                \log (c^{\frac{1}{k-1}}  |A|) \,.
$$
This completes the proof.
$\hfill\Box$
\end{proof}



\section{Further remarks}

\bigskip

We finish the paper with some remarks on other possible variants of additive dimension, which we considered here. For any $A\subseteq \Gr$ put
$$
    d(A) = \min \{ |S| ~:~ S\subseteq A,\, A \subseteq \Span S \}\,,
        \quad \quad
            d_*(A) = \min \{ |S| ~:~ A \subseteq \Span S \} \,,
$$
where for $S=\{ s_1,\dots, s_l \}$ we define
$$
    \Span (S) := \Big\{ \sum_{j=1}^{l} \eps_j s_j ~:~ \eps_j \in \{ 0,-1,1 \} \Big\} \,.
$$

\begin{example}
    Let $x\not =y$ be integers and
    let $A_1 = \{ x,y,x+y,2x+y \}$, $A_2 = \{ y,x+y,2x+y \}$.
    Clearly, $A_2 \subsetneq A_1$ and
     $\dim (A_1) = 3$, $d(A) = d_* (A_1) = 2$,
    $\dim (A_2) = d (A_2) = 3$, $d_* (A_2) = 2$,
    Thus every kind of dimension can  differ from another one.
    Note also that $d(A_2) > d(A_1)$,  but $A_2 \subsetneq A_1$.
\label{ex:dimensions}
\end{example}

Observe that
$$
    d_* (A) \le d(A) \le \t{d} (A) \le \dim(A) \,.
$$
On the other hand
$$
    \dim(A) \ll d_* (A) \log d_* (A) \le d (A) \log d (A)
$$
(see  \cite{LY_diss}).
Indeed,  let $\Lambda \subseteq A$ be a maximal dissociated subset of $A$, $|\Lambda| = \dim (A)$ and let $|S| = d_*(A)$.
There are $2^{|\Lambda|}$ different subset sums of $\Lambda$
and any element of $A$ and hence any element of $\Lambda$ belongs to $\Span S$, so that
\begin{equation*}\label{f:Lev}
    2^{|\Lambda|} \le (2|\Lambda| + 1)^{|S|}
\end{equation*}
and the result follows.

Each of the dimensions has useful properties: $\dim (A)$, $d_* (A)$ are monotone (but $d (A)$ is not as Example \ref{ex:dimensions} shows), furthermore  all dimensions are subadditive
$$
    \dim ( C_1 \cup \dots \cup C_n) \le \sum_{j=1}^n \dim (C_j)
$$
and the same holds for $d_* (A)$, $d (A)$
and
the dimension $d(A)$ is "subadditive" in the following sense
$$
    d(C_1+ \dots +C_n) \le \sum_{j=1}^n d (C_j)
$$
for any disjoint sets $C_j$.
There are another dimensions, e.g.
$$
    \t{d} (A) = \min \{ |\Lambda| ~:~ \Lambda \subseteq A \,, \quad \Lambda \mbox{ is maximal (by inclusion) dissociated subset of } A \} \,,
$$
Clearly, $d(A) \le \t{d} (A) \le \dim (A)$ and $\t{d}$ is monotone because of $3=\t{d}(A_2) > \t{d}(A_1)=2$.

{}

\bigskip

\no{Faculty of Mathematics and Computer Science,\\ Adam Mickiewicz
University,\\ Umul\-towska 87, 61-614 Pozna\'n, Poland\\} {\tt
schoen@amu.edu.pl}

\bigskip

\no{Division of Algebra and Number Theory,\\ Steklov Mathematical
Institute,\\
ul. Gubkina, 8, Moscow, Russia, 119991\\}
and
\\
Delone Laboratory of Discrete and Computational Geometry,\\
Yaroslavl State University,\\
Sovetskaya str. 14, Yaroslavl, Russia, 150000\\
and
\\
IITP RAS,  \\
Bolshoy Karetny per. 19, Moscow, Russia, 127994\\
{\tt ilya.shkredov@gmail.com}

\end{document}